\theoremstyle{thm} \newtheorem{thm}{Theorem}
\newtheorem{prop}{Proposition} [section]
\newtheorem{lem}[prop]{Lemma}
\newtheorem{corol}[prop]{Corollary}
\theoremstyle{definition} 
\theoremstyle{remark} 
\theoremstyle{remark} \newtheorem{rem}[prop]{Remark}
\theoremstyle{definition} \newtheorem{defi}[prop]{Definition}
\newcommand{\quotient}[2]{{\left.\raisebox{.2em}{$#1$}\middle/\raisebox{-.2em}{$#2$}\right.}}
\newcommand{\lcm} {\mathrm{lcm}}
\title{Jet differentials on toroidal compactifications of ball quotients}
\author{Beno\^it Cadorel}
\date{}
\let\OLDthebibliography\thebibliography
\renewcommand\thebibliography[1]{
  \OLDthebibliography{#1}
  \setlength{\parskip}{0pt}
  \setlength{\itemsep}{0.13em plus 0.3ex}
}
\begin{document}

\maketitle

\begin{abstract} We give explicit estimates for the volume of the Green-Griffiths jet differentials of any order on a toroidal compactification of a ball quotient. To this end, we first determine the growth of the logarithmic Green-Griffiths jet differentials on these objects, using a natural deformation of the logarithmic jet space of a given order, to a suitable weighted projective bundle. Then, we estimate the growth of the vanishing conditions that a logarithmic jet differential must satisfy over the boundary to be a standard one.
\end{abstract}

\section{Introduction}

When dealing with complex hyperbolicity problems, finding global jet differentials on manifolds is an important question, since they permit to give restrictions on the geometry of the entire curves. Let us recall a few basic facts concerning Green-Griffiths jet differentials, which can be found in \cite{dem12a}. Let $X$ be a complex projective manifold. Then, for any $k$, there exists a (singular) variety $X_k^{GG} \overset{\pi_k}{\longrightarrow} X$, and an orbifold line bundle $\mathcal O_{X_k^{GG}}(1)$ on it, such that for any $m$, $E_{k, m}^{GG} \Omega_X = (\pi_k)_{\ast} \mathcal O_k(m)$ is a vector bundle whose sections are holomorphic differential equations of order $k$, and degree $m$, for some suitable notion of weighted degree.
 
In \cite{dem11}, Demailly proves that if $V$ is a complex projective manifold of general type, then for any $k$ large enough, the Green-Griffiths jet differentials of order $k$ will have maximal growth, or equivalently, $\mathcal O_{X_k^{GG}}(1)$ is big. Finding an effective $k$ for which this property holds is an interesting question, whose answer depends on the context: in \cite{dem11}, Demailly uses his metric techniques to give an effective lower bound on $k$ in the case of hypersurfaces of $\mathbb P^n$.

We propose here a method to obtain a similar effective result in the case of \emph{toroidal compactifications of ball quotients} (see \cite{mok12} for the main properties of these manifolds). Specifically, we will find a combinatorial lower bound on the volume of $E_{k, \bullet}^{GG} \Omega_X$, \emph{valid for any $k$} :
\begin{thm} \label{thmvol} Let $\overline{X}$ be a toroidal compactification of a ball quotient by a lattice with only unipotent parabolic isometries. Then, for any $k \in \mathbb N$, we have the following lower bound on the volume of the $k$-th order Green-Griffiths jet differentials:
\begin{align} \nonumber 
\mathrm{vol} (E^{GG}_{k,\bullet} \Omega_{\overline{X}} ) \geq \frac{1}{(k!)^n} & \left[ \; \left( \frac{(K_{\overline{X}} + D)^n}{(n+1)^n} \sum_{\left\{u_1 \leq ... \leq u_n \right\} \subset S_{k,n}} \frac{1}{u_1 ... u_n} \right) \right. \label{minorationfinale} \\
&  \left. + (-D)^n \sum_{1 \leq i_1 \leq ... \leq i_n \leq k} \frac{1}{i_1 ... i_n} \right],
\end{align}
where $S_{k,n}$ is the ordered set 
$$
S_{k,n} = \left\{ 1_1< ...< 1_{n+1} < 2_1 < ... < 2_{n+1} < ...  < k_1< ...< k_{n+1} \right\}.
$$
\end{thm}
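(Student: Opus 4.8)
The plan is to deduce Theorem~\ref{thmvol} from a comparison between the standard jet differentials and their logarithmic counterparts, the defect being governed by a boundary contribution. Write $D$ for the boundary divisor of $\overline{X}$. Since $\Omega_{\overline{X}} \hookrightarrow \Omega_{\overline{X}}(\log D)$, functoriality of the Green--Griffiths construction yields an inclusion $E^{GG}_{k,m}\Omega_{\overline{X}} \hookrightarrow E^{GG}_{k,m}\Omega_{\overline{X}}(\log D)$, under which a standard jet differential is precisely a logarithmic one whose polar part along $D$ vanishes to the order prescribed by the weighted filtration. Consequently
\[ h^0\big(\overline{X}, E^{GG}_{k,m}\Omega_{\overline{X}}\big) \ \geq\ h^0\big(\overline{X}, E^{GG}_{k,m}\Omega_{\overline{X}}(\log D)\big) - N_{k,m}, \]
where $N_{k,m}$ is the number of vanishing conditions imposed along $D$. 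Passing to the $\limsup$ that defines the volume (normalized by $m^{\,n+nk-1}/(n+nk-1)!$, the weighted jet space having dimension $n+nk-1$) reduces the theorem to two independent estimates: a lower bound for the volume of the logarithmic jet differentials matching the first bracketed term, and an upper bound for the growth of $N_{k,m}$ matching the magnitude of the second.

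For the logarithmic volume I would exploit the filtration of $E^{GG}_{k,m}\Omega_{\overline{X}}(\log D)$ whose associated graded exhibits a flat degeneration of the logarithmic jet space to the weighted projective bundle $\mathbb{P}_w = \mathbb{P}\big(\bigoplus_{j=1}^{k}\Omega_{\overline{X}}(\log D)\big)$, the $j$-th summand carrying weight $j$. The top self-intersection of the tautological class is preserved under this degeneration, so I evaluate it on $\mathbb{P}_w$ by fiber integration: the weighted Segre formula converts the integral over the fibers into a sum over choices of Chern roots, each weight-$j$ direction contributing a factor $\tfrac1j$ — this is the source of the sum $\sum_{\{u_1\leq\cdots\leq u_n\}\subset S_{k,n}} \tfrac{1}{u_1\cdots u_n}$ and of the prefactor $\tfrac{1}{(k!)^n}$ coming from the product of all weights. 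Here the complex-hyperbolic uniformization enters decisively: the constant holomorphic sectional curvature of the Bergman metric, together with the rank-$(n+1)$ Higgs structure $\Omega_{\overline{X}}(\log D)\cong(\mathcal{E}^{1,0})^\ast\otimes\mathcal{E}^{0,1}$, forces the relevant Chern roots to be numerically $c_1(\mathcal{E}^{0,1})=\tfrac{1}{n+1}(K_{\overline{X}}+D)$ and accounts for the $(n+1)$-fold repetition of each integer in $S_{k,n}$; the base integral then produces $\tfrac{(K_{\overline{X}}+D)^n}{(n+1)^n}$. Inserting this intersection number into the holomorphic Morse inequalities on the jet space provides the required lower bound for the logarithmic volume, rather than a mere equality.

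The delicate step — and the one I expect to be the main obstacle — is the boundary estimate for $N_{k,m}$. Under the hypothesis that the parabolic isometries are unipotent, each boundary component is an étale quotient of an abelian variety carrying a negative normal bundle, and in adapted coordinates with $D = \{z_n = 0\}$ the logarithmic generator is $\tfrac{dz_n}{z_n}$. A logarithmic jet differential is standard exactly when, after clearing poles, its coefficients vanish to the order dictated by each prolongation of $\tfrac{dz_n}{z_n}$ under the jet operators; organizing this count by weighted degree and integrating over $D$ turns $N_{k,m}$ into a fiber integral over the jet directions tangent to $D$. This produces the unrestricted simplex sum $\sum_{1\leq i_1\leq\cdots\leq i_n\leq k}\tfrac1{i_1\cdots i_n}$ paired against the self-intersection of the normal bundle, i.e. against $(-D)^n$, which is negative since $-D$ restricts to an ample class on each boundary component. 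The two difficulties are to pin down exactly the vanishing order contributed by each prolongation of the pole, and to show that the resulting count overestimates $N_{k,m}$ only by terms of lower order in $m$, so that it furnishes a legitimate upper bound.

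Finally I would assemble the pieces. Combining the lower bound on the logarithmic volume with the upper bound $\limsup_m \tfrac{(n+nk-1)!\,N_{k,m}}{m^{\,n+nk-1}} \leq -\tfrac{1}{(k!)^n}(-D)^n\sum_{1\leq i_1\leq\cdots\leq i_n\leq k}\tfrac1{i_1\cdots i_n}$ gives exactly the inequality of Theorem~\ref{thmvol}; the main bookkeeping to watch is that both combinatorial sums appear against the same power of $m$, so that the two contributions are measured consistently as intersection numbers on $\overline{X}$.
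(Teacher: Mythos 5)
Your overall architecture coincides with the paper's: the exact sequence $0 \to E^{GG}_{k,m}\Omega_{\overline{X}} \to E^{GG}_{k,m}\Omega_{\overline{X}}(\log D) \to \mathcal Q_{k,m} \to 0$, the Rees degeneration of the logarithmic jet space to ${\rm P}\left(T_{\overline{X}}(-\log D)^{(1)} \oplus \dots \oplus T_{\overline{X}}(-\log D)^{(k)}\right)$ with the weighted Segre computation and proportionality giving the $S_{k,n}$-sum, and a boundary count matching $h^0(\mathcal Q_{k,m})$. However, there are two genuine gaps. First, for the logarithmic volume you assert that ``inserting this intersection number into the holomorphic Morse inequalities provides the required lower bound.'' That step does not go through as stated: for a general line bundle, the top self-intersection $c_1(\mathcal O(1))^{n+nk-1}$ is \emph{not} a lower bound for the volume (the Morse integral is taken only over the locus where the curvature has at most one negative eigenvalue, and without positivity this can undercut the full intersection number). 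The paper's argument requires the nefness of $\Omega_{\overline{X}}(\log D)$ on toroidal compactifications (a nontrivial input from prior work), propagated to nefness of $\mathcal O_{\overline{X}^{GG,\log}_k}(1)$ via the positivity theory for weighted projective bundles and semicontinuity along the Rees family; this is what turns $h^0$ into $\chi + O(m^{n+nk-2})$ and yields the (in fact exact) formula \eqref{estimeevolcomb}. Your sketch never invokes this nefness, and without it the lower bound is unjustified.

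Second, the boundary estimate is precisely where you say you ``expect the main obstacle'' to lie, and you leave it unresolved. The paper's resolution is concrete: one shows the inclusion of \eqref{suiteexacte} is filtered, then refines the induced filtration on $\mathcal Q_{k,m}$ in several steps (using the lattice action on $dz_n/z_n$ in the tubular neighborhoods of the boundary components, the filtration of $\mathcal O_{jD}$ by powers of $\mathcal I_D$, and the splitting $\Omega_{\overline{X}}|_D = N^\ast_{D/\overline{X}} \oplus \Omega_D$) until every graded piece is a locally free $\mathcal O_D$-module of the form $\left(N^\ast_{D/\overline{X}}\right)^{\otimes s} \otimes S^{l_1-j_1}\Omega_D \otimes \dots \otimes S^{l_k-j_k}\Omega_D$. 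Only then do Kodaira vanishing and Riemann--Roch on the abelian boundary components, combined with the combinatorial Proposition \ref{propcombmajorationapplication}, produce the coefficient $\frac{-(-D)^n}{(k!)^n}\sum_{1\le i_1\le\dots\le i_n\le k}\frac{1}{i_1\cdots i_n}$ of $\frac{m^{n+nk-1}}{(n+nk-1)!}$. Your plan correctly predicts the shape of the answer, but the filtration construction that makes the count rigorous (and shows the overcount is only $O(m^{n+nk-2})$) is the substance of the proof and is missing.
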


Here, the fractions $\frac{1}{u_1 ... u_n}$ are to be computed by forgetting the indexes on the integers in the set $S_{k,n}$. In \cite{cad16}, the particular case where $k=1$ (i.e. the case of \emph{symmetric differentials}) was already proved, under the additional assumption that $\Omega_{\overline{X}}$ is nef. Our formula removes this hypothesis, and extends the result to any order $k$.

Using the results of \cite{baktsi15}, it is not hard to derive explicit orders $k$ for $E_{k, \bullet}^{GG} \Omega_{\overline{X}}$ to have maximal growth:
\begin{corol} \label{corolorder} Let $\overline{X} = \overline{\quotient{\mathbb B^n}{\Gamma}}$ be a toroidal compactification of a ball quotient. Let $k \in \mathbb N$. Then, under any of the following hypotheses:
\begin{enumerate}
\item $n \in \left[|4,5\right|]$ et $k > e^{-\gamma} e^{ -(-D)^n ( (n-2) n! + 1)}$ ;
\item $n \geq 6$ et $k > e^{-\gamma} e^{\frac{\frac{\pi^2}{6} (n-2)n! + 1}{ \frac{n+1}{2 \pi} - 1}}$,
\end{enumerate}
the line bundle $\mathcal O_{\overline{X}^{GG}_k}(1)$ is big. For the first values of $n \geq 4$, this yields the lower bounds for $\log k$ displayed in Table \ref{fig:table}.
\begin{table}[h!]
\begin{center}
\setlength{\tabcolsep}{7pt}
\begin{tabular} {c|ccccc}
$n$ & $4$ & $5$ & $6$ & $7$ & $8$ \\
\hline
$\log k$ & $-\gamma +5\, (-D^4)$  & $-\gamma + 361\, (D^5)$  &  $41\,534$ & $151\,711$  & $920\,325$ 
\end{tabular}
\caption{Effective lower bounds on $\log k$ to have $\mathcal O_{\overline{X}^{GG}_k}(1)$ big} 
\label{fig:table}
\end{center}
\end{table}
\end{corol}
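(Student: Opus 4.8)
The plan is to derive Corollary~\ref{corolorder} from the volume lower bound of Theorem~\ref{thmvol} by two successive steps: first simplifying the two combinatorial sums into closed-form expressions in $n$ and $k$, and then solving the resulting inequality $\mathrm{vol}(E^{GG}_{k,\bullet}\Omega_{\overline X}) > 0$ for $k$. Recall that $\mathcal O_{\overline X^{GG}_k}(1)$ is big precisely when the volume is strictly positive, so the whole task reduces to finding when the right-hand side of \eqref{minorationfinale} becomes positive. Since $(K_{\overline X}+D)^n > 0$ (general type) while the coefficient $(-D)^n$ of the second sum carries a sign that makes that term a \emph{negative} contribution, the strategy is to show the first (positive) sum eventually dominates the second (negative) one as $k$ grows.

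First I would analyze the two sums asymptotically in $k$. The key observation is that a sum of the shape $\sum_{1 \le i_1 \le \dots \le i_n \le k} \frac{1}{i_1 \cdots i_n}$ grows like $\frac{1}{n!}\left(\sum_{i=1}^k \frac1i\right)^n \sim \frac{1}{n!}(\log k + \gamma)^n$, since the ordered sum is, up to lower-order diagonal corrections, $\frac{1}{n!}$ times the $n$-th power of the harmonic sum $H_k \sim \log k + \gamma$. The first sum over $\{u_1 \le \dots \le u_n\} \subset S_{k,n}$ is of the same type but taken over the enlarged ground set $S_{k,n}$, in which each integer $1,\dots,k$ is repeated $n+1$ times; forgetting the indices, its dominant term is governed by $(n+1)H_k$, so it behaves like a constant (depending on $n$) times $(\log k)^n$ with the \emph{same} leading power but a larger multiplicative constant coming from the $(n+1)$-fold repetition. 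This is exactly where I expect to invoke the estimates of \cite{baktsi15}, which should provide the precise comparison between these two harmonic-type sums and convert the leading asymptotics into the explicit constants $\frac{\pi^2}{6}$ and $\frac{n+1}{2\pi}$ appearing in the statement.

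Next I would assemble the inequality. Writing $L := (\log k + \gamma)$, Theorem~\ref{thmvol} gives, after dividing out the harmless positive factor $\frac{1}{(k!)^n}$, a lower bound of the form (positive constant)$\cdot L^n - (\text{constant})\cdot(-D)^n \cdot L^n + \text{lower order}$, so positivity of the volume amounts to a comparison of leading coefficients once $L$ is large. Making this rigorous means controlling the error terms between each ordered sum and its harmonic-power approximation; the bounds from \cite{baktsi15} should give a \emph{clean} inequality valid for all $k$ rather than merely asymptotically, which is what lets one extract an explicit threshold. Solving the resulting inequality for $L$, hence for $\log k$, produces the two regimes: for $n \in \{4,5\}$ the threshold is small enough that the $(-D)^n$ term dominates the bound and one obtains $\log k > -\gamma + (\text{const})\,(-D)^n$, whereas for $n \ge 6$ the combinatorial constants dictate the ratio $\frac{\frac{\pi^2}{6}(n-2)n! + 1}{\frac{n+1}{2\pi}-1}$ in the exponent. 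Exponentiating recovers the stated bounds on $k$, and plugging $n = 4,\dots,8$ yields the entries of Table~\ref{fig:table}.

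The main obstacle I anticipate is not the conceptual structure, which is a straightforward "dominant term wins" argument, but obtaining the \emph{sharp effective} comparison between the two combinatorial sums uniformly in $k$. A crude asymptotic $\sim \frac{1}{n!}(\log k)^n$ is easy, but to get an explicit $k$-threshold one needs two-sided bounds with controlled constants — in particular a lower bound on the first (positive) sum and an upper bound on the second (negative) sum, both valid for \emph{every} $k$, including small $k$ where the harmonic approximation is poorest. This is precisely the role of \cite{baktsi15}, and the delicate point will be tracking how the repetition structure of $S_{k,n}$ (each value appearing $n+1$ times) feeds into the constant $\frac{n+1}{2\pi}$, and verifying that the denominator $\frac{n+1}{2\pi}-1$ is positive exactly in the stated range $n \ge 6$ — which is why the cases $n \in \{4,5\}$ require the separate, additive form of the bound.
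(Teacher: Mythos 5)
Your overall skeleton (specialize Theorem \ref{thmvol}, estimate both combinatorial sums in terms of $\log k+\gamma$, solve for $k$) matches the paper, but there is a genuine gap in how you propose to close the inequality, and it stems from a misreading of what \cite{baktsi15} supplies. The two sums in \eqref{minorationfinale} have the \emph{same} leading order: after normalization the first term behaves like $\frac{(K_{\overline{X}}+D)^n}{n!}(\log k+\gamma)^n$ and the second like $\frac{(-D)^n}{n!}(\log k+\gamma)^n$, so there is no regime in which ``the first sum eventually dominates the second'' on combinatorial grounds alone. Positivity hinges entirely on comparing the two \emph{intersection numbers} $(K_{\overline{X}}+D)^n$ and $(-D)^n$, which are a priori unrelated quantities depending on the lattice $\Gamma$. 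The role of \cite{baktsi15} is precisely to supply this geometric input: for $n\geq 6$ it gives $(K_{\overline{X}}+D)^n+\alpha(-D)^n>0$ for all $\alpha\in\left]0,\left(\tfrac{n+1}{2\pi}\right)^n\right[$, which is where the constant $\tfrac{n+1}{2\pi}$ comes from. It has nothing to do with harmonic-sum estimates; the constant $\tfrac{\pi^2}{6}$ is just $\zeta(2)$, arising from the paper's elementary upper bound \eqref{majorationestimeebord} on $\sum_{i_1\leq\cdots\leq i_n\leq k}\frac{1}{i_1\cdots i_n}$ (decompose by the number of distinct indices, bound $\sum_j j^{-2}$ by $\pi^2/6$, and use the binomial theorem and the mean value inequality). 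The lower bound on the first sum is likewise elementary, namely \eqref{minorationestimeeouvert}. With these two bounds one forms the ratio $A(k,n)$, which tends to $1^+$ as $k\to\infty$, and the threshold on $k$ is obtained by forcing $A(k,n)<\left(\tfrac{n+1}{2\pi}\right)^n$.

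You correctly observe that $\tfrac{n+1}{2\pi}-1>0$ only for $n\geq 6$, but you do not identify the substitute argument for $n\in\{4,5\}$: there the paper uses that $(K_{\overline{X}})^n=(K_{\overline{X}}+D)^n+(-D)^n$ is a \emph{positive integer}, hence $\geq 1$, so that $(K_{\overline{X}}+D)^n+\lambda(-D)^n>0$ for all $\lambda<1+\tfrac{1}{-(-D)^n}$; requiring $A(k,n)$ to fall below this slightly-larger-than-one threshold yields the bound $\log k+\gamma>-(-D)^n((n-2)n!+1)$, which is why the answer in these dimensions depends on $(-D)^n$. Without both the Bakker--Tsimerman inequality (for $n\geq 6$) and the integrality trick (for $n=4,5$), your comparison of leading coefficients cannot be completed, so as written the proof does not go through.
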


The starting point to prove Theorem \ref{thmvol} consists in giving a more algebraic interpretation of the central metric construction of \cite{dem11}. Let us give the main ideas about this construction. For any complex manifold $X$, the Green-Griffiths jet differential spaces $X_k^{GG}$ can be deformed into a weighted projective bundle, using the standard construction of the Rees algebra. More specifically, there exists a family $\mathcal X_k^{GG} \longrightarrow X \times \mathbb C$ such that for any $\lambda \in \mathbb C^\ast$, the specialization $(\mathcal X^{GG}_k)_{\lambda} \longrightarrow X \times \left\{ \lambda \right\}$ is isomorphic to $X_k^{GG}$, and the specialization $(\mathcal X_k^{GG})_0 \longrightarrow X \times \left\{ 0 \right\}$ is isomorphic to the \emph{weighted projective bundle} ${\rm P} \left(T_X^{(1)} \oplus ... \oplus T_X^{(k)} \right)$. This last bundle is defined to be the quotient of $T_X \oplus ... \oplus T_X \longrightarrow X$ by the $\mathbb C^\ast$-action $\lambda \cdot (v_1, ..., v_k) = (\lambda v_1, ..., \lambda^k v_k)$. Moreover, there is a natural orbifold line bundle $\mathcal O_{\mathcal X_k^{GG}}(1)$ on the family $\mathcal X_k^{GG}$ whose restriction to the fibers $\left( \mathcal X_k^{GG} \right)_\lambda$ gives the tautological bundles of $X_k^{GG}$ and ${\rm P}(T_X^{(1)} \oplus ... \oplus T_X^{(k)})$. The metric used in \cite{dem11} can actually be seen as a singular metric on $\mathcal O_{\mathcal X_k^{GG}}(1)$; it is constructed in such a way that its specialization to the zero fiber ${\rm P} (T_X^{(1)} \oplus ... \oplus T_X^{(k)})$ is induced by some metric on $T_X$.

One convenient feature about this family $\mathcal X_k^{GG}$ is the fact that it permits to interpret the intersection products on the jet spaces $X_k^{GG}$ in terms of the intersection theory on ${\rm P}(T_X^{(1)} \oplus ... T_X^{(k)})$. When dealing with Chow groups computations, these last spaces share many properties with the usual weightless projective spaces. In the first part of our work, we will recall some results about the intersection theory with rational coefficients for a weighted projective spaces ${\rm P}(E_1^{(a_1)} \oplus ... \oplus E_p^{(a_p)})$, which were proved first by Al-Amrani \cite{alamrani97}. Since we work with rational coefficients instead of integer ones, the study is somewhat simplified; for the reader's convenience, we will explain how we could prove these results by following \cite{fulton98} in a standard way. 

The other reason why studying the family $\mathcal X_k^{GG}$ is interesting is the fact that the main positivity properties (e.g. nefness, ampleness) of the tautological line bundle $\mathcal O(1) \longrightarrow {\rm P}\left(T_X^{(1)} \oplus ... \oplus T_X^{(k)} \right)$ can be extended from the fiber over $0$ to other fibers over $\lambda \in \mathbb C^\ast$, i.e. to the line bundle $\mathcal O_{X_k^{GG}} (1)$. Moreover, the positivity properties of $\mathcal O(1)$ on ${\rm P}(T_X^{(1)} \oplus ... \oplus T_X^{(k)})$ are directly related to the ones of the vector bundle $T_X$. More generally, we will show in Section \ref{positivitysect} that if $E_1^\ast, ..., E_p^\ast$ are ample (resp. nef), then the orbifold line bundle $\mathcal O(1) \longrightarrow {\rm P}(E_1^{(a_1)} \oplus ... \oplus E_p^{(a_p)})$ is ample (resp. nef) in the orbifold sense for any choice of weights $a_1, ..., a_p$. This will imply in particular the following result:

\begin{prop} Let $X$ be a complex projective manifold. Assume that $\Omega_X$ is ample (resp. nef). Then for any $k \in \mathbb N^\ast$, $\mathcal O_{X_k^{GG}}(1)$ is ample (resp. nef) in the orbifold sense.
\end{prop}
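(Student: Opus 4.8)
The plan is to reduce the statement about jet spaces to a statement about the weighted projective bundle built from the cotangent bundle, and to exploit the deformation family $\mathcal X_k^{GG}$ together with the general positivity result announced for weighted projective bundles. The key observation is that the proposition follows immediately once we know two things: first, that the general positivity principle stated in Section \ref{positivitysect} applies here, namely that if $E_1^\ast, \dots, E_p^\ast$ are ample (resp.\ nef) then $\mathcal O(1) \longrightarrow {\rm P}(E_1^{(a_1)} \oplus \dots \oplus E_p^{(a_p)})$ is ample (resp.\ nef) in the orbifold sense for \emph{any} choice of weights; and second, that positivity of $\mathcal O(1)$ over the zero fiber of the family propagates to the generic fiber $X_k^{GG}$. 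So my proof would be organized as a straightforward application of these two ingredients to the specific bundle ${\rm P}(T_X^{(1)} \oplus \dots \oplus T_X^{(k)})$.

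First I would set $E_1 = \dots = E_k = T_X$ and $a_i = i$, so that the weighted projective bundle ${\rm P}(T_X^{(1)} \oplus \dots \oplus T_X^{(k)})$ is exactly the zero fiber $(\mathcal X_k^{GG})_0$ of the deformation family. Under the hypothesis that $\Omega_X = T_X^\ast$ is ample (resp.\ nef), all the duals $E_i^\ast = \Omega_X$ are ample (resp.\ nef), so the general result of Section \ref{positivitysect} applies verbatim and shows that the tautological orbifold line bundle $\mathcal O(1)$ on ${\rm P}(T_X^{(1)} \oplus \dots \oplus T_X^{(k)})$ is ample (resp.\ nef) in the orbifold sense. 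This handles the zero fiber.

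Next I would transport this positivity to the generic fiber. The family $\mathcal X_k^{GG} \longrightarrow X \times \mathbb C$ carries the orbifold line bundle $\mathcal O_{\mathcal X_k^{GG}}(1)$ restricting to the tautological bundle on each fiber. Since $\mathbb C$ is connected and the total space is flat (proper) over it, the standard openness of ampleness (resp.\ the semicontinuity/closedness statements for nefness in families, in the orbifold sense) lets me propagate the positivity of $\mathcal O(1)$ from the central fiber $(\mathcal X_k^{GG})_0$ to all nearby fibers, hence to $(\mathcal X_k^{GG})_\lambda \cong X_k^{GG}$ for $\lambda \in \mathbb C^\ast$. Because the $\mathbb C^\ast$-action identifies all the fibers over $\lambda \neq 0$ with $X_k^{GG}$ compatibly with $\mathcal O(1)$, it suffices to obtain the conclusion over a single such $\lambda$, and I would conclude that $\mathcal O_{X_k^{GG}}(1)$ is ample (resp.\ nef) in the orbifold sense for every $k \in \mathbb N^\ast$.

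The main obstacle I anticipate is the passage from the zero fiber to the generic fiber in the \emph{orbifold} setting: ordinary openness of ampleness is standard for line bundles on flat families of projective varieties, but here $X_k^{GG}$ is singular and $\mathcal O(1)$ is only an orbifold (fractional) line bundle, so I must make sure that the positivity notions are stable under the deformation. The cleanest route is probably to work with an integral power $\mathcal O(N)$ that genuinely descends to a line bundle, verify that the family remains proper and that the intersection-theoretic characterization of ampleness (or nefness, via nonnegativity of intersection with curves) is preserved, and then divide back by $N$. I would need to check carefully that the deformation parameter does not interact pathologically with the weights $a_i = i$, but given the explicit Rees-algebra description of $\mathcal X_k^{GG}$ this stability should follow from the same computations that underlie the intersection-theory comparison recalled in the first part of the paper.
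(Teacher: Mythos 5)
Your proposal is correct and follows essentially the same route as the paper: apply the general positivity result for weighted projective bundles (Propositions \ref{ampleprop} and \ref{propnef}) to the zero fiber ${\rm P}(T_X^{(1)} \oplus \dots \oplus T_X^{(k)})$ of the Rees family $\mathcal X_k^{GG}$, then propagate to the fibers over $\lambda \neq 0$, which are all identified with $X_k^{GG}$. The only point to phrase carefully is the nef case, where one does not have openness but rather the statement that nefness on one fiber implies nefness on very general fibers (cited from \cite{lazpos1} in the paper); as you note, a single good $\lambda \in \mathbb C^\ast$ suffices since all nonzero fibers are isomorphic compatibly with $\mathcal O(1)$.
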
 

Getting back to the case of a ball quotient, we will use the logarithmic version of the previous discussion, and Riemann-Roch theorem in the orbifold case (see \cite{toen99}) to obtain an estimate for the volume of the Green-Griffiths logarithmic jets differentials $E_{k, \bullet}^{GG} \Omega_{\overline{X}}(\log D)$ in terms of the Segre class of the weighted direct sum $T_{\overline{X}} (- \log D)^{(1)} \oplus ... \oplus T_{\overline{X}} (- \log D)^{(k)}$. This last Segre class can be in turn expressed in terms of the standard Segre class $s_\bullet (T_{\overline{X}} (- \log D))$. An application of Hirzebruch proportionality principle in the non-compact case (see \cite{mum77}) will give our final estimate on $\mathrm{vol} (E_{k, \bullet}^{GG} \Omega_{\overline{X}} ( \log D))$, which will be the first member of the estimate \eqref{minorationfinale}.

Finally, it will remain to relate the growth of the logarithmic jet differentials to the growth of the standard ones. To to this, we will simply bound from above the sections of the coherent sheaves $\mathcal Q_{k,m}$, defined for any $k$ and $m$ by the exact sequence
$$
0 \longrightarrow E_{k, m}^{GG} \Omega_{\overline{X}} \longrightarrow E_{k, m}^{GG} \Omega_{\overline{X}} (\log D) \longrightarrow \mathcal Q_{k,m} \longrightarrow 0.
$$
We will find a suitable filtration on the sheaves $\mathcal Q_{k,m}$, in such a way that the graded terms are locally free above the boundary $D$, and can be expressed in terms of the vector bundles $\Omega_D$ and $N_{D/\overline{X}}$. Then, using Riemann-Roch computations and the fact that $D$ is a disjoint union of abelian varieties, we will be able to bound $h^0(\mathcal Q_{k,m})$ from above, for a fixed $k$, and $m$ going to $+ \infty$. This will give the second term in the estimate \eqref{minorationfinale}.

\subsection*{Acknowledgments} During the preparation of this work, the author was partially supported by the the French ANR project ”FOLIAGE”, ProjectID: ANR-16-CE40-0008. \\I would like to thank my advisor Erwan Rousseau for his guidance and his support, and Julien Grivaux for many helpful and enlightening discussions. I thank also the anonymous referee for his suggestions which I hope have permitted to improve the quality and clarity of this article.

\section{Segre classes of weighted projective bundles} \label{chernsect}

We will now recall some results, first proved by Al-Amrani \cite{alamrani97}, permitting to construct Chern classes of weighted projective bundles. We will state the results in the simpler setting of Chow rings with rational coefficients.  

\begin{defi} \label{defiprojbundle} Let $X$ be a complex algebraic projective variety. Consider a family $(E_i, a_i)_{1 \leq i \leq p}$, where the $E_i$ are vector bundles on $X$, and the $a_i$ are positive integers. The \emph{weighted projectivized bundle}  associated with the datum $(E_i, a_i)$ is the projectivized scheme of the graded $\mathcal O_X$-algebra $\mathrm{Sym} (E_1^{(a_1)} \oplus ... \oplus E_p^{(a_p)})^\ast$, defined as
$$
\mathrm{Sym} (E_1^{(a_1)} \oplus ... \oplus E_p^{(a_p)})^\ast = \mathrm{Sym}\; E_1^\ast {}^{(a_1)} \otimes_{\mathcal O_X} ... \otimes_{\mathcal O_X} \mathrm{Sym} \; E_p^\ast {}^{(a_p)},
$$
where, for any $i$, $\mathrm{Sym} E_i^\ast {}^{(a_i)}$ is the graded $\mathcal O_X$-algebra generated by sections of $E_i^\ast {}^{(a_i)}$ in degree $a_i$. We will denote this scheme by ${{\rm P}}(E_1^{(a_1)} \oplus ... \oplus E_p^{(a_p)})$;  remark that we use here the geometric convention for projectivized bundles.

We will say, by abuse of language, that $E_1^{(a_1)} \oplus ... \oplus E_p^{(a_p)}$ is a \emph{weighted direct sum}, or even a \emph{weighted vector bundle}.
\end{defi}

\begin{prop} The variety ${\rm P}( E_1^{(a_1)} \oplus ... \oplus E_p^{(a_p)})$ has a natural orbifold structure (or a structure of Deligne-Mumford stack), for which the tautological line bundle $\mathcal O(1)$ is naturally defined as an orbifold line bundle. Moreover, this orbifold line bundle is \emph{locally ample}, in the sense that the local isotropy groups of the orbifold structure act transitively on the fibres of $\mathcal O(1)$ (see for example \cite{rosstho09}). Besides, if $\lcm(a_1, ..., a_p) | m$, the bundle $\mathcal O(m)$ can be identified to a standard line bundle on ${\rm P}(E_1^{(a_1)} \oplus ... \oplus E_r^{(a_p)})$.
\end{prop}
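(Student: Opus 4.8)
The plan is to realise ${\rm P}(E_1^{(a_1)} \oplus \dots \oplus E_p^{(a_p)})$ as a global quotient stack and to read off all three assertions from the weights of a single $\mathbb{G}_m$-action. Write $E = E_1 \oplus \dots \oplus E_p$, let $\mathrm{Tot}(E)$ be its total space, and set $W = \mathrm{Tot}(E) \setminus 0_X$, the complement of the zero section. Fibrewise over $X$, let $\mathbb{G}_m$ act by $\lambda \cdot (v_1, \dots, v_p) = (\lambda^{a_1} v_1, \dots, \lambda^{a_p} v_p)$. First I would check that $\mathrm{Proj}$ of the graded algebra of Definition~\ref{defiprojbundle} is the good quotient $W /\!\!/ \mathbb{G}_m$: over a trivialising open set that algebra is the ring of functions on $\mathrm{Tot}(E)$, in which a linear coordinate on $E_i$ sits in degree $a_i$, exactly the weight by which it is scaled; hence the algebra grading coincides with the $\mathbb{G}_m$-weight grading, the irrelevant locus is the zero section, and the two $\mathrm{Proj}$'s agree. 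This exhibits ${\rm P}(\dots)$ as the coarse space of the quotient stack $\mathcal{P} := [W/\mathbb{G}_m]$.

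Next I would verify that $\mathcal{P}$ is a separated Deligne--Mumford stack, which furnishes the orbifold structure (smooth over $X$, and thus an honest orbifold when $X$ is smooth as in our applications; note that when $\gcd(a_1,\dots,a_p) > 1$ there is a generic $\mu_{\gcd}$-gerbe, which is why the statement is phrased with \emph{orbifold or Deligne--Mumford stack}). The stabiliser of $(v_1, \dots, v_p)$ is $\{\lambda : \lambda^{a_i} = 1 \text{ whenever } v_i \neq 0\} = \mu_d$ with $d = \gcd\{a_i : v_i \neq 0\}$, which is finite; in characteristic zero finite stabilisers already force the quotient to be Deligne--Mumford. Separatedness follows from properness of the action, which I would deduce from the fact that every orbit is closed in $W$ (as $\lambda \to 0$ or $\lambda \to \infty$ the only limit point is the excluded origin). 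The orbifold tautological bundle $\mathcal{O}(1)$ is then defined as the line bundle on $\mathcal{P}$ given by $\mathcal{O}_W$ with its weight-one $\mathbb{G}_m$-linearisation (equivalently, the $\mathrm{Proj}$ tautological sheaf, now genuinely invertible on the stack); its fibre at $[v]$ is the weight-one character line.

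Local ampleness is then immediate: at $x = [v]$ the isotropy group $\mu_d$ acts on the fibre $\mathcal{O}(1)_x$ through the restriction of the weight-one character, i.e.\ by the inclusion $\mu_d \hookrightarrow \mathbb{C}^\ast$, which is faithful. Thus the isotropy orbits on the punctured fibre are as large as possible (the full group of $d$-th roots of unity), which is exactly the transitivity/local ampleness condition of \cite{rosstho09}. For the last assertion, the fibre of $\mathcal{O}(m)$ at $x = [v]$ carries the weight-$m$ character $\zeta \mapsto \zeta^m$ of $\mu_d$. If $\lcm(a_1, \dots, a_p) \mid m$, then for any $i$ with $v_i \neq 0$ we have $d \mid a_i \mid \lcm(a_1, \dots, a_p) \mid m$, so this character is trivial at every point. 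An orbifold line bundle whose inertia acts trivially on all fibres descends along the coarse-space map $\mathcal{P} \to {\rm P}(\dots)$ to a genuine line bundle, which is the desired identification (and the condition is in fact also necessary, since taking $v$ supported on a single $E_i$ produces isotropy $\mu_{a_i}$).

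I expect the main obstacle to lie not in the weight bookkeeping but in the two softer points: verifying carefully that $\mathcal{P}$ is a separated Deligne--Mumford stack (the properness of the $\mathbb{G}_m$-action, which I reduced to closedness of orbits), and invoking the correct descent criterion for the final part, namely that a line bundle on a Deligne--Mumford stack whose inertia acts trivially on all fibres descends to the coarse moduli space; for this I would cite the standard coarse-space descent result. The remaining point to pin down is matching the faithfulness computation precisely to the definition of \emph{locally ample} used in \cite{rosstho09}.
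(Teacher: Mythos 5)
Your proof is correct and follows essentially the same route as the paper: both present ${\rm P}(E_1^{(a_1)} \oplus \dots \oplus E_p^{(a_p)})$ as the quotient stack of the (punctured) total space by the weighted $\mathbb C^\ast$-action and read off the orbifold structure and the behaviour of $\mathcal O(1)$ from the isotropy groups and their characters. The paper simply localizes to the model $\mathbb P(a_1,\dots,a_p)$ and delegates the claims on $\mathcal O(1)$ to \cite{dol82} and \cite{rosstho09}, whereas you carry out those verifications (finite stabilisers, faithfulness of the weight-one character, triviality of the weight-$m$ character when $\lcm(a_1,\dots,a_p) \mid m$ and descent to the coarse space) explicitly; this is a difference of detail, not of method.
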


\begin{proof} We can naturally endow ${\rm P}(E_1^{(a_1)} \oplus ... \oplus E_r^{(a_p)})$ with a structure of Artin stack $\mathcal P$, since it can be considered as a quotient stack
$$
\quotient{ E_1 \oplus ... \oplus E_p }{\mathbb C^\ast},
$$
where $\mathbb C^\ast$ acts by $\lambda \cdot (v_1, ..., v_p) = (\lambda^{a_1} v_1, ..., \lambda^{a_p} v_p)$. 

Locally on $X$, the weighted projectivized bundle can be trivialized as a product of the base with a weighted projectivized space $\mathbb P(a_1, ..., a_p)$, where each $a_i$ appears $\mathrm{rk}\; E_i$ times. Consequently, the Artin stack $\mathcal P$ has locally an orbifold structure, which makes it an orbifold stack. The claims on $\mathcal O(1)$ are local, and they can be proved directly using \cite{dol82} and \cite{rosstho09}.
\end{proof}

Let us start our review of the properties of the Chow groups with rational coefficients of the weighted projectivized bundles.

\begin{prop} Let $E_1^{(a_1)} \oplus ... \oplus E_p^{(a_p)}$ be a weighted direct sum. Let us denote the natural projection by $p : {\rm P}_X \left(E_1^{(a_1)} \oplus ... \oplus E_p^{(a_p)} \right) \longrightarrow X$. For any $k$, there is an isomorphism
\begin{equation} \label{isomfond}
A_k \, {\rm P}_X \left(E_1^{(a_1)} \oplus ... \oplus E_p^{(a_p)} \right) _{\mathbb Q} \cong \bigoplus_{j = 0}^r \left( A_{k - r +j} X \right)_{\mathbb Q},
\end{equation} where $r = \sum_{j =1}^p \mathrm{rk}\, E_j - 1$.
\end{prop}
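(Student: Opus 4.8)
The plan is to adapt Fulton's proof of the projective bundle formula (\cite{fulton98}, Ch.~3) to the weighted, rational-coefficient setting, the role of the ordinary hyperplane class being played by a rational class coming from the orbifold bundle $\mathcal O(1)$. Write $P = {\rm P}_X(E_1^{(a_1)} \oplus \cdots \oplus E_p^{(a_p)})$ and set $\xi = c_1(\mathcal O(1)) \in (A^1 P)_{\mathbb Q}$. This is well defined with rational coefficients precisely because, by the previous proposition, $\mathcal O(m)$ is an honest line bundle as soon as $\lcm(a_1, \dots, a_p) \mid m$, so one puts $\xi := \tfrac{1}{m} c_1(\mathcal O(m))$. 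With $r = \sum_j \mathrm{rk}\, E_j - 1$ the fibre dimension, I would introduce the candidate map
\[
\Phi : \bigoplus_{j=0}^r (A_{k-r+j} X)_{\mathbb Q} \longrightarrow (A_k P)_{\mathbb Q}, \qquad (\alpha_0, \dots, \alpha_r) \longmapsto \sum_{j=0}^r \xi^j \cap p^* \alpha_j,
\]
and prove that it is an isomorphism, surjectivity and injectivity being treated separately.

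For surjectivity I would argue by Noetherian induction on $X$ using the right-exactness of Chow groups. For a closed $Z \hookrightarrow X$ with open complement $U$, restriction of $P$ produces a commutative ladder relating the localization sequence $A_*(P_Z) \to A_*(P) \to A_*(P_U) \to 0$ to the corresponding sequence of direct sums of $A_*$ of the base through $\Phi$; compatibility follows from the projection formula and flat pullback, since $\xi$ restricts to the analogous class on $P_Z$ and $P_U$. Shrinking $U$ until all the $E_i$ become trivial reduces the statement over $U$ to the product case $U \times \mathbb P(a_1, \dots, a_p)$, each $a_i$ occurring with multiplicity $\mathrm{rk}\, E_i$, where surjectivity reduces to the absolute computation $A_*(\mathbb P(b_0, \dots, b_r))_{\mathbb Q} \cong \bigoplus_{i=0}^r \mathbb Q\,\xi^i$ (this is Al-Amrani's computation \cite{alamrani97}, and can be seen by presenting the weighted projective space as a finite quotient of $\mathbb P^r$ and passing to $G$-invariants over $\mathbb Q$). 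Dévissage then propagates surjectivity from $Z$, handled by the inductive hypothesis since $\dim Z < \dim X$, and from $U$, to all of $X$.

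For injectivity I would invert $\Phi$ explicitly via Segre classes. Pushing powers of $\xi$ forward defines operators $s_i := p_*(\xi^{r+i} \cap -)$ on $(A_* X)_{\mathbb Q}$, and the projection formula gives $p_*(\xi^{r+i} \cap p^* \alpha) = s_i \cap \alpha$, together with the vanishing $p_*(\xi^s \cap p^* \alpha) = 0$ for $s < r$ on dimension grounds. Applying $p_*(\xi^{r-j'} \cap -)$ to a class $\beta = \sum_j \xi^j \cap p^* \alpha_j$ therefore yields, for each $j'$, the relation $p_*(\xi^{r-j'} \cap \beta) = s_0 \cap \alpha_{j'} + \sum_{j > j'} s_{j-j'} \cap \alpha_j$. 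This is a triangular system in the unknowns $\alpha_j$ whose diagonal entries are all the operator $s_0 \cap -$, so $\beta = 0$ forces every $\alpha_{j}$ to vanish, starting from $j' = r$ and descending, provided $s_0$ is invertible.

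The only genuine subtlety, and the reason the statement is restricted to rational coefficients, is exactly this invertibility of $s_0$. Here $s_0$ is multiplication by the fibre degree $\int_{\mathbb P(b_0, \dots, b_r)} \xi^r = \tfrac{1}{b_0 \cdots b_r}$, where the $b_i$ enumerate the weights with multiplicity; this is where the orbifold nature of $\mathcal O(1)$ enters, since the self-intersection of the tautological class on a weighted projective space is a nonzero rational but not an integer. Thus $s_0 = \tfrac{1}{b_0 \cdots b_r}\,\mathrm{id}$ is an isomorphism only after tensoring with $\mathbb Q$, whereas over $\mathbb Z$ the analogous map would fail to be injective because of the denominators introduced by the weights. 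I expect this normalization to be the main point to get right; granting it, the triangular inversion gives injectivity, and combined with surjectivity it shows that $\Phi$ is the desired isomorphism.
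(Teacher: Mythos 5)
Your proof is correct and follows essentially the same route that the paper only sketches: reduce to the case of trivialized $E_i$ via Noetherian induction and the localization sequence, compute the fibre case by presenting the weighted projective space as a finite quotient of $\mathbb P^r$ (whence the need for $\mathbb Q$-coefficients), and obtain injectivity from the Segre-class triangular system exactly as in Fulton. One small numerical correction: the fibre degree of $\xi^r$ on $\mathbb P(b_0,\dots,b_r)$ is $\gcd(b_0,\dots,b_r)/(b_0\cdots b_r)$ rather than $1/(b_0\cdots b_r)$ (compare Proposition \ref{propformulesegre}, e.g.\ $\mathbb P(2,2)\cong\mathbb P^1$), but this does not affect your argument, which only uses that $s_0$ acts by an invertible rational scalar.
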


To prove this result, we can start by checking it in the case where $X$ is an affine scheme. In that case, the weighted projective bundle is a quotient of a standard (trivial) projective bundle by a finite group, and it suffices to use the fact that such a quotient induces an isomorphism on the Chow rings with rational coefficients. We can then use the localization exact sequence to prove the general result.

Using the isomorphism \eqref{isomfond}, we can now define the Segre classes associated with a weighted direct sum $E_1^{(a_1)} \oplus ... \oplus E_p^{(a_p)}$.

\begin{defi} \label{defisegre} Let $X$ be a projective algebraic variety of dimension $n$, and let $E_1^{(a_1)} \oplus ... \oplus E_p^{(a_p)}$ be a weighted direct sum on $X$. Let $q : {\rm P}(E_1^{(a_1)} \oplus ... \oplus E_p^{(a_p)}) \longrightarrow X$ be the natural projection. 

If $k \in \left[|0, n \right|]$, the $k$-th Segre class of $E_1 ^{(a_1)} \oplus ... \oplus E_p^{(a_p)}$ is defined as an endomorphism of $(A_\ast X)_{\mathbb Q}$. If $\alpha \in (A_l X)_{\mathbb Q}$, let

$$
s_k \left( E_1^{(a_1)} \oplus ... \oplus E_p^{(a_p)} \right) \cap \alpha = \frac{1}{m^{k + r}} q_\ast \left( c_1 \mathcal O(m)^{r + k} \cap q^\ast \alpha \right).  
$$
where $r = \sum_{i} \mathrm{rk} E_i - 1$, and $m = \mathrm{lcm}(a_1, ..., a_p)$.
\end{defi}

\begin{rem} \label{remdivisible} In Definition \ref{defisegre}, we could have replaced $m$ by any integer divisible by $\mathrm{lcm} (a_1, ..., a_p)$. The important fact used here is that $\mathcal O(m)$ is a standard line bundle, which allows us to define its first Chern class in the usual way.
\end{rem}

There is a Whitney formula for the weighted projective bundles, which permits to express the Segre classes $s_j \left( E_1^{(a_1)} \oplus ... \oplus E_p^{(a_p)} \right)$ in terms of the $s_\bullet(E_j)$ and of the weights $(a_j)$:

\begin{prop} \label{propformulesegre}
Let $E_1^{(a_1)} \oplus  ... \oplus E_p^{(a_p)}$ be a weighted projective sum. We have
\begin{equation} \label{whitneypoids}
s_\bullet \left( E_1^{(a_1)} \oplus ... \oplus E_p^{(a_p)} \right) = \frac{\mathrm{gcd}(a_1, ..., a_p)}{a_1 \; ... \; a_p} \prod_{1 \leq j \leq p} s_\bullet \left( E_j^{(a_j)}\right),
\end{equation}
where, for any vector bundle $E$ and any weight $a \in \mathbb N$, we have $s_\bullet\left(E^{(a)}\right) = \frac{1}{a^{\mathrm{rk E} - 1}}\sum_{j \geq 0} \frac{s_j(E)}{a^j}$.
\end{prop}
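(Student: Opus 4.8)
The plan is to prove \eqref{whitneypoids} by the splitting principle, reducing to weighted direct sums of line bundles, where an explicit finite map to an \emph{ordinary} projective bundle makes everything computable. First I would choose a flag bundle $f \colon Y \to X$ on which every $E_j$ splits as $E_j = \bigoplus_{\nu = 1}^{e_j} L_{j,\nu}$, with $e_j = \mathrm{rk}\,E_j$. Since $f$ is smooth, forming the weighted projectivization commutes with the base change $Y \to X$, the bundle $\mathcal O(m)$ pulls back to $\mathcal O(m)$, and flat base change shows that the weighted Segre operators of Definition~\ref{defisegre} commute with $f^\ast$. As $f^\ast$ is injective on $A_\bullet(\,\cdot\,)_{\mathbb Q}$ (a tower of ordinary projective bundles), it suffices to prove the formula after pulling back to $Y$, i.e. for a weighted sum of line bundles $L_1^{(b_1)} \oplus \dots \oplus L_N^{(b_N)}$, where $N = \sum_j e_j$ and the weight list $(b_i)$ is each $a_j$ repeated $e_j$ times.

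On $P := {\rm P}(L_1^{(b_1)} \oplus \dots \oplus L_N^{(b_N)})$ I would introduce the ordinary projective bundle $Q := {\rm P}\left( L_1^{\otimes m/b_1} \oplus \dots \oplus L_N^{\otimes m/b_N} \right)$, with $m = \lcm(b_1, \dots, b_N)$, together with the fibrewise finite morphism $\psi \colon P \to Q$ sending $[v_1 : \dots : v_N]$ to $[v_1^{m/b_1} : \dots : v_N^{m/b_N}]$. This is well defined because the weight-$b_i$ scaling $v_i \mapsto \lambda^{b_i} v_i$ becomes the uniform scaling by $\lambda^m$ on $Q$, and by the same weight bookkeeping $\psi^\ast \mathcal O_Q(1) = \mathcal O_P(m)$, so that $h := c_1 \mathcal O_P(m) = \psi^\ast \zeta$ with $\zeta = c_1 \mathcal O_Q(1)$. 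Writing $q_P, q_Q$ for the projections to $Y$ (so $q_Q \circ \psi = q_P$), the projection formula together with $\psi_\ast \psi^\ast = (\deg \psi)\cdot \mathrm{id}$ (valid rationally for a finite surjective morphism of varieties) gives, with $r = N - 1$,
\[
q_{P \ast}\left( h^{r+k} \cap q_P^\ast \alpha \right) = (\deg \psi)\, q_{Q \ast}\left( \zeta^{r+k} \cap q_Q^\ast \alpha \right) = (\deg \psi)\, s_k(W) \cap \alpha,
\]
where $W := \bigoplus_i L_i^{\otimes m/b_i}$ and $s_k(W)$ is the ordinary Segre class. Hence $s_k(L_1^{(b_1)} \oplus \dots \oplus L_N^{(b_N)}) = \tfrac{\deg \psi}{m^{r+k}}\, s_k(W)$.

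The crux, and the step I expect to be the main obstacle, is the computation of $\deg \psi$, since this is where the arithmetic factor $\gcd(b_1, \dots, b_N)$ enters. Restricting to a fibre, $\deg \psi = \int_{\mathbb P(b_1, \dots, b_N)} c_1 \mathcal O(m)^{N-1}$ on the \emph{coarse} weighted projective space. On the associated orbifold the tautological class has the standard top self-intersection $m^{N-1}/(b_1 \cdots b_N)$; the orbifold integral differs from the one on the coarse space by the order $\gcd(b_1, \dots, b_N)$ of the generic isotropy (the roots of unity of that order act trivially), so that
\[
\deg \psi = \frac{\gcd(b_1, \dots, b_N)\, m^{N-1}}{b_1 \cdots b_N}.
\]
Care is needed to match the conventions of Definition~\ref{defisegre}, which works on the variety rather than the stack; this is precisely the normalization producing the $\gcd$ prefactor, and I would cross-check it on small examples such as $\mathbb P(2,2) \cong \mathbb P^1$.

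It then remains to reassemble the answer. Ordinary Segre classes are multiplicative over direct sums (being inverse total Chern classes), so $s_\bullet(W) = \prod_i (1 + \tfrac{m}{b_i} c_1 L_i)^{-1}$; collecting the factor $m^{-k}$ into the degree-$k$ part amounts to the substitution $c_1 L_i \mapsto c_1 L_i / m$, which turns $\sum_k \tfrac{1}{m^{r+k}} s_k(W)$ into $\tfrac{1}{m^r} \prod_i \tfrac{b_i}{b_i + c_1 L_i}$. Combined with the value of $\deg \psi$, this yields the fully split identity $s_\bullet(L_1^{(b_1)} \oplus \dots \oplus L_N^{(b_N)}) = \gcd(b_i) \prod_i (b_i + c_1 L_i)^{-1}$. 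Finally I would recognize this as the pulled-back right-hand side of \eqref{whitneypoids}: grouping the $e_j$ lines of each $E_j$ and using the elementary identity $\prod_\nu s_\bullet(L_{j,\nu}^{(a_j)}) = a_j^{\,e_j - 1}\, s_\bullet(E_j^{(a_j)})$ (itself a consequence of the formula $s_\bullet(E_j^{(a_j)}) = a_j^{-(e_j-1)} \sum_l a_j^{-l} s_l(E_j)$ and the same substitution trick), the prefactors combine as $\tfrac{\gcd(a_i)}{\prod_j a_j^{e_j}} \cdot \prod_j a_j^{e_j - 1} = \tfrac{\gcd(a_1, \dots, a_p)}{a_1 \cdots a_p}$, giving exactly \eqref{whitneypoids}. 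Since both sides are pulled back from $X$ and $f^\ast$ is injective on rational Chow groups, the formula descends to $X$.
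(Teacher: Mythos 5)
Your argument is correct, but after the first step (the splitting principle, which both you and the paper use to reduce to a weighted sum of line bundles) it takes a genuinely different route from the paper's. The paper follows Fulton's inductive scheme: inside $P = {\rm P}(L_0^{(a_0)}\oplus\cdots\oplus L_r^{(a_r)})$ the subvariety ${\rm P}(L_1^{(a_1)}\oplus\cdots\oplus L_r^{(a_r)})$ is cut out, with a computable multiplicity, by a section of $(p^\ast L_0)^{\otimes l_0}\otimes\mathcal O_P(m)$; this relates $s_\bullet(L_1^{(a_1)}\oplus\cdots\oplus L_r^{(a_r)})$ to $s_\bullet(L_0^{(a_0)}\oplus\cdots\oplus L_r^{(a_r)})$ and $c_\bullet(L_0)$, and the formula follows by induction on the number of summands. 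You instead compare the weighted bundle in one shot with an ordinary projective bundle via the fibrewise power map $\psi$, reducing everything to the classical Whitney formula for $\bigoplus_i L_i^{\otimes m/b_i}$ plus the computation of $\deg\psi$. Both routes work; yours has the advantage that the arithmetic factor $\gcd(a_1,\dots,a_p)$ appears transparently as the order of the generic isotropy, i.e.\ as the discrepancy between the orbifold and coarse-space normalizations of $\int c_1\mathcal O(m)^{N-1}$, whereas in the inductive route it must be tracked through the multiplicities of the cutting sections. Your degree formula $\deg\psi = \gcd(b_1,\dots,b_N)\,m^{N-1}/(b_1\cdots b_N)$ is correct (one can also see it by directly counting preimages of a generic point, e.g.\ $\mathbb P(2,2)\to\mathbb P^1$ and $\mathbb P(1,2,3)\to\mathbb P^2$ have degrees $1$ and $6$), the identification $\psi^\ast\mathcal O_Q(1)=\mathcal O_P(m)$ holds at the level of graded algebras, and the final bookkeeping matching the fully split identity $\gcd(b_i)\prod_i(b_i+c_1L_i)^{-1}$ against the right-hand side of \eqref{whitneypoids} checks out. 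The only point deserving a little more care in a written-up version is the step $\psi_\ast q_P^\ast\alpha = (\deg\psi)\,q_Q^\ast\alpha$ for an arbitrary cycle $\alpha$ on the base, which needs either the flatness of $\psi$ (available since $P$ is Cohen--Macaulay and $Q$ is smooth over $Y$) or the observation that $\psi$ has the same generic degree over every subvariety of $Y$.
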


To prove this result, we can use the "splitting principle" to get back to the case where the $E_i$ are all line bundles $L_i$. Now, denote $P = {\rm P}(L_0^{(a_0)} \oplus ... \oplus L_r^{(a_r)})$, and $p : P \longrightarrow X$ the canonical projection. Then, for some $m \in \mathbb N$, there exists a section of $(p^\ast L_0)^{\otimes l_0} \otimes \mathcal O_P(m)$ cutting out the subvariety ${\rm P}(L_1^{(a_1)} \oplus ... \oplus L_r^{(a_r)})$ with some computable multiplicity. As in \cite{fulton98}, we can use this fact to relate the Segre class $s_\bullet (L_1^{(a_1)} \oplus ... \oplus L_r^{(a_r)})$ with the classes $s_\bullet (L_0^{(a_0)} \oplus ... \oplus L_r^{(a_r)})$ and $c_\bullet (L_0)$. The formula then follows by induction.

\section{Positivity of weighted vector bundles} \label{positivitysect}

We now study the extension of the usual positivity properties of vector bundles to the case of weighted vector bundles.

\begin{defi} Let $\mathbb E = E_1^{(a_1)} \oplus ... \oplus E_p^{(a_p)}$ be a weighted direct sum. We say that $\mathbb E^\ast = E_1^\ast {}^{(a_1)} \oplus ... \oplus E_p^\ast {}^{(a_p)}$ is \emph{ample} (resp. \emph{nef}) if for any $m \in \mathbb N$ divisible enough, the (standard) line bundle  $\mathcal O(m)$ is ample (resp. nef) on ${\rm P}(\mathbb E)$.
\end{defi}

\begin{rem} With the terminology of \cite{rosstho09}, saying that $\mathbb E^\ast$ is ample amounts to saying that $\mathcal O(1)$ is orbi-ample on ${\rm P}_X(\mathbb E)$, the tautological orbifold line bundle being locally ample by \cite{dol82}.
\end{rem}

We will see that the positivity properties of weighted vector bundles are exactly similar to the ones of the usual vector bundles, and can be proved in the same manner, following \cite{lazpos2}.

\begin{prop} \label{ampleprop} Assume that $E_1^\ast$, ..., $E_p^\ast$ are ample on $X$. Then,
\begin{enumerate}
\item For any coherent sheaf $\mathcal F$ on $X$, there exists $m_1 \in \mathbb N$ such that, for any $m \geq m_1$, the sheaf 
$$
\mathcal F \otimes \left( \bigoplus_{a_1 l_1 + ... + a_p l_p = m} S^{l_1} E_1^\ast \otimes ... S^{l_p} E_p^\ast \right)
$$
is globally generated.
\item For any ample divisor $H$ on $X$, there exists $m_2 \in  \mathbb N$ such that for any $m \geq m_2$, the sheaf
$$
\bigoplus_{a_1 l_1 + ... + a_p l_p = m} S^{l_1} E_1^{\ast} \otimes ... \otimes S^{l_p} E_p^{\ast}
$$
is a quotient of a direct sum of copies of $\mathcal O_X(H)$. 
\item If $\mathrm{lcm}(a_1, ..., a_p) | m$, then $\mathcal O(m)$ is ample on ${\rm P}(E_1^{(a_1)} \oplus ... \oplus E_p^{(a_p)})$. In particular, $\mathcal O(1)$ is orbi-ample.
\end{enumerate}
\end{prop}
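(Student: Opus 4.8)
The plan is to derive the three statements, in order, from the classical theory of ample vector bundles (as in \cite{lazpos2}) applied to the \emph{unweighted} direct sum $E^\ast := E_1^\ast \oplus \dots \oplus E_p^\ast$, which is itself an ample bundle on $X$ because each $E_i^\ast$ is. The bridge between the weighted and unweighted worlds is the decomposition
$$
S^{m'} E^\ast = \bigoplus_{l_1 + \dots + l_p = m'} S^{l_1} E_1^\ast \otimes \dots \otimes S^{l_p} E_p^\ast ,
$$
which shows that each summand $S^{l_1} E_1^\ast \otimes \dots \otimes S^{l_p} E_p^\ast$ appearing in the weighted sum of degree $m$ (that is, with $\sum_i a_i l_i = m$) is a direct summand of $S^{m'} E^\ast$ for the unweighted degree $m' := \sum_i l_i$. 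The numerical fact driving the argument is the inequality $m' \geq m / \max_i a_i$, so that $m \to \infty$ forces $m' \to \infty$, \emph{uniformly} over all admissible $(l_1, \dots, l_p)$.

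For (1), I would first invoke the standard global generation theorem for the ample bundle $E^\ast$: given the coherent sheaf $\mathcal F$, there is an $m_1$ such that $\mathcal F \otimes S^{m'} E^\ast$ is globally generated for every $m' \geq m_1$. Setting $m^\dagger := m_1 \cdot \max_i a_i$, any summand occurring in the weighted sum of degree $m \geq m^\dagger$ has unweighted degree $m' \geq m_1$, hence is a direct summand of the globally generated sheaf $\mathcal F \otimes S^{m'} E^\ast$. Since a direct summand of a globally generated sheaf is globally generated and global generation of a finite direct sum is equivalent to that of each of its factors, the full sheaf in (1) is globally generated for all $m \geq m^\dagger$.

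Statement (2) is then the instance $\mathcal F = \mathcal O_X(-H)$ of (1): global generation of $\mathcal O_X(-H) \otimes (\text{weighted sum})$ means it is a quotient of some $\mathcal O_X^{\oplus N}$, and twisting that surjection by $\mathcal O_X(H)$ exhibits the weighted sum itself as a quotient of $\mathcal O_X(H)^{\oplus N}$. For (3), fix a very ample $H$ and an $m_0$, divisible by $\lcm(a_1, \dots, a_p)$ and large enough that (2) applies and that $\mathcal O(m_0)$ is relatively globally generated over $X$. The relative evaluation $p^\ast p_\ast \mathcal O(m_0) \to \mathcal O(m_0)$ is then surjective on ${\rm P}(\mathbb E)$, and composing it with the pullback of the surjection from (2) shows that $\mathcal O(m_0)$ is a quotient of $(p^\ast \mathcal O_X(H))^{\oplus N}$. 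It follows that $\mathcal O(m_0)$ is globally generated (being the tensor product of the globally generated $p^\ast \mathcal O_X(H)$ with the globally generated $\mathcal O(m_0) \otimes p^\ast \mathcal O_X(-H)$) and strictly positive on every curve $C$: if $C$ lies in a fibre then $\mathcal O(m_0)$ restricts to an ample bundle on the weighted projective fibre, while if $C$ dominates a curve $\bar C \subset X$ then $\mathcal O(m_0) \cdot C \geq p^\ast \mathcal O_X(H) \cdot C = H \cdot \bar C > 0$, the first factor being nef. A globally generated line bundle that is positive on every curve defines a finite morphism and is therefore ample; hence $\mathcal O(m_0)$ is ample, and so are $\mathcal O(\lcm(a_i))$ and all of its multiples, which is precisely the orbi-ampleness of $\mathcal O(1)$.

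The genuine points requiring care are the uniformity in (1) — that one threshold $m^\dagger$ governs all partitions $(l_1, \dots, l_p)$ simultaneously, which is exactly what the estimate $m' \geq m / \max_i a_i$ secures — and the ampleness criterion used in (3), where one must verify that $\mathcal O(m_0)$ is not merely nef but contracts no curve, so that the morphism it defines is finite. The remaining orbifold bookkeeping is mild, since for $\lcm(a_i) \mid m$ the bundle $\mathcal O(m)$ is an honest line bundle on the coarse space, and ampleness there, together with the local ampleness already established, yields orbi-ampleness.
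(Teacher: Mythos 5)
Your proof is correct, but it takes a genuinely different route from the paper's at two of the three points. For point 1, the paper argues by induction on $p$ using the standard characterization of ample vector bundles (global generation of $\mathcal F \otimes S^{m'}E$ for $m'$ large); you instead observe that each graded piece $S^{l_1}E_1^\ast \otimes \dots \otimes S^{l_p}E_p^\ast$ with $\sum_i a_i l_i = m$ is a direct summand of $S^{m'}(E_1^\ast \oplus \dots \oplus E_p^\ast)$ with $m' = \sum_i l_i \geq m/\max_i a_i$, so a single application of that characterization to the ample bundle $E_1^\ast \oplus \dots \oplus E_p^\ast$ suffices. This is cleaner, avoids the induction, and makes the uniformity over the partitions $(l_1, \dots, l_p)$ explicit. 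Point 2 is handled identically in both arguments. For point 3 the routes genuinely diverge: the paper upgrades the surjection of point 2 to a surjection of graded $\mathcal O_X$-algebras by means of Lemma \ref{lemsurj}, hence to a closed embedding ${\rm P}(E_1^{(a_1)} \oplus \dots \oplus E_p^{(a_p)}) \hookrightarrow {\rm P}(\mathcal O_X(-H)^{\oplus N})$ under which $\mathcal O(m)$ is the restriction of an ample tautological bundle, whereas you show that $\mathcal O(m_0)$ is globally generated and strictly positive on every curve (treating separately curves in fibres and curves dominating a curve of $X$), so that the induced morphism is finite and $\mathcal O(m_0)$, hence $\mathcal O(\lcm(a_1,\dots,a_p))$ and all its multiples, is ample. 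Both arguments consume the same fibrewise input from \cite{dol82}: the paper through the very-ampleness of $\mathcal O(m)$ on a weighted projective space that underlies Lemma \ref{lemsurj}, you through relative global generation and fibrewise ampleness of $\mathcal O(m_0)$. Both are complete; the paper's version yields an actual projective embedding as a by-product, while your Nakai-type argument is somewhat more elementary, its only delicate step --- which you correctly flag --- being the verification that no curve is contracted.
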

\begin{proof}1. This is easy to prove by induction on $p$ using the similar characterization of ample vector bundles.

2. It suffices to apply the point 1. to the sheaf $\mathcal F =\mathcal O(-H)$. 

3. Because of 2., there exist $m, N \in \mathbb N$ and a surjective morphism
$$
\mathcal O_{X} (H)^{\oplus N} \longrightarrow \bigoplus_{a_1 l_1 + ... a_p l_p = m} S^{l_1} E_1^\ast \otimes ... \otimes S^{l_p} E_p^\ast.
$$

Besides, because of Lemma \ref{lemsurj}, increasing $m$ if necessary, we can suppose that for any $q \in \mathbb N$, the following natural morphism of vector bundles on $X$ is surjective:
$$
S^{q} \left[ \bigoplus_{a_1 l_1 + ... a_p l_p = m} S^{l_1} E_1^\ast \otimes ... \otimes S^{l_p} E_r^\ast \right] \longrightarrow \bigoplus_{a_1 l_1 + ... + a_p l_p = mq} S^{l_1} E_1^\ast \oplus ... \oplus S^{l_p} E_p^\ast. 
$$

We obtain a surjective morphism of graded $\mathcal O_X$-algebras
$$
\bigoplus_{q \geq 0} S^q \left( \mathcal O_X(H)^{\oplus N} \right) \longrightarrow \bigoplus_{q \geq 0} \left[ \bigoplus_{a_1 l_1 + ... + a_p l_p = mq} S^{l_1} E_1^\ast \otimes ... \otimes S^{l_p} E_p^\ast \right].
$$
which determines an embedding ${\rm P}\left(E_1^{(a_1)} \oplus ... \oplus E_p^{(a_p)} \right) \hookrightarrow {\rm P}\left( \mathcal O_X(-H)^{\oplus N}\right)$, with 
$$
\mathcal O_{{\rm P}(\mathcal O_X(-H)^{\oplus N})}(1) |_{{\rm P}(E_1^{(a_1)} \oplus ... \oplus E_p^{(a_p)})} \cong \mathcal O(qm).
$$
Since the tautological line bundle on ${\rm P}(\mathcal O_X(-H)^{\oplus N})$ is ample (cf. \cite{lazpos2}), this ends the proof. 
\end{proof}

\begin{lem} \label{lemsurj}
Let $E_1, ..., E_p$ be $\mathbb C$-vector spaces, and let $a_1, ..., a_p \in \mathbb N^\ast$. Then, for any $m \in \mathbb N$ divisible enough, the natural linear maps
$$
S^q \left[ \bigoplus_{a_1 l_1 + ... + a_p l_p = m} S^{a_1} E_1 \otimes ... \otimes S^{a_p} E_p \right] \longrightarrow \bigoplus_{a_1 l_1 + ... + a_p l_p = mq} S^{a_1} E_1 \otimes ... \otimes S^{a_p} E_p
$$
are onto for all $q \geq 1$.
\end{lem}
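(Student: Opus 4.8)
The plan is to recognize the displayed maps as expressing degree-one generation of a Veronese subalgebra, and then to reduce that generation statement to a purely numerical fact about lattice points.

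Set $R = \mathrm{Sym}(E_1 \oplus \cdots \oplus E_p) = \mathrm{Sym}\, E_1 \otimes \cdots \otimes \mathrm{Sym}\, E_p$, graded by assigning the weight $a_i$ to $E_i$; its degree-$d$ part is then $R_d = \bigoplus_{\langle a,l\rangle = d} S^{l_1} E_1 \otimes \cdots \otimes S^{l_p} E_p$, where I write $\langle a,l\rangle := a_1 l_1 + \cdots + a_p l_p$. The map to be shown surjective is exactly the multiplication map $S^q R_m \longrightarrow R_{mq}$; equivalently, I must prove that the $m$-th Veronese subalgebra $R^{(m)} = \bigoplus_{q \geq 0} R_{mq}$ is generated in degree one, for $m$ divisible enough.

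First I reduce to combinatorics. For an exponent vector $l$ with $\langle a,l\rangle = d$, write $R[l] = S^{l_1} E_1 \otimes \cdots \otimes S^{l_p} E_p$, so that $R_d = \bigoplus_{\langle a,l\rangle = d} R[l]$. The multiplication respects this decomposition: it sends $R[l^{(1)}] \otimes \cdots \otimes R[l^{(q)}]$ into the single component $R[l^{(1)} + \cdots + l^{(q)}]$, and since multiplication in a polynomial ring is onto in each degree, i.e. $S^{c} E \cdot S^{c'} E = S^{c+c'} E$, this map is in fact \emph{surjective} onto that component. Hence the image of $S^q R_m \to R_{mq}$ is the sum of the components $R[n]$ for which $n$ can be written as a sum of $q$ vectors of $B_m := \{\, l \in \mathbb{N}^p : \langle a,l\rangle = m \,\}$. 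Therefore the maps are onto for all $q$ if and only if, for every $q \geq 1$ and every $n \in \mathbb{N}^p$ with $\langle a,n\rangle = mq$, one can write $n = l^{(1)} + \cdots + l^{(q)}$ with each $l^{(j)} \in B_m$.

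It remains to establish this numerical statement for $m$ a sufficiently large multiple of $L := \lcm(a_1, \ldots, a_p)$, which I would do by induction on $q$, the case $q = 1$ being trivial. For $q \geq 2$ it is enough to peel off one admissible vector, namely to find $l \in B_m$ with $l \leq n$ componentwise: then $n - l \in \mathbb{N}^p$ has weight $m(q-1)$ and the induction hypothesis applies. To produce such an $l$, I start from the balanced choice $l^0_i = \lfloor n_i / q \rfloor \leq n_i$, whose weight satisfies $m - \sum_i a_i < \langle a, l^0 \rangle \leq m$; the shortfall $\delta := m - \langle a, l^0 \rangle$ is a bounded nonnegative multiple of $g := \gcd(a_i)$, while the remaining stock $n - l^0$, of weight at least $m(q-1) \geq m$, supplies ample material to raise the weight back up to $m$. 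The delicate point — and the main obstacle — is to hit the weight $m$ \emph{exactly}, i.e. to correct the bounded defect $\delta$ on the nose: this is where one invokes that the numerical semigroup generated by $a_1, \ldots, a_p$ contains every sufficiently large multiple of $g$ (the Frobenius bound), using that for $q \geq 2$ at least two coordinates carry enough mass to shift the weight freely across the residues modulo $g$. The genuine mixing between the factors $\mathrm{Sym}\, E_i$, and hence the need for this semigroup input, only arises for $p \geq 3$; for $p \leq 2$ the decomposition can be produced directly. Choosing $m$ to be a multiple of $L$ exceeding the resulting bound then yields the claim for all $q$, and with it the surjectivity of all the maps in the statement.
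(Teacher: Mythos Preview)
Your reduction to the lattice-point statement is correct and cleanly explained: surjectivity of $S^q R_m \to R_{mq}$ is equivalent to every $n \in \mathbb N^p$ with $\langle a,n\rangle = mq$ being a sum of $q$ vectors from $B_m$. This is a genuinely different route from the paper. The paper argues geometrically: $R_d = H^0(\mathbb P, \mathcal O(d))$ where $\mathbb P = {\rm P}(E_1^{\ast\,(a_1)} \oplus \cdots \oplus E_p^{\ast\,(a_p)})$, and since $\mathcal O(m)$ is very ample for $m$ large and divisible by all the $a_i$ (Dolgachev), the standard fact that the section ring of an ample line bundle is generated in degree one after a high enough Veronese finishes the proof in two lines. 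Your approach trades this black box for an elementary combinatorial problem, which is appealing --- but the combinatorial problem is not as soft as your sketch suggests.

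The gap is in the correction step. You observe that $\delta = m - \langle a, l^0\rangle$ is a multiple of $g$ bounded by $\sum_i a_i$, and then invoke the Frobenius bound. But the Frobenius bound tells you that every \emph{sufficiently large} multiple of $g$ lies in the semigroup $\langle a_1,\ldots,a_p\rangle$, while $\delta$ is \emph{small}. Take $a=(2,3,5)$, $m=30$, $q=2$, $n=(13,8,2)$: then $l^0=(6,4,1)$ and $\delta=1$, which is not representable as a nonnegative combination of $2,3,5$; so no $v \leq n-l^0$ with $\langle a,v\rangle = \delta$ exists. An actual correction here must \emph{subtract} from some coordinate of $l^0$ and add elsewhere, and then one has to control simultaneously the lower bound $l\geq 0$ and the upper bound $l\leq n$. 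Your phrase ``at least two coordinates carry enough mass to shift the weight freely across the residues modulo $g$'' does not address this: the stock $n-l^0$ can be concentrated in a single coordinate, and even when it is not, $\gcd(a_i,a_j)$ for a specific pair need not equal $g$, so two coordinates may not suffice to realise an arbitrary residue. What you actually need is that the simplex $\Delta_m = \{x\in\mathbb R_{\geq 0}^p : \langle a,x\rangle = m\}$ has the integer decomposition property for $m$ a large multiple of $L$; this is true (it follows from the theorem of Ewald--Wessels / Bruns--Gubeladze--Trung that $kP$ is normal for any lattice polytope $P$ once $k \geq \dim P - 1$), but it is a nontrivial result that your sketch does not reprove. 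Either cite it, or give a complete direct argument for simplices; as written, the inductive step is not established.
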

\begin{proof}
Because of \cite{dol82}, if $m$ is sufficiently large and divisible by all $a_1, ..., a_r$, the (standard) line bundle  $\mathcal O(m)$ on the weighted projective space ${\rm P}_{\mathrm{pt}}(E_1^\ast {}^{(a_1)} \oplus ... \oplus E_p^\ast {}^{(a_p)} )$ is very ample. Consequently, there exists an integer $q \in \mathbb N$ such that $S^p H^0( \mathcal O(mq) ) \longrightarrow H^0(\mathcal O(mqp))$ is onto for all $p \geq 1$, which gives the result.
\end{proof}

We will now study the case of nef line bundles. We will prove the following result.
 
\begin{prop} \label{propnef}
Let $E_1^{(a_1)} \oplus ... \oplus E_p^{(a_p)}$ be a weighted direct sum. Assume that $E_1^\ast, ..., E_p^\ast$ are nef. Then, if $m$ is sufficiently divisible, the line bundle $\mathcal O(m)$ is nef on ${\rm P}\left(E_1^{(a_1)} \oplus ... \oplus E_p^{(a_p)}\right)$.
\end{prop}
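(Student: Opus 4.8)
The plan is to reduce the nef case to a limiting argument applied to the ample case of Proposition \ref{ampleprop}, exactly as one does in the classical theory (cf. \cite{lazpos2}). The guiding principle is that nefness is the closure of ampleness: a line bundle is nef if and only if small positive perturbations by an ample class yield ample bundles. So the first step is to fix an ample divisor $H$ on $X$ and to consider, for each $\varepsilon > 0$, the ``twisted'' weighted bundle obtained by adding a small multiple of $H$ to each factor $E_i^\ast$. Concretely, if $E_1^\ast, \dots, E_p^\ast$ are nef, then for every positive integer $s$ the bundles $E_i^\ast \otimes \mathcal O_X(H)$ (or more precisely $S^{a_i} E_i^\ast \otimes \mathcal O_X(H)$ after accounting for the weights) are ample, since a nef bundle tensored with an ample line bundle is ample.

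The key step is then to relate the tautological bundle $\mathcal O(1)$ on ${\rm P}(E_1^{(a_1)} \oplus \dots \oplus E_p^{(a_p)})$ to the tautological bundle on the perturbed weighted bundle. Here I would use the description of sections: the pushforward $q_\ast \mathcal O(m)$ is the sheaf $\bigoplus_{a_1 l_1 + \dots + a_p l_p = m} S^{l_1} E_1^\ast \otimes \dots \otimes S^{l_p} E_p^\ast$ appearing in Proposition \ref{ampleprop}. Perturbing each $E_i^\ast$ by $\mathcal O_X(H)$ twists this graded sheaf, and on the projectivization it corresponds to replacing $\mathcal O(m)$ by $\mathcal O(m) \otimes q^\ast \mathcal O_X(\,\cdot\, H)$ for an appropriate multiple of $H$ determined by the weights. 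Applying point 3 of Proposition \ref{ampleprop} to the ample weighted bundle $\bigl(E_1^\ast \otimes \mathcal O_X(H)\bigr)^{(a_1)} \oplus \dots \oplus \bigl(E_p^\ast \otimes \mathcal O_X(H)\bigr)^{(a_p)}$ shows that $\mathcal O(m) \otimes q^\ast \mathcal O_X(cH)$ is ample for all sufficiently divisible $m$, where $c$ is a fixed constant. Letting the perturbation shrink (i.e.\ replacing $H$ by $\tfrac{1}{N} H$ and passing to the limit in the Néron--Severi space with real coefficients), one concludes that $\mathcal O(m)$ itself lies in the closure of the ample cone, hence is nef.

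The main obstacle I anticipate is bookkeeping with the weights when twisting: tensoring $E_i^\ast$ by $\mathcal O_X(H)$ interacts with the weighted grading in a nonuniform way, since the factor $S^{l_i} E_i^\ast$ picks up $\mathcal O_X(l_i H)$ rather than a fixed twist, so the resulting twist on $\mathcal O(m)$ depends on the partition $(l_1, \dots, l_p)$ and is not simply a pullback from $X$. Making this precise requires either working directly on the quotient stack presentation ${\rm P}(\mathbb E) = [\,(E_1 \oplus \dots \oplus E_p)/\mathbb C^\ast\,]$ and tracking the induced twist on $\mathcal O(1)$ fiberwise, or else arranging the perturbation so that each factor is twisted by $\mathcal O_X\bigl(\tfrac{a_i}{\lcm} H\bigr)$ so that the total twist on $\mathcal O(m)$ becomes the clean pullback $q^\ast \mathcal O_X(m \cdot \tfrac{1}{\lcm} H)$. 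Once the twist is organized uniformly, the reduction to the ample case and the limiting argument are routine, and the conclusion follows from the openness of the ample cone together with the fact that nefness is detected after restriction to curves.
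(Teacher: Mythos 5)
Your proposal is correct and follows essentially the same route as the paper: perturb by an ample class, apply the ample case, and conclude by a limiting argument, with the key observation (which you correctly identify and resolve) that the twist on the $i$-th factor must be taken proportional to the weight $a_i$ so that the induced twist on $\mathcal O(m)$ is the uniform pullback $\pi^\ast\mathcal O_X(m\delta)$. The only step you leave implicit --- making sense of ampleness for the resulting $\mathbb Q$-twisted weighted sum --- is exactly what the paper's Lemma \ref{lemamplepoids} supplies via the Bloch--Gieseker covering trick from \cite{lazpos2}, the reference you already invoke.
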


To this aim, we will use the formalism of vector bundles twisted by rational classes (see \cite{lazpos2} for the definition and the positivity properties of these objects). As in the weightless case, we naturally define the notion of ampleness for a weighted sum of twisted vector bundles:

\begin{defi} \label{defiampletwist} We say that a weighted direct sum of twisted vector bundles of the form
$$
E_1<a_1 \delta>^{(a_1)} \oplus ... \oplus E_p<a_p \delta>^{(a_p)}
$$ is \emph{ample}, if for any $m$, divisible by $\mathrm{lcm}(a_1, ..., a_p)$ the $\mathbb Q$-line bundle $\mathcal O(m) \otimes \pi^\ast \mathcal O_X\left(m \delta \right)$ is ample on ${\rm P} \left(E_1^\ast {}^{(a_1)} \oplus ... E_p^\ast {}^{(a_p)} \right)$. 
\end{defi}

\begin{rem} \label{remtwists} In Definition \ref{defiampletwist}, we consider twists of the form $a_1 \delta, ..., a_r \delta$ with $\delta \in N^1 (X)_{\mathbb Q}$. This is related to the fact that if $E_1, ..., E_r$ are vector bundles, and if $L$ is a line bundle, we have, for any $m$,
\begin{align*}
\bigoplus_{a_1 l_1 + ... + a_r l_r = m} S^{l_1} (E_1^\ast \otimes L^{\otimes a_1}) \otimes ... & \otimes (E_r^\ast \otimes L^{\otimes a_r}) = \\
&  L^{\otimes m} \otimes \bigoplus_{a_1 l_1 + ... + a_r l_r = m} S^{l_1} E_1^\ast \otimes ... \otimes E_r^\ast,
\end{align*}
which implies in particular that the weighted projective bundle $P' = \\ {\rm P}\left((E_1 \otimes L^{\ast} {}^{\otimes a_1}) ^{(a_1)} \otimes ... \otimes (E_p \otimes L^{\ast} {}^{\otimes a_p})^{(a_p)}\right)$ is identified to the variety $P = {\rm P}(E_1^{(a_1)} \oplus ... \oplus E_r^{(a_r)})$, with $\mathcal O_{P'}(m) \cong \mathcal O_{P}(m) \otimes p^\ast L^{\otimes m}$.
\end{rem}
 
\begin{lem} \label{lemamplepoids} Let $E_1<a_1 \delta>, ..., E_r<a_r \delta>$ be twisted vector bundles on $X$. Assume that each $E_i^\ast<-a_i \delta>$ is ample. Then the weighted direct sum
$$
E_1^\ast <- a_1 \delta>^{(a_1)} \oplus ... \oplus E_r^\ast <- a_r \delta>^{(a_r)}
$$ is ample. 
\end{lem}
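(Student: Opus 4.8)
The plan is to recognize this statement as the $\mathbb{Q}$-twisted counterpart of point 3 of Proposition \ref{ampleprop}, and to deduce it from that untwisted result by absorbing the twist into the vector bundles through Remark \ref{remtwists}. First I would unwind Definition \ref{defiampletwist}: matching the pattern $E_i<a_i\delta>^{(a_i)}$ of the definition with our $E_i^\ast<-a_i\delta>^{(a_i)}$ (that is, replacing the definition's ``$E_i$'' by $E_i^\ast$ and its ``$\delta$'' by $-\delta$), ampleness of $E_1^\ast<-a_1\delta>^{(a_1)} \oplus ... \oplus E_r^\ast<-a_r\delta>^{(a_r)}$ amounts to the statement that, for every $m$ divisible by $\lcm(a_1, ..., a_r)$, the $\mathbb{Q}$-line bundle $\mathcal O_P(m) \otimes \pi^\ast \mathcal O_X(-m\delta)$ is ample on $P = {\rm P}(E_1^{(a_1)} \oplus ... \oplus E_r^{(a_r)})$, where $\pi$ is the projection.

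I would first dispose of the model case where $\delta = c_1(L)$ is an integral class. There the hypothesis ``$E_i^\ast<-a_i\delta>$ ample'' is just the honest ampleness of $E_i^\ast \otimes L^{-a_i} = (E_i \otimes L^{a_i})^\ast$. Setting $G_i := E_i \otimes L^{a_i}$, an application of Remark \ref{remtwists} with the line bundle $L^{-1}$ identifies ${\rm P}(G_1^{(a_1)} \oplus ... \oplus G_r^{(a_r)})$ with $P$ and carries its tautological bundle $\mathcal O(m)$ precisely onto $\mathcal O_P(m) \otimes \pi^\ast L^{-m} = \mathcal O_P(m) \otimes \pi^\ast \mathcal O_X(-m\delta)$. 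Since each dual $G_i^\ast = E_i^\ast \otimes L^{-a_i}$ is ample by assumption, point 3 of Proposition \ref{ampleprop} applies to the weighted sum $G_1^{(a_1)} \oplus ... \oplus G_r^{(a_r)}$ and yields that $\mathcal O(m)$, hence $\mathcal O_P(m) \otimes \pi^\ast \mathcal O_X(-m\delta)$, is ample for $m$ divisible by $\lcm(a_i)$. This settles the integral case with no positivity input beyond Proposition \ref{ampleprop}.

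For a general rational class $\delta \in N^1(X)_{\mathbb Q}$ I would pass to the formalism of $\mathbb{Q}$-twisted bundles of \cite{lazpos2}. Choosing $N$ with $N\delta = c_1(L)$ integral, I note that all the identifications of the previous paragraph hold verbatim at the level of $\mathbb{Q}$-divisor classes, ampleness of a $\mathbb{Q}$-line bundle being defined via honest positive multiples. Concretely, it suffices to test ampleness of $\mathcal O_P(m) \otimes \pi^\ast \mathcal O_X(-m\delta)$ along the cofinal set of $m$ divisible by $N\,\lcm(a_i)$ (the classes for distinct such $m$ being positive multiples of one another), and for those $m$ the bundle is the honest one handled in the integral step.

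I expect the only real difficulty to be bookkeeping rather than substance: keeping the geometric (dualizing) convention for ${\rm P}(-)$ consistent so that the signs of the twists agree across the hypothesis $E_i^\ast<-a_i\delta>$, the identification of Remark \ref{remtwists}, and Definition \ref{defiampletwist}; and checking that the reduction through the denominator $N$ is legitimate, i.e. that Proposition \ref{ampleprop}(3) and Remark \ref{remtwists} genuinely extend to $\mathbb{Q}$-twists. Once the integral case is set up with the correct conventions, the rational case follows formally.
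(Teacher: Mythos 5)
Your integral case is sound, and it is exactly the mechanism the paper uses once it has reduced to that situation: absorb the twist into the bundles via Remark \ref{remtwists} and invoke point 3 of Proposition \ref{ampleprop}. The genuine gap is your claim that the rational case ``follows formally'' from the integral one. When $\delta \in N^1(X)_{\mathbb Q}$ is not integral, the classes $a_i\delta$ are in general not integral either, so the bundles $G_i = E_i \otimes L^{a_i}$ simply do not exist as honest vector bundles on $X$, and there is nothing to feed into Proposition \ref{ampleprop}. Restricting to $m$ divisible by $N\,\lcm(a_i)$ only makes the \emph{output} $\mathcal O_P(m)\otimes\pi^\ast\mathcal O_X(-m\delta)$ an honest line bundle; it does not make the \emph{inputs} of Proposition \ref{ampleprop} honest, and that proposition is proved through global generation statements about genuine coherent sheaves of the form $S^{l_1}E_1^\ast\otimes\cdots\otimes S^{l_p}E_p^\ast$, which have no automatic $\mathbb Q$-twisted analogue. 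So the one step you flag as ``bookkeeping rather than substance'' is precisely where the substance lies. (And the rational case cannot be dispensed with: in the proof of Proposition \ref{propnef} the lemma is applied to arbitrarily small rational ample twists.)

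The paper closes this gap with a geometric input absent from your argument: the Bloch--Gieseker covering theorem (\cite[Theorem 4.1.10]{lazpos2}). One chooses a finite, flat, surjective $f: Y \to X$ and a divisor $A$ on $Y$ with $f^\ast \delta \equiv A$ integral; then each $f^\ast E_i^\ast \otimes \mathcal O_Y(-a_i A)$ is an honest ample vector bundle (ampleness of twisted bundles is preserved under finite pullback), your integral argument applies verbatim on $Y$ via Remark \ref{remtwists} and Proposition \ref{ampleprop}, and ampleness descends along the induced finite surjective map $g$ between the weighted projective bundles. Without this covering step (or a full reworking of Proposition \ref{ampleprop} in the $\mathbb Q$-twisted setting), your proof only establishes the lemma for integral $\delta$.
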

\begin{proof}
We follow directly the proof presented in \cite{lazpos2}. Because of Bloch-Gieseker theorem about ramified covers (see \cite[Theorem 4.1.10]{lazpos2}), there exist a finite, surjective, flat morphism $f : Y \longrightarrow X$, where $Y$ is a variety, and a divisor $A$ such that $f^\ast \delta \equiv_{lin} A$. We have a fibered diagram
$$
\begin{diagram}
P' = {\rm P}_Y\left(f^\ast E_1^\ast {}^{(a_1)} \oplus ... \oplus f^\ast E_r^\ast {}^{(a_r)} \right) & \rTo^{g} & {\rm P}_X(E_1^\ast {}^{(a_1)} \oplus ... \oplus E_r^\ast {}^{(a_r)}) = P\\
 \dTo & & \dTo \\
Y & \rTo^f & X
\end{diagram}.
$$
Let $Q = {\rm P}_Y \left(\left(f^\ast E_1^\ast \otimes \mathcal O(a_1 A) \right) {}^{(a_1)} \oplus ... \oplus \left( f^\ast E_r^\ast \otimes \mathcal O(a_r A) \right) {}^{(a_r)} \right)$. Then, we have a canonical identification $Q \cong P'$, which leads to identifying the line bundle $\mathcal O_Q(m)$ with $\mathcal O_{P'}(m) \otimes \pi_Y^\ast\mathcal O_Y(m A)$, as mentioned in Remark \ref{remtwists}.

Besides, the $\mathbb Q$-line bundle
\begin{equation} \label{fibredroiteseq}
g^\ast \left( \mathcal O_P(m) \otimes \pi^\ast \mathcal O_X\left(m \delta \right) \right)
\end{equation} is canonically identified to $\mathcal O_{P'}(m) \otimes \pi_Y^\ast\mathcal O_Y( m A)$, thus to $\mathcal O_Q(m)$. However, since each $E_i^\ast<- a_i \delta>$ is ample, and since $f$ is finite, each vector bundle  $f^\ast E_i^\ast \otimes \mathcal O(-a_i A)$ is ample. Because of Proposition \ref{ampleprop}, the line bundle $\mathcal O_Q(m)$ is ample, so the line bundle \eqref{fibredroiteseq} is ample. But $g$ is finite and surjective, so $\mathcal O_P(m) \otimes \pi^\ast \mathcal O_X\left(m \delta \right)$ is ample on $P$, which gives the result.
\end{proof}

\begin{proof} [Proof of Proposition \ref{propnef}] 
It suffices to show that for any ample class $h \in N_1(X)_{\mathbb Q}$, the class $\mathcal O(m) \otimes \pi^\ast \mathcal O(m h)$ is ample. Let $h$ be such a class.
Then since each $E_i^\ast$ is nef, the twisted vector bundles $E_i^\ast < a_i h>$ are ample for any $i$. Consequently, by Lemma \ref{lemamplepoids} and Definition \ref{defiampletwist}, if $\mathrm{lcm} (a_1, ..., a_r) | m$, the line bundle $\mathcal O(m) \otimes \pi^\ast \mathcal O_X (m h)$ is ample on ${\rm P}(E_1^\ast {}^{(a_1)} \oplus ... \oplus E_r^\ast {}^{(a_r)})$. This gives the result.
\end{proof}

\subsection{An example of combinatorial application}

We present a simple example of application of the previous discussion, which will turn out to be useful in Section \ref{applicationcompactifications}, where we deal with jet bundles on a toroidal compactification of a quotient of the ball.

\begin{prop} \label{propcombmajorationapplication} Let $k,n \in \mathbb N$. We have the following asymptotic upper bound, as $r \longrightarrow + \infty$ :
\begin{align*}
\sum_{j_1 + 2 j_2 + ... + k j_k = r} \frac{(j_1 + ... + j_k)^n}{n!} \leq  \frac{1}{k!} & \left[ \sum_{1 \leq i_1 \leq ... \leq i_n \leq k} \frac{1}{i_1 \; ... \; i_n}\right] \frac{r^{n + k -1}}{(n+k-1)!}  \\
& + O(r^{n +k -2}).
\end{align*}
\end{prop}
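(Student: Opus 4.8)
The plan is to reduce the weighted lattice sum on the left-hand side to a finite combination of \emph{monomial} lattice sums, each of which is the coefficient of an explicit rational generating function, and then to read off the asymptotics from its pole structure at $x=1$.

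First I would expand the summand by the multinomial theorem: with $s=j_1+\dots+j_k$ one has $\frac{s^n}{n!}=\sum_{a_1+\dots+a_k=n}\prod_{i=1}^k\frac{j_i^{a_i}}{a_i!}$, so the left-hand side becomes $\sum_{a_1+\dots+a_k=n}\frac{1}{\prod_i a_i!}\sum_{\sum_i i j_i=r}\prod_{i=1}^k j_i^{a_i}$, a sum over the finitely many (at most $\binom{n+k-1}{k-1}$) weak compositions $a$ of $n$. For the upper bound I would invoke the elementary inequality $j^{a}\le a!\binom{j+a}{a}$, valid for all integers $j,a\ge 0$, which turns each inner sum into $\prod_i a_i!\sum_{\sum_i i j_i=r}\prod_{i=1}^k\binom{j_i+a_i}{a_i}$ while, as the coefficient computation below confirms, costing nothing at top order.

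The point of this replacement is that $\sum_{\sum_i i j_i=r}\prod_i\binom{j_i+a_i}{a_i}$ is exactly $[x^r]\prod_{i=1}^k(1-x^i)^{-(a_i+1)}$, since $\sum_{j\ge 0}\binom{j+a}{a}x^{ij}=(1-x^i)^{-(a+1)}$. This is a rational function whose only singularities lie at roots of unity. Its pole at $x=1$ has order $\sum_i(a_i+1)=n+k$, and since $1-x^i\sim i(1-x)$ near $x=1$, the top Laurent coefficient is $\prod_i i^{-(a_i+1)}=\frac{1}{k!}\prod_i i^{-a_i}$ (using $\prod_{i=1}^k i=k!$). Standard transfer of rational-function coefficient asymptotics then gives
\[
[x^r]\prod_{i=1}^k(1-x^i)^{-(a_i+1)}=\frac{1}{k!}\Big(\prod_{i=1}^k i^{-a_i}\Big)\frac{r^{n+k-1}}{(n+k-1)!}+O(r^{n+k-2}).
\]

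The main obstacle is precisely the control of the remainder, i.e. checking that no other contribution reaches order $r^{n+k-1}$. The lower part of the Laurent expansion at $x=1$ manifestly yields $O(r^{n+k-2})$; and for a primitive $d$-th root of unity $\zeta$ with $d\ge 2$ only the factors with $d\mid i$ are singular, so the pole order there is $\sum_{d\mid i,\,1\le i\le k}(a_i+1)\le n+\lfloor k/d\rfloor\le n+\lfloor k/2\rfloor\le n+k-1$, contributing only $O(r^{n+k-2})$ (for $k=1$ there are no such roots and the claim is trivial). As there are finitely many compositions $a$, the remainders combine into a single $O(r^{n+k-2})$. Reassembling, the factors $\prod_i a_i!$ cancel, leaving the bound $\frac{1}{k!}\big(\sum_{a_1+\dots+a_k=n}\prod_i i^{-a_i}\big)\frac{r^{n+k-1}}{(n+k-1)!}+O(r^{n+k-2})$. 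It then remains only to identify the bracket: $\sum_{a_1+\dots+a_k=n}\prod_i i^{-a_i}$ is the complete homogeneous symmetric polynomial $h_n(1,\tfrac12,\dots,\tfrac1k)$, whose monomial expansion is exactly $\sum_{1\le i_1\le\dots\le i_n\le k}\frac{1}{i_1\cdots i_n}$, which is the stated coefficient.
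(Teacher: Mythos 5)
Your argument is correct, and it takes a genuinely different route from the paper. The paper proves this proposition geometrically: it realizes the left-hand side as $h^0$ of a direct sum of powers of an ample line bundle $L$ on an $n$-dimensional abelian variety, embeds those sections into the orbifold sections of $\mathcal O(m)$ on ${\rm P}_X(L^{\ast\,(1)}\oplus\dots\oplus L^{\ast\,(k)})$, and then evaluates the top self-intersection of $c_1\mathcal O(1)$ via orbifold Riemann--Roch and the weighted Whitney formula of Proposition \ref{propformulesegre} — so the combinatorial coefficient $\sum_{i_1\le\dots\le i_n}\frac{1}{i_1\cdots i_n}$ appears as a Segre class of a weighted direct sum. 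Your proof is purely elementary: the multinomial expansion reduces the sum to monomial lattice sums, the inequality $j^a\le a!\binom{j+a}{a}$ converts each to a coefficient of the explicit rational function $\prod_i(1-x^i)^{-(a_i+1)}$, and the asymptotics follow from the pole structure (order $n+k$ at $x=1$ with leading Laurent coefficient $\frac{1}{k!}\prod_i i^{-a_i}$, and order at most $n+\lfloor k/2\rfloor\le n+k-1$ at the other roots of unity, for $k\ge 2$); the identification of $\sum_{a_1+\dots+a_k=n}\prod_i i^{-a_i}$ with $h_n(1,\tfrac12,\dots,\tfrac1k)$ recovers the stated coefficient. All the individual steps check out, including the pole-order bound at primitive $d$-th roots of unity and the cancellation of the factors $\prod_i a_i!$. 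What the paper's approach buys is coherence with the surrounding machinery (the same Segre-class formalism is used throughout Section \ref{applicationcompactifications}); what yours buys is self-containedness, no reliance on orbifold Riemann--Roch or Kodaira vanishing, and in fact a sharper statement, since the same pole analysis applied to $j^a\ge a!\binom{j}{a}$ would upgrade the inequality to an asymptotic equality. The only cosmetic caveat is your parenthetical claim that the binomial replacement ``costs nothing at top order'': this is true but not needed for the stated one-sided bound, so it should either be justified by the matching lower bound or dropped.
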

\begin{proof}
Let $X$ be an abelian variety of dimension $n$, endowed with an ample line bundle $L$. Because of Proposition \ref{ampleprop}, the weighted direct sum $L^{(1)} \oplus ... \oplus L^{(k)}$ is ample on $X$. This means that the orbifold line bundle $\mathcal O(1)$ is orbi-ample on $ P = {\rm P}_X \left(L^\ast {}^{(1)} \oplus ... \oplus L^{\ast} {}^{(k)} \right)$. By orbifold asymptotic Riemann-Roch theorem (\cite{toen99}, see also \cite{rosstho09}), we have then, for any $m \in \mathbb N$,
$$
h^0_{orb}( {\rm P}, \mathcal O(m)) \leq \int_{P} c_1 \mathcal O(1)^{n+k-1}\frac{m^{n+k-1}}{(n+k-1)!}+ O(m^{n+k-1}).
$$
However, because of Definition \ref{defisegre} and Proposition \ref{propformulesegre}, $\int_{X} c_1 \mathcal O(1)^{n+k-1}$ can be computed as 
\begin{align*}
\int_{P} c_1 \mathcal O(1)^{n+k-1} & = \int_{X} s_{n} \left(L^\ast {}^{(1)} \oplus ... \oplus L^\ast {}^{(k)}\right) \\
		 		& = \frac{1}{k!} \int_X \left\{ \left(\sum_{i} H^i \right) ... \left( \sum_{i} \frac{H^i}{l^i} \right) ... \left( \sum_{i} \frac{H^i}{k^i} \right) \right\}_n,
\end{align*}
where $H = c_1(L)$. Expending the computation yields
\begin{align*}
\int_{P} c_1 \mathcal O(1)^{n+k-1} & = \frac{(L^n)}{k!} \left[ \sum_{l_1 + ... + l_k = n} \frac{1}{1^{l_1} \; ... \; k^{l_k} }\right] \\
				& = \frac{(L^n)}{k!} \left[ \sum_{1 \leq i_1 \leq ... \leq i_n \leq k} \frac{1}{i_1 \; ... \; i_k} \right].
\end{align*}
To obtain the result, it suffices to remark that we can identify the vector space $$H^0(X, \bigoplus_{j_1 + 2 j_2 + ... + k j_k = m} L^{\otimes (j_1 + ... +j_k)})$$ to a subspace of the orbifold global sections of $\mathcal O(m)$. Thus :
$$
h^0 (X, \bigoplus_{j_1 + 2 j_2 + ... + k j_k = m} L^{\otimes (j_1 + ... + j_k)} ) \leq h^0_{orb}( P, \mathcal O(m)).
$$ 
Besides, a direct application of Riemann-Roch-Hirzebruch theorem and Kodaira vanishing theorem on $X$ gives
$$
h^{0} (X, L^{\otimes (j_1 + ... + j_k)}) = \frac{(j_1 + ... + j_k)^n}{n!} (L^n)
$$
if $j_1 + ... j_k \neq 0$. Combining all these equations, we get the inequality.
\end{proof}

We can also get back the following classical result.  
\begin{prop} \label{propestimeepoint} Let $a_0, ..., a_n \in \mathbb N^\ast$. Let $X = \mathbb P(a_0, ..., a_n)$ be the associated weighted projective space, endowed with its tautological orbifold line bundle $\mathcal O_X(1)$. We then have the asymptotic estimate
$$
h_{\mathrm{orb}}^0( X, \mathcal O_X(m)) = \frac{\mathrm{gcd}(a_0, ..., a_n)}{\prod_j a_j} \frac{m^n}{n!} + O(m^{n-1}).
$$
\end{prop}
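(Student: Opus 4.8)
The plan is to view $X = \mathbb{P}(a_0, \ldots, a_n)$ as the weighted projective bundle $\mathrm{P}_{\mathrm{pt}}(E_0^{(a_0)} \oplus \cdots \oplus E_n^{(a_n)})$ over a point, where each $E_i$ is a one-dimensional vector space, and to run the same argument as in Proposition \ref{propcombmajorationapplication}, but now over a zero-dimensional base and as an equality rather than an inequality. With this description one has $r = \sum_i \mathrm{rk}\, E_i - 1 = n$, so $\dim X = n$, and $\mathcal{O}_X(1)$ is the tautological orbifold line bundle of Definition \ref{defiprojbundle}.

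First I would apply the orbifold asymptotic Riemann-Roch theorem (\cite{toen99}, see also \cite{rosstho09}) to $\mathcal{O}_X(m)$. Since the trivial bundles $E_i^\ast$ on a point are (trivially) ample, Proposition \ref{ampleprop} shows that $\mathcal{O}_X(1)$ is orbi-ample; hence the higher orbifold cohomology of $\mathcal{O}_X(m)$ is negligible and
$$
h^0_{\mathrm{orb}}(X, \mathcal{O}_X(m)) = \left( \int_X c_1 \mathcal{O}_X(1)^n \right) \frac{m^n}{n!} + O(m^{n-1}).
$$

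It then remains to compute the top self-intersection $\int_X c_1\mathcal{O}_X(1)^n$. By Definition \ref{defisegre} applied with base a point and $\alpha = [\mathrm{pt}]$, this number is exactly the zeroth Segre class $s_0(E_0^{(a_0)} \oplus \cdots \oplus E_n^{(a_n)})$: indeed, with $\mu = \mathrm{lcm}(a_0, \ldots, a_n)$ one has $c_1\mathcal{O}(\mu)^n = \mu^n\, c_1\mathcal{O}(1)^n$, so the normalizing factor $1/\mu^n$ cancels and $s_0 = \int_X c_1\mathcal{O}(1)^n$. I would then evaluate $s_0$ using the Whitney formula of Proposition \ref{propformulesegre}: over a point each factor $s_\bullet(E_i^{(a_i)}) = \frac{1}{a_i^{\mathrm{rk}\,E_i - 1}}\sum_{j} \frac{s_j(E_i)}{a_i^j}$ reduces to its degree-zero part $1$ (since $\mathrm{rk}\, E_i = 1$ and $s_j(E_i) = 0$ for $j \geq 1$ on a zero-dimensional base), whence
$$
s_0\left(E_0^{(a_0)} \oplus \cdots \oplus E_n^{(a_n)}\right) = \frac{\mathrm{gcd}(a_0, \ldots, a_n)}{a_0 \cdots a_n}.
$$
Substituting this into the Riemann-Roch estimate yields the claimed asymptotic.

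The only genuinely delicate point is the passage to an equality: I must ensure that the orbi-ampleness of $\mathcal{O}_X(1)$ (rather than mere nefness) indeed forces the higher orbifold cohomology groups of $\mathcal{O}_X(m)$ to grow like $O(m^{n-1})$, so that $h^0_{\mathrm{orb}}$ agrees with the orbifold Euler characteristic up to lower order. This is the weighted-projective-space analogue of Serre/Kodaira vanishing and is precisely where the hypotheses of Proposition \ref{ampleprop} enter; everything else is the bookkeeping of the Segre-class computation, which degenerates pleasantly because the base is a point. As a sanity check, one notes that the factor $\mathrm{gcd}(a_0,\ldots,a_n)$ --- absent from the naive count of monomials of weighted degree $m$, which computes the \emph{ordinary} $h^0$ --- is exactly the stacky contribution detected by the orbifold formalism.
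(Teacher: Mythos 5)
Your proposal is correct and follows essentially the same route as the paper's own proof: identify $X$ with the weighted projectivization of $\mathbb C^{(a_0)} \oplus \cdots \oplus \mathbb C^{(a_n)}$ over a point, invoke orbi-ampleness and orbifold Riemann--Roch to reduce to $\int_X c_1\mathcal O_X(1)^n$, and evaluate that integral as $s_0$ via Definition \ref{defisegre} and Proposition \ref{propformulesegre}. The extra details you supply (cancellation of the normalizing factor $1/\mu^n$, the degeneration of each factor $s_\bullet(E_i^{(a_i)})$ to $1$ over a point) are just the bookkeeping the paper leaves implicit.
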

\begin{proof}
It is clear that the weighted direct sum $\mathbb C^{(a_0)} \oplus ... \oplus \mathbb C^{(a_n)} \longrightarrow \mathrm{Spec} \;\mathbb C$ is ample, which means that  $\mathcal O_X(1)$ is orbi-ample. Then, using \cite{rosstho09} and Definition \ref{defisegre},
\begin{align*}
h_{\mathrm{orb}}^0( X, \mathcal O_X(m)) & = \int_{X} c_1 \mathcal O(1)^n \cdot \frac{m^n}{n!} + O(m^{n-1}) \\
	& = s_0( \mathbb C^{(a_0)} \oplus ... \oplus \mathbb C^{(a_n)}) \frac{m^n}{n!} + O(m^{n-1}).
\end{align*}
Besides, because of Proposition \ref{propformulesegre}, we have 
$$
s_0( \mathbb C^{(a_0)} \oplus ... \oplus \mathbb C^{(a_n)}) = \frac{\mathrm{gcd}(a_0, ..., a_n)}{\prod_j a_j},
$$ which gives the result.
\end{proof}

\section{Green-Griffiths jet bundles}

\subsection{Deformation of the jet spaces}

We first remark that for any projective complex manifold, there is a natural deformation of its Green-Griffiths jets spaces to a weighted projectivized bundles, which will permit us to apply the previous discussion to the study of jet differentials.

Let $X$ be a projective complex manifold. For $k \in \mathbb N$, we consider the Green-Griffiths jet differentials algebra $E_{k, \bullet}^{GG} \Omega_X$. Recall (cf. \cite{dem12a}) that $E_{k, \bullet}^{GG} \Omega_X$ is endowed with a natural filtration, which can be described as follows. 

For each $(n_1, ..., n_k) \in \mathbb N^k$, and any coordinate chart $U \subset X$, we define the $(n_1, ..., n_k)$-graded term as the following space of local jet differentials:
\begin{align*}
F^{(l_1, ..., l_k)} E_{k,m}^{GG} & \Omega_X (U) \\
& = \left\{  \left. \sum_{I = (I_1, ..., I_k)} a_I \; (f')^{I_1} ... (f^{(k)})^{I_k} \; 
 \right|  \; (|I_1|, ..., |I_k|) \leq (l_1, ..., l_k) \right\}.
\end{align*}
where for each $I_l = (p_1, ..., p_n)$, we write $(f^{(l)})^{I_l} = (f_1^{(l)})^{r_1} ... (f_n^{(l)})^{r_n}$. In the above formula, the lexicographic order on $\mathbb N^k$ is defined so that $(p_1, ..., p_k) < (n_1, ..., n_k)$ means that either $p_k < n_k$, or $p_k = n_k$ and $(p_1, ..., p_{k-1}) < (n_1, ..., n_{k-1})$ in the lexigraphic order for $\mathbb N^{k-1}$. 

The formula of derivatives of composed maps implies that these local definitions glue together to give a well defined $\mathbb N^k$-filtration $F^\bullet E_{k, m}^{GG} \Omega_X$, compatible with the $\mathcal O_X$-algebra structure on $E_{k, \bullet}^{GG} \Omega_X$, and which is increasing with respect to the lexicographic order. Moreover, the graded terms occur only for $(l_1, ..., l_k)$ such that $l_1 + 2 l_2 + ... + k l_k = m$, and we have, in this case:
$$
\mathrm{Gr}_F^{(l_1, ..., l_k)} \left( E_{k, m}^{GG} \right) = \mathrm{Sym}^{l_1} \Omega_X \otimes ... \otimes \mathrm{Sym}^{l_k} \, \Omega_X.
$$
By Definition \ref{defiprojbundle}, this means exactly that, as an $\mathcal O_X$-algebra,
\begin{equation} \label{eqgrad}
\mathrm{Gr}_F \left( E_{k, \bullet}^{GG} \right) \cong \mathrm{Sym}\, \Omega^{(1)}_X \otimes ... \otimes \mathrm{Sym} \, \Omega^{(k)}_X.
\end{equation}

We can use the Rees deformation construction (see for example \cite{BG96}), to construct a $\mathcal O_{X \times \mathbb C}$-algebra $\mathcal E^{GG}_{k, \bullet}$ on $X \times \mathbb C$, such that  for any $\lambda \in \mathbb C^\ast$, $\left. \mathcal E^{GG}_{k, \bullet} \right|_{X \times \left\{ \lambda \right\}}$ is identified to $E^{GG}_{k,\bullet} \Omega_{X}$, and $\left. \mathcal E^{GG}_{k, \bullet} \right|_{X \times \left\{ 0 \right\} }$ is identified to $\mathrm{Gr}_F \left( E_{k, \bullet}^{GG} \right)$. 
\medskip

Let us give a few details about the construction. We first define a graded $\mathcal O_{X \times \mathbb C^k}$-algebra $\widetilde{\mathcal E}^{GG}_{k, \bullet}$, as follows:
$$
\widetilde{\mathcal E}^{GG}_{k, m} (U \times \mathbb C^k) = \left\{ \left. \sum_{l_1, ..., l_k} u_{l_1, ..., l_k} t_1^{l_1} ... t_k^{l_k} \; \right| \; u_{l_1, ...,l_k} \in F^{(l_1, ..., l_k)} E_{k, m}^{GG} \Omega_X(U) \right\}
$$
where $(t_1, ..., t_k)$ are coordinates on $\mathbb C^k$. It is easy to check that for any $(\lambda_1, ..., \lambda_k) \in \mathbb C^k$ such that each $\lambda_j \neq 0$, we have $\widetilde{\mathcal E}^{GG}_{k, m}|_{X \times \left\{ (\lambda_1, ..., \lambda_k) \right\} } \cong E^{GG}_{k, m} \Omega_X$, and that $\widetilde{\mathcal E}_{k, m}^{GG}|_{X \times \{(0, ..., 0)\}} \cong \mathrm{Gr}_F (E_{k, m}^{GG} \Omega_X)$. Now, define $\mathcal E_{k, \bullet}^{GG}$ to be the pullback of $\widetilde{\mathcal E}^{GG}_{k, \bullet}$ by the embedding $(x, t) \in X \times \mathbb C \mapsto (x, t, ..., t) \in X \times \mathbb C^{k}$.

\begin{rem} While $\widetilde{\mathcal E}_{k, \bullet}^{GG}$ seems to be the natural object arising in the construction above, it is more tractable to work over $X \times \mathbb C$ with the sheaf $\mathcal E_{k, \bullet}^{GG}$. To define the latter, we could have used any embedding $t \in \mathbb C \longmapsto (t \alpha_1, ..., t \alpha_k) \in \mathbb C^k$, with $\alpha_i \neq 0$, so our choice $(\alpha_1, ..., \alpha_k) = (1, ..., 1)$ is rather arbitrary. The same phenomenon occurs in \cite{dem11}, where the metric on $\mathcal O_{X_k^{GG}}(m)$ constructed by Demailly depends on some auxiliary parameters $\epsilon_1, ..., \epsilon_k \in \mathbb R_+^\ast$.
\end{rem}

Applying the $\mathbf{Proj}$ functor, we obtain the following result.

\begin{prop} \label{propdeformation} For any complex projective manifold $X$, and for any $k \in \mathbb N^\ast$, there exists a morphism of varieties $\mathcal X^{GG}_k \longrightarrow X \times \mathbb C$, and an orbifold line bundle $\mathcal O_{\mathcal X^{GG}_k}(1)$ on $\mathcal X^{GG}_k$, such that :
\begin{enumerate}
\item for any $\lambda \in \mathbb C^\ast$, $\left. \mathcal X^{GG}_k \right|_{\lambda}$ is identified to $X^{GG}_k$, and $\left.\mathcal O_{\mathcal X^{GG}_k}(1) \right|_{\lambda}$ identified to $\mathcal O_{X^{GG}_k}(1)$ ;
\item the fibre $\left. \mathcal X^{GG}_k \right|_0$ is identified to the variety $\mathbf{Proj}_X \left( \mathrm{Gr}_F \left(E_{k, \bullet}^{GG} \right) \right) \cong {\rm P}_X \left( T_X^{(1)} \oplus ... \oplus T_X^{(k)} \right)$, and $\left.\mathcal O_{\mathcal X^{GG}_k}(1) \right|_0$ is identified to the tautological line bundle of this weighted projective bundle.
\end{enumerate}
\end{prop}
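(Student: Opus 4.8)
The plan is to obtain $\mathcal{X}^{GG}_k$ and its tautological bundle simply by applying the relative $\mathbf{Proj}$ functor (in the orbifold sense used after Definition \ref{defiprojbundle}) to the graded $\mathcal{O}_{X \times \mathbb{C}}$-algebra $\mathcal{E}^{GG}_{k,\bullet}$ constructed above, and then to read off the two fibres from the already-established identifications of $\mathcal{E}^{GG}_{k,\bullet}$ along $X \times \{\lambda\}$. Concretely, I would set $\mathcal{X}^{GG}_k := \mathbf{Proj}_{X \times \mathbb{C}}(\mathcal{E}^{GG}_{k,\bullet})$, with structure morphism $\mathcal{X}^{GG}_k \to X \times \mathbb{C}$, and let $\mathcal{O}_{\mathcal{X}^{GG}_k}(1)$ be the tautological orbifold line bundle attached to the grading. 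Exactly as in the quotient-stack presentation used after Definition \ref{defiprojbundle}, this relative $\mathbf{Proj}$ carries an orbifold structure, which is what makes $\mathcal{O}(1)$ a well-defined orbifold line bundle even though $\mathcal{E}^{GG}_{k,\bullet}$ is not generated in degree one.

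The key technical input is that relative $\mathbf{Proj}$, together with its tautological line bundle, is compatible with base change: for a graded quasi-coherent algebra $\mathcal{A}$ on a scheme $Y$ and a morphism $g : Y' \to Y$, one has a canonical isomorphism $\mathbf{Proj}_{Y'}(g^* \mathcal{A}) \cong \mathbf{Proj}_Y(\mathcal{A}) \times_Y Y'$ intertwining the two tautological bundles. I would apply this to the closed immersions $X \times \{\lambda\} \hookrightarrow X \times \mathbb{C}$, so that the fibre $\mathcal{X}^{GG}_k|_\lambda$ is identified with $\mathbf{Proj}_X(\mathcal{E}^{GG}_{k,\bullet}|_{X \times \{\lambda\}})$, compatibly with $\mathcal{O}(1)$. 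Since the Rees construction is designed so that $\mathcal{E}^{GG}_{k,\bullet}$ is flat over $\mathbb{C}$, there is no pathology in passing to fibres; one may also pass to a Veronese re-grading by a multiple of $\mathrm{lcm}(1, \dots, k)$ (where $\mathcal{O}(m)$ becomes a standard line bundle, as in Remark \ref{remdivisible}) to reduce to an honestly generated-in-one-degree situation and make the base-change isomorphism entirely standard.

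It then remains to plug in the two fibrewise identifications proved before the statement. For $\lambda \in \mathbb{C}^*$, we have $\mathcal{E}^{GG}_{k,\bullet}|_{X \times \{\lambda\}} \cong E^{GG}_{k,\bullet}\Omega_X$, whose relative $\mathbf{Proj}$ is by definition $X^{GG}_k$ with tautological bundle $\mathcal{O}_{X^{GG}_k}(1)$; this gives point (1). For $\lambda = 0$, we have $\mathcal{E}^{GG}_{k,\bullet}|_{X \times \{0\}} \cong \mathrm{Gr}_F(E^{GG}_{k,\bullet})$, and by \eqref{eqgrad} this graded algebra equals $\mathrm{Sym}\,\Omega_X^{(1)} \otimes \cdots \otimes \mathrm{Sym}\,\Omega_X^{(k)} = \mathrm{Sym}(T_X^{(1)} \oplus \cdots \oplus T_X^{(k)})^*$; by Definition \ref{defiprojbundle} its $\mathbf{Proj}$ is the weighted projective bundle ${\rm P}_X(T_X^{(1)} \oplus \cdots \oplus T_X^{(k)})$ with $\mathcal{O}(1)$ its tautological bundle, giving point (2).

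The step I expect to be delicate is the base-change compatibility at the level of \emph{orbifold} line bundles: since $\mathcal{E}^{GG}_{k,\bullet}$ is weighted, the naive $\mathbf{Proj}$ is singular and $\mathcal{O}(1)$ exists only as a stacky object, so I must check that the canonical isomorphism holds in the orbifold category and respects the orbifold $\mathcal{O}(1)$, not merely on coarse spaces. The clean way around this is the quotient-stack description: presenting $\mathcal{X}^{GG}_k$ fibrewise as a quotient $[\,\cdot\,/\mathbb{C}^*]$ (with the weight action used after Definition \ref{defiprojbundle}), the compatibility with base change is tautological for quotient stacks, and the weight-one character defines $\mathcal{O}(1)$ functorially. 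Everything else is a matter of transporting the already-known algebra-level identifications of $\mathcal{E}^{GG}_{k,\bullet}$ through this formalism.
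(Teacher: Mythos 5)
Your proposal is correct and follows essentially the same route as the paper: both take $\mathcal{X}^{GG}_k = \mathbf{Proj}_{X \times \mathbb{C}}(\mathcal{E}^{GG}_{k,\bullet})$ for the Rees algebra, restrict to the fibres $X \times \{\lambda\}$, and transport the already-established algebra-level identifications (with $E^{GG}_{k,\bullet}\Omega_X$ for $\lambda \neq 0$ and with $\mathrm{Gr}_F$, hence via \eqref{eqgrad} with the weighted symmetric algebra, for $\lambda = 0$) through the $\mathbf{Proj}$ functor, checking compatibility with the grading. Your additional remarks on base change for relative $\mathbf{Proj}$ and on the orbifold/quotient-stack bookkeeping only make explicit points the paper leaves implicit.
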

\begin{rem}
By construction, the identifications mentioned above already occur at the level of sheaves of algebras. To obtain the identifications when taking $\mathbf{Proj}$ functors, we just need to check that the gradings on these sheaves of algebras are compatible under these identifications. 
\end{rem}
\begin{proof}
Let us prove the second point, the first one being similar. By construction, we have a natural identification between 
$$
\mathcal E_{k, \bullet}^{GG}|_{X \times \{ 0 \}} = \bigoplus_{m \geq 0} \mathcal E_{k,m }^{GG}|_{X \times \{ 0 \}}
$$
and 
$$
\mathrm{Gr}_F (E_{k, \bullet}^{GG} \Omega_X) = \bigoplus_{m \geq 0} \mathrm{Gr}_F (E_{k, m}^{GG} \Omega_X)
$$
Moreover, this identification is compatible with the grading in $m$. Besides, by \eqref{eqgrad}, the latter sheaf is identified, as a sheaf of \emph{graded} algebras, with
$$
\mathrm{Sym} \, \Omega_X^{(1)} \otimes ... \otimes \mathrm{Sym} \, \Omega_X^{(k)} = \bigoplus_{m \geq 0} \left( \bigoplus_{l_1 + 2 l_2 + ... + k l_k = m} \mathrm{Sym}^{l_1} \Omega_X \otimes ... \otimes \mathrm{Sym}^{l_k} \Omega_X \right).
$$ 
Now, by Definition \ref{defiprojbundle}, the projectivized bundle associated to the latter sheaf of graded algebras, with respect to the grading in $m$, is ${\rm P}(T_X^{(1)} \oplus ... \oplus T_X^{(k)})$. This implies immediately the identifications of varieties and orbifold line bundles mentioned in the second point. 
\end{proof}

We can now show that some usual positivity properties of the cotangent bundle can be transmitted to the higher order jet differentials. 

\begin{prop} \label{propdiffpositivite} If $\Omega_X$ is ample (resp. nef), then for any $k \in \mathbb N^\ast$, $E^{GG}_{k, \bullet} \Omega_X$ is ample (resp. nef), meaning that $\mathcal O_{X^{GG}_k} (1)$ is ample (resp. nef) as an orbifold line bundle.
\end{prop}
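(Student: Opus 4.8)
The plan is to deduce the positivity of $\mathcal O_{X^{GG}_k}(1)$ from the positivity of the tautological bundle on the \emph{central} fibre of the deformation $\mathcal X^{GG}_k \to X \times \mathbb C$ furnished by Proposition \ref{propdeformation}, transporting positivity from the fibre over $0$ to a generic fibre over $\lambda \in \mathbb C^\ast$. Throughout I fix an integer $m$ divisible by $\mathrm{lcm}(1, \dots, k) = k!$, so that $\mathcal O_{\mathcal X^{GG}_k}(m)$ is a genuine line bundle on the total space (not merely an orbifold one); proving orbi-ampleness (resp. orbi-nefness) of $\mathcal O(1)$ then amounts to proving ampleness (resp. nefness) of $\mathcal O(m)$ on the relevant fibre for every such $m$. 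I will use that the composite $\rho : \mathcal X^{GG}_k \to X \times \mathbb C \to \mathbb C$ is a proper morphism, since $X$ is projective and $\mathcal X^{GG}_k \to X \times \mathbb C$ is the $\mathbf{Proj}$ of a finitely generated graded $\mathcal O_{X\times\mathbb C}$-algebra.

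For the ample case, observe that $\Omega_X$ ample is exactly the hypothesis that $E_i^\ast = \Omega_X$ is ample for the weighted sum $T_X^{(1)} \oplus \dots \oplus T_X^{(k)}$. By Proposition \ref{ampleprop}(3), $\mathcal O(m)$ is ample on the central fibre $(\mathcal X^{GG}_k)_0 \cong {\rm P}(T_X^{(1)} \oplus \dots \oplus T_X^{(k)})$. Since the fibrewise ampleness of a line bundle is an open condition on the base of a proper morphism (standard openness of ampleness, cf. \cite{lazpos2}), the set of $\lambda \in \mathbb C$ for which $\mathcal O(m)|_{(\mathcal X^{GG}_k)_\lambda}$ is ample is open and contains $0$, hence contains some $\lambda \neq 0$. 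Every fibre over $\mathbb C^\ast$ is isomorphic to $X^{GG}_k$ carrying $\mathcal O_{X^{GG}_k}(m)$ by Proposition \ref{propdeformation}(1), so $\mathcal O_{X^{GG}_k}(m)$ is ample. As $m$ ranges over the multiples of $k!$, this shows $\mathcal O_{X^{GG}_k}(1)$ is orbi-ample.

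The nef case is the main obstacle: nefness is a \emph{closed} condition in families, so it cannot be pushed directly from the special fibre to the generic one. The remedy is to reduce nefness to the ampleness of a twist, which is again an open condition. Let $q : \mathcal X^{GG}_k \to X$ denote the projection and fix an ample $\mathbb Q$-class $h$ on $X$. Following the proof of Proposition \ref{propnef}, since $\Omega_X$ is nef the twisted bundles $\Omega_X \langle i h \rangle$ are ample for each $i$, so by Lemma \ref{lemamplepoids} and Definition \ref{defiampletwist} the restriction of $\mathcal O_{\mathcal X^{GG}_k}(m) \otimes q^\ast \mathcal O_X(mh)$ to the central fibre, namely $\mathcal O(m) \otimes \pi^\ast \mathcal O_X(mh)$, is ample. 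By openness of ampleness (applied to $\mathbb Q$-line bundles after clearing denominators), this twist stays ample on a generic fibre $\lambda \neq 0$, yielding that $\mathcal O_{X^{GG}_k}(m) \otimes \pi_k^\ast \mathcal O_X(mh)$ is ample. Taking $h = (t/m)H$ for a fixed ample $H$ and arbitrary rational $t > 0$ shows $\mathcal O_{X^{GG}_k}(m) \otimes \pi_k^\ast \mathcal O_X(tH)$ is ample for all $t > 0$, whence $\mathcal O_{X^{GG}_k}(m)$ is nef. Letting $m$ run over the multiples of $k!$ gives that $\mathcal O_{X^{GG}_k}(1)$ is nef in the orbifold sense.

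In summary, both statements follow from the same mechanism — positivity on the central fibre $\mathbf{via}$ Propositions \ref{ampleprop} and \ref{propnef}, then transport by openness of ampleness along the proper family $\rho$ — the only genuine subtlety being the nef case, where the semicontinuity runs the wrong way and must be circumvented by the twist-by-ample argument. A secondary technical point handled uniformly above is the orbifold nature of the fibres, dealt with by working throughout with the honest line bundles $\mathcal O(m)$ for $k! \mid m$.
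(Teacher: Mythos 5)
Your proof is correct, and the ample case is exactly the paper's argument: orbi-ampleness on the central fibre via Proposition \ref{ampleprop}, openness of ampleness along the proper family $\mathcal X^{GG}_k \to \mathbb C$, and the identification of every fibre over $\mathbb C^\ast$ with $X^{GG}_k$. In the nef case you take a genuinely different route. The paper first applies Proposition \ref{propnef} to get nefness of $\mathcal O(m)$ on the central fibre, and then invokes the fact (cited from \cite{lazpos1}) that a line bundle nef on one fibre of a proper family is nef on the \emph{very general} fibre; one such $\lambda\neq 0$ then suffices. You instead keep the ample twist alive on the total space: Lemma \ref{lemamplepoids} gives ampleness of $\mathcal O(m)\otimes\pi^\ast\mathcal O_X(mh)$ on the central fibre, you transport \emph{ampleness} (an open condition) to a fibre over $\lambda\neq 0$, and only then let the twist tend to zero to conclude nefness of $\mathcal O_{X^{GG}_k}(m)$. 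Both routes are valid; yours swaps the order of ``remove the twist'' and ``move in the family,'' trading the citation of the very-general-fibre nefness statement for a second use of openness of ampleness, which makes it slightly more self-contained since it reuses only machinery the paper already develops for Proposition \ref{propnef}. Two small inaccuracies in your write-up, neither of which damages the argument: your motivating claim that nefness ``cannot be pushed directly from the special fibre to the generic one'' is not right --- that push is precisely the content of the result the paper cites (it is the reverse direction, from very general fibres to a special one, that fails), so your detour is legitimate but not forced; and $\mathrm{lcm}(1,\dots,k)\neq k!$ for $k\geq 4$, which is harmless since you only need $m$ sufficiently divisible.
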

\begin{proof}
Let $\mathcal X^{GG}_k \longrightarrow X \times \mathbb C$ be the variety given by Proposition \ref{propdeformation}, endowed with its orbifold line bundle $\mathcal O_{\mathcal X^{GG}_k} (1)$.

Assume first that $\Omega_X$ is ample. Then, because of Proposition \ref{ampleprop}, a suitable power of the tautological  line bundle $\mathcal O(m)$ is ample on ${\rm P} (T_X^{(1)} \oplus ... \oplus T_X^{(k)} )$ if $m$ is sufficiently divisible. Because of Proposition \ref{propdeformation}, it means that $\mathcal O_{\mathcal X^{GG}_k}(1) |_0$ is ample. By semi-continuity of the ampleness property, for any $\lambda \in \mathbb C^\ast$ in a Zariski neighborhood of $0$, the orbifold line bundle $\mathcal O_{\mathcal X^{GG}_k}(1) |_{\lambda}$ is ample. Again because of Proposition \ref{propdeformation}, this means exactly that $E^{GG}_{k, \bullet} \Omega_X$ is ample.

The case where $\Omega_X$ is nef is dealt with in the same manner, using Proposition \ref{propnef}, and the fact that if $\mathcal O_{X^{GG}_k}(1) |_{0}$ is nef, then $\mathcal O_{X^{GG}_k} (1) |_{\lambda}$ is nef for any very general $\lambda \in \mathbb C$ (see \cite{lazpos1}).
\end{proof}

The previous discussion extends naturally to the case of logarithmic jet differentials. We then have the following proposition.

\begin{prop} \label{deformlog} Let $(X, D)$ be a smooth log-pair. For any $k \in \mathbb N^\ast$, there exists a morphism $\mathcal X^{GG, \log}_k \longrightarrow X \times \mathbb C$ and an orbifold line bundle $\mathcal O_{\mathcal X^{GG, \log}_k} (1)$ on $\mathcal X^{GG, \log}_k$ such that
\begin{enumerate}
\item for any $\lambda \in \mathbb C^\ast$, $\left. \mathcal X^{GG, \log}_k \right|_{\lambda}$ is identified to $X^{GG, \log}_k$, and $\left. \mathcal O_{\mathcal X^{GG, \log}_k} (1) \right|_{\lambda}$ identified to $\mathcal O_{X^{GG}_k}(1)$ ;
\item the fibre $\left. \mathcal X^{GG, \log}_k \right|_0$ is identified to 
$$
\mathbf{Proj}_X \mathrm{Gr}_F \left(E_{k, \bullet}^{GG} \right) \cong \mathbb{P}_X \left( T_X(-\log D)^{(1)} \oplus ... \oplus T_X(-\log D)^{(k)} \right),
$$
and $\left. \mathcal O_{\mathcal X^{GG, \log}_k} (1) \right|_0$ is identified to the tautological orbifold line bundle of this weighted projectivized bundle.
\end{enumerate}
\end{prop}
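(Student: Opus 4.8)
The plan is to reduce Proposition \ref{deformlog} to the non-logarithmic case already established in Proposition \ref{propdeformation}, by mimicking the whole construction of the Rees deformation with the logarithmic jet differential algebra $E_{k,\bullet}^{GG}\Omega_X(\log D)$ in place of $E_{k,\bullet}^{GG}\Omega_X$. The key observation I would rely on is that the entire construction leading to Proposition \ref{propdeformation} is formal: it uses only that $E_{k,\bullet}^{GG}\Omega_X$ carries a natural $\mathbb N^k$-filtration $F^\bullet$, compatible with the $\mathcal O_X$-algebra structure and increasing for the lexicographic order, whose associated graded is given by \eqref{eqgrad}. So the first thing I would check is that the logarithmic jet algebra admits the exact analogue of this filtration.

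\medskip

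\noindent\emph{First} I would recall that in local coordinates adapted to $D$, the logarithmic jet differentials are expressed in terms of the logarithmic derivatives (i.e.\ jets of $\log f_j$ along the components $f_j = 0$ of $D$, and ordinary jets transversally). The filtration $F^{(l_1,\ldots,l_k)}$ is defined exactly as in the non-logarithmic case, by bounding the multidegrees $(|I_1|,\ldots,|I_k|)$ of the monomials in the first through $k$-th (logarithmic) derivatives. The crucial point is that the chain rule for composed maps — which is what guarantees that the local definitions glue in the standard case — has its logarithmic counterpart: the transformation rule for logarithmic jets under change of adapted coordinate is again triangular with respect to the lexicographic order, so the local filtrations glue to a well-defined global $\mathbb N^k$-filtration on $E_{k,\bullet}^{GG}\Omega_X(\log D)$, compatible with the algebra structure. \emph{Second} I would identify the associated graded: by the same local computation as in \eqref{eqgrad}, the graded pieces are
$$
\mathrm{Gr}_F^{(l_1,\ldots,l_k)}\left(E_{k,m}^{GG}\Omega_X(\log D)\right) = \mathrm{Sym}^{l_1}\Omega_X(\log D)\otimes\cdots\otimes\mathrm{Sym}^{l_k}\Omega_X(\log D),
$$
for $l_1 + 2l_2 + \cdots + kl_k = m$, so that as an $\mathcal O_X$-algebra
$$
\mathrm{Gr}_F\left(E_{k,\bullet}^{GG}\Omega_X(\log D)\right) \cong \mathrm{Sym}\,\Omega_X(\log D)^{(1)}\otimes\cdots\otimes\mathrm{Sym}\,\Omega_X(\log D)^{(k)}.
$$

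\medskip

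\noindent\emph{Third}, with this filtration in hand, I would run the Rees deformation verbatim: form the graded $\mathcal O_{X\times\mathbb C^k}$-algebra $\widetilde{\mathcal E}^{GG,\log}_{k,\bullet}$ using the filtration weights $t_1^{l_1}\cdots t_k^{l_k}$, pull back along the diagonal embedding $t\mapsto(t,\ldots,t)$ to obtain $\mathcal E^{GG,\log}_{k,\bullet}$ on $X\times\mathbb C$, and apply the $\mathbf{Proj}$ functor. Exactly as in the proof of Proposition \ref{propdeformation}, the fibre over $\lambda\in\mathbb C^\ast$ recovers $E_{k,\bullet}^{GG}\Omega_X(\log D)$ (hence $X_k^{GG,\log}$ after $\mathbf{Proj}$), while the fibre over $0$ recovers $\mathrm{Gr}_F$. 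By Definition \ref{defiprojbundle}, the $\mathbf{Proj}$ of this graded algebra, with respect to the grading in $m$, is precisely the weighted projectivized bundle $\mathbb P_X\bigl(T_X(-\log D)^{(1)}\oplus\cdots\oplus T_X(-\log D)^{(k)}\bigr)$, since $\mathrm{Sym}\,\Omega_X(\log D) = \mathrm{Sym}\,(T_X(-\log D))^\ast$. The orbifold line bundle $\mathcal O_{\mathcal X^{GG,\log}_k}(1)$ is the tautological bundle of this $\mathbf{Proj}$, and its restrictions to the two types of fibres are identified as claimed, the compatibility of gradings being checked as in the remark preceding the proof of Proposition \ref{propdeformation}.

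\medskip

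\noindent\textbf{Main obstacle.} I expect the only genuine work to be verifying that the local filtrations glue globally in the logarithmic setting, i.e.\ that the logarithmic chain rule is triangular for the lexicographic order — this is the step where the specific structure of $\Omega_X(\log D)$ enters and where one must be careful that logarithmic derivatives transform among themselves without introducing lower-order poles. Everything after that is a formal transcription of the non-logarithmic argument. In the write-up I would therefore emphasize that, since the construction of Proposition \ref{propdeformation} depends only on the existence of a lexicographically increasing $\mathbb N^k$-filtration with the graded given by \eqref{eqgrad}, and since this structure persists for the logarithmic jet algebra, the proof of Proposition \ref{propdeformation} applies \emph{mutatis mutandis}.
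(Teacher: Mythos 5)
Your proposal is correct and follows exactly the route the paper takes: the paper's own proof is a one-line reduction stating that it suffices to use the existence of a filtration on $E^{GG}_{k,m}\Omega_X(\log D)$ with graded algebra $\mathrm{Sym}\,\Omega_X(\log D)^{(1)}\otimes\cdots\otimes\mathrm{Sym}\,\Omega_X(\log D)^{(k)}$ and then repeat the Rees construction of Proposition \ref{propdeformation}. You have merely spelled out the details (gluing of the local filtrations, identification of the graded pieces, transcription of the deformation) that the paper leaves implicit.
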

\begin{proof} As before, it suffices to use the fact that $E^{GG}_{k,m} \Omega_X(\log D)$ admits a filtration whose graded algebra is $\mathrm{Sym}\, \Omega_X(\log D)^{(1)} \otimes ... \otimes \mathrm{Sym} \, \Omega_X(\log D)^{(k)}$.
\end{proof}

In this setting, Proposition \ref{propdiffpositivite} extends naturally:
\begin{prop} \label{proplogpositicite} Let $(X, D)$ be a smooth log-pair. If $\Omega_X (\log D)$ is nef (resp. ample), then for any $k \in \mathbb N^\ast$, $E^{GG}_{k, \bullet} \Omega_X (\log D)$ is nef (resp. ample), meaning that $\mathcal O_{X^{GG, \log}_k} (1)$ is nef (resp. ample) as orbifold line bundle.
\end{prop}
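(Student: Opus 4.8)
The plan is to mimic essentially verbatim the proof of Proposition \ref{propdiffpositivite}, replacing the absolute cotangent bundle by its logarithmic counterpart and the deformation of Proposition \ref{propdeformation} by the logarithmic deformation of Proposition \ref{deformlog}. The key observation making this transfer painless is that the dual of $T_X(-\log D)$ is precisely $\Omega_X(\log D)$; hence the hypothesis on $\Omega_X(\log D)$ is exactly the hypothesis ``$E_i^\ast$ ample (resp.\ nef)'' appearing in Propositions \ref{ampleprop} and \ref{propnef}, applied with $E_i = T_X(-\log D)$ for every $i$.

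First I would invoke Proposition \ref{deformlog} to obtain the family $\mathcal X^{GG, \log}_k \longrightarrow X \times \mathbb C$ together with its orbifold line bundle $\mathcal O_{\mathcal X^{GG, \log}_k}(1)$, whose central fibre is the weighted projectivized bundle ${\rm P}_X\left(T_X(-\log D)^{(1)} \oplus ... \oplus T_X(-\log D)^{(k)}\right)$ carrying the tautological bundle. Assuming $\Omega_X(\log D)$ ample, I would apply Proposition \ref{ampleprop}, point 3, with each $E_i = T_X(-\log D)$, to conclude that $\mathcal O(m)$ is ample on this central fibre for $m$ sufficiently divisible, i.e.\ that $\mathcal O_{\mathcal X^{GG, \log}_k}(1)|_0$ is orbi-ample. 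Ampleness being an open condition in families, it would then persist on the fibres over every $\lambda$ in a Zariski neighbourhood of $0$; since each such fibre over $\lambda \in \mathbb C^\ast$ is identified with $X^{GG, \log}_k$ and the restriction of $\mathcal O_{\mathcal X^{GG, \log}_k}(1)$ with $\mathcal O_{X^{GG, \log}_k}(1)$, this yields the ampleness of $E^{GG}_{k, \bullet} \Omega_X(\log D)$. For the nef case I would proceed identically, using Proposition \ref{propnef} to get nefness of $\mathcal O(m)$ on the central fibre, and then the fact that nefness of a line bundle in a family is detected on a very general fibre (cf.\ \cite{lazpos1}) to transport the property to the generic fibre $X^{GG, \log}_k$.

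The main point requiring care — rather than a genuine obstacle — is to ensure that these semicontinuity arguments for ampleness and nefness are legitimate in the orbifold/stack setting. This is handled by passing to a sufficiently divisible $m$, for which $\mathcal O(m)$ is an honest line bundle (by the last statement of the proposition following Definition \ref{defiprojbundle}), so that one reduces to the standard schematic semicontinuity statements applied to the family $\mathcal X^{GG, \log}_k \longrightarrow \mathbb C$ and the genuine line bundle $\mathcal O_{\mathcal X^{GG, \log}_k}(m)$. One must also check that the definition of orbi-ampleness (resp.\ orbi-nefness) via sufficiently divisible powers of $\mathcal O(1)$ is compatible with the fibrewise identifications supplied by Proposition \ref{deformlog}, which is immediate from the construction, the gradings being preserved under these identifications.
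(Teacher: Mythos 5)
Your proof is correct and follows exactly the route the paper intends: the paper gives no separate argument for Proposition \ref{proplogpositicite}, stating only that Proposition \ref{propdiffpositivite} ``extends naturally,'' and your write-up is precisely that extension, substituting the logarithmic deformation of Proposition \ref{deformlog} and applying Propositions \ref{ampleprop} and \ref{propnef} to $E_i = T_X(-\log D)$ before spreading positivity from the central fibre by semicontinuity (resp.\ the very-general-fibre argument for nefness). Your added remarks on passing to a sufficiently divisible $m$ to reduce to honest line bundles mirror the care already taken in the proof of Proposition \ref{propdiffpositivite}.
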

\begin{rem}
Proposition \ref{proplogpositicite} is actually only relevant for the nef property. Indeed, except when $X$ is a curve, the bundle $\Omega_X (\log D)$ cannot be ample in general: if $D$ is smooth and $\dim X \geq 2$, we have the residue map
$$
\Omega_X (\log D) \longrightarrow \mathcal O_D \longrightarrow 0,
$$
and the restriction of this map to $D$ shows that $\Omega_X (\log D)|_{D}$ has a trivial quotient. This implies that $\Omega_X (\log D)$ is not ample.
\end{rem}

The following result, combining two theorems of Campana and P\u{a}un \cite{campau15}, and Demailly \cite{dem11}, shows that the bigness of the canonical orbifold line bundle $\mathcal O(1)$ on ${\rm P}(T_X^{(1)} \oplus ... \oplus T_X^{(k)})$ for $k$ large enough suffices to characterize the manifolds of general type.

\begin{prop} Let $X$ be a projective smooth manifold. The following assertions are equivalent.
\begin{enumerate}[(i)]
\item $X$ is of general type;
\item for large $k$, $E^{GG}_{k, \bullet} X$ is big, meaning that the usual line bundle  $\mathcal O(m) \longrightarrow X^{GG}_k$ is big for $m$ sufficiently divisible ;
\item for large $k$, the orbifold line bundle $\mathcal O(1) \longrightarrow {\rm P}_X (T_X^{(1)} \otimes ... \otimes T_X^{(k)})$ is big, i.e. the line bundle $\mathcal O(m)$ is big for $m$ sufficiently divisible.
\end{enumerate}
\end{prop}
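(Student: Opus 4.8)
The plan is to establish the cycle of implications $(i) \Rightarrow (ii) \Rightarrow (iii) \Rightarrow (i)$. The implication $(i) \Rightarrow (ii)$ is exactly the content of Demailly's theorem in \cite{dem11}: if $X$ is of general type, then for $k$ large enough the Green-Griffiths jet differentials have maximal growth, i.e.\ $\mathcal O_{X^{GG}_k}(1)$ is big. The implication $(ii) \Rightarrow (iii)$ is where the deformation of Proposition \ref{propdeformation} enters, and the implication $(iii) \Rightarrow (i)$ is a consequence of the generic semipositivity theorem of Campana and P\u{a}un \cite{campau15}. I would carry out the two nontrivial implications as follows.

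For $(ii) \Rightarrow (iii)$, I would compare the global sections of $\mathcal O(m)$ on the general fibre $X^{GG}_k$ and on the central fibre ${\rm P}_X(T_X^{(1)} \oplus ... \oplus T_X^{(k)})$ of the family $\mathcal X^{GG}_k \to X \times \mathbb C$ of Proposition \ref{propdeformation}. For $m$ sufficiently divisible these are, respectively, $H^0(X, E^{GG}_{k,m}\Omega_X)$ and $H^0(X, \mathrm{Gr}_F(E^{GG}_{k,m}))$. Since $E^{GG}_{k,m}\Omega_X$ carries the finite filtration $F^\bullet$ whose associated graded is $\mathrm{Gr}_F(E^{GG}_{k,m})$, the long exact sequences attached to the successive quotients give $h^0(X, E^{GG}_{k,m}\Omega_X) \leq h^0(X, \mathrm{Gr}_F(E^{GG}_{k,m}))$, hence $\mathrm{vol}(\mathcal O_{X^{GG}_k}(1)) \leq \mathrm{vol}(\mathcal O(1))$ on the central fibre. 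Thus bigness of $\mathcal O_{X^{GG}_k}(1)$ forces bigness of $\mathcal O(1)$ on the weighted projectivized bundle. Note that this is the only direction of the comparison we can extract this way; the reverse inequality of volumes is not available from semicontinuity alone, which is precisely why I prove a closed cycle rather than two-sided equivalences directly.

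For $(iii) \Rightarrow (i)$, I would extract a big subsheaf of a tensor power of $\Omega_X$. Fix an ample line bundle $A$ on $X$ and an integer $c$ so that $\mathcal O(c) \otimes \pi^\ast A$ is ample on $P = {\rm P}_X(T_X^{(1)} \oplus ... \oplus T_X^{(k)})$ (possible since $\mathcal O(1)$ is relatively ample). Bigness of $\mathcal O(1)$ then yields, for $m$ large and divisible, a nonzero section of $\mathcal O(m-c) \otimes \pi^\ast A^{-1}$, i.e.\ a nonzero element of
\[
H^0\Big(X, \bigoplus_{l_1 + 2l_2 + ... + k l_k = m - c} \mathrm{Sym}^{l_1}\Omega_X \otimes ... \otimes \mathrm{Sym}^{l_k}\Omega_X \otimes A^{-1}\Big).
\]
Hence some $\mathrm{Sym}^{l_1}\Omega_X \otimes ... \otimes \mathrm{Sym}^{l_k}\Omega_X \otimes A^{-1}$ has a nonzero section; as this sheaf is a direct summand of $\Omega_X^{\otimes N} \otimes A^{-1}$ (with $N = l_1 + ... + l_k$, using that we work in characteristic zero), we obtain an injection $A \hookrightarrow \Omega_X^{\otimes N}$, that is, a rank-one subsheaf of $\Omega_X^{\otimes N}$ with big determinant. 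By the theorem of Campana and P\u{a}un \cite{campau15}, the existence of such a subsheaf implies that $K_X$ is big, i.e.\ $X$ is of general type.

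The main obstacle is the implication $(iii) \Rightarrow (i)$: one must be careful that the positivity extracted from bigness of $\mathcal O(1)$ is precisely of the form covered by the generic semipositivity theorem, and in particular that the rank-one subsheaf produced has big (rather than merely pseudoeffective) determinant. The remaining steps are bookkeeping: the identifications of the relevant section spaces with graded jet differentials come from the filtration $F^\bullet$ and Proposition \ref{propdeformation}, and the passage from symmetric to tensor powers is the standard splitting valid in characteristic zero.
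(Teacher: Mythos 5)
Your proposal is correct, and it follows the same cycle $(i)\Rightarrow(ii)\Rightarrow(iii)\Rightarrow(i)$ as the paper, with the same two citations doing the heavy lifting at the ends. The one place where your route genuinely differs is $(ii)\Rightarrow(iii)$: the paper works on the total space of the Rees family $\mathcal X^{GG}_k \to X\times\mathbb C$, notes that $\mathcal O_{\mathcal X^{GG}_k}(m)$ is flat over the base, and invokes upper semicontinuity of $h^0$ to transfer the lower bound $h^0 \geq C m^{n+nk-1}$ from the fibres over $\lambda\neq 0$ to the central fibre; you instead bypass the family entirely and use the elementary inequality $h^0(X, E^{GG}_{k,m}\Omega_X) \leq h^0(X, \mathrm{Gr}_F(E^{GG}_{k,m}))$ coming from left-exactness of $H^0$ applied to the finite filtration $F^\bullet$. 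These are two faces of the same phenomenon (semicontinuity for the Rees deformation \emph{is} the filtration inequality), but your version is more elementary and self-contained, at the cost of not illustrating the deformation technique that the paper is advertising; the paper's version has the advantage of being the template for the analogous transfer of nefness and ampleness used elsewhere in the text. For $(iii)\Rightarrow(i)$ the paper simply cites Campana--P\u{a}un, whereas you spell out the standard deduction (Kodaira's lemma on ${\rm P}_X(T_X^{(1)}\oplus\cdots\oplus T_X^{(k)})$, pushforward to a section of some $S^{l_1}\Omega_X\otimes\cdots\otimes S^{l_k}\Omega_X\otimes A^{-1}$, hence an ample line subsheaf of $\Omega_X^{\otimes N}$); this is a correct and welcome elaboration, and your attention to the subsheaf having big rather than merely pseudoeffective determinant is exactly the right point to watch.
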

\begin{proof}
$(i) \Rightarrow (ii)$. This is the main result of \cite{dem11}. 

$(ii) \Rightarrow (iii)$. Let $k \in \mathbb N^\ast$ large enough, and consider a sufficiently divisible $m \in \mathbb N^\ast$. Let $\mathcal X^{GG}_k \longrightarrow X \times \mathbb C$ be the variety given by Proposition \ref{propdeformation}, endowed with its tautological orbifold line bundle $\mathcal O_{\mathcal X^{GG}_k}(1)$. For any $\lambda \in \mathbb C^\ast$, $\mathcal O_{\mathcal X^{GG}_k} (m) |_{\lambda}$ is identified to $\mathcal O_{X^{GG}_k} (m)$, which is big. Consequently , there exists a constant $C > 0$, such that for any $\lambda \in \mathbb C^{\ast}$,
$$
h^{0} \left(\left. \mathcal X_k^{GG} \right|_{\lambda}, \mathcal O_{\mathcal X^{GG}_k}(m)|_{\lambda}\right) \geq C m^{n + nk -1}.
$$
Since $\mathcal O_{\mathcal X^{GG}_k}(m)$ is flat on the base $\mathbb C$, we deduce by semi-continuity that
$$
h^{0} \left(\left. \mathcal X_k^{GG} \right|_{0}, \mathcal O_{\mathcal X^{GG}_k}(m)|_{0}\right) \geq C m^{n + nk -1}.
$$
Besides, $X^{GG}_k|_{0}$ et $\mathcal O_{\mathcal X^{GG}_k}(1)|_{0}$ are identified with ${\rm P}_X(T_X^{(1)} \oplus ... \oplus T_X^{(k)})$ and to its tautological line bundle, so the previous inequality means exactly that $\mathcal O(1)$ is big on ${\rm P}(T_X^{(1)} \oplus ... \oplus T_X^{(k)})$.

$(iii) \Rightarrow (i)$. This result is proved in \cite{campau15}.
\end{proof}

\section{Application to the toroidal compactifications of ball quotients} \label{applicationcompactifications}

Let $\Gamma \in \mathrm{Aut}(\mathbb B^n)$ be a lattice with only unipotent parabolic isometries. Then, by \cite{AMRT} and \cite{mok12}, we can compactify the quotient $X = \quotient{\mathbb B^n}{\Gamma}$ into a \emph{toroidal compactification} $\overline{X} = X \sqcup D$, where $D$ is a disjoint union of abelian varieties. From now, on, $\overline{X}$ will always denote such a toroidal compactification of a ball quotient.

Let $k \in \mathbb N^\ast$, and let $\mathcal X^{GG, \log}_k \longrightarrow X \times \mathbb C$ be the family given by Proposition \ref{deformlog}. Denote by $P^{GG, \log}_k \subset \mathcal X^{GG, \log}_k$ the fibre above $0 \subset \mathbb C$, which is isomorphic to ${\rm P}_X(T_X(-\log D)^{(1)} \oplus ... \oplus T_X (-\log D)^{(k)})$. Let us also denote by $\mathcal O_P(1)$ the orbifold tautological line bundle on this weighted projective bundle.

\begin{prop} The orbifold line bundle $\mathcal O_{\overline{X}^{GG, \log}_k} (1) \longrightarrow \overline{X}^{GG, \log}_k$ is nef.
\end{prop}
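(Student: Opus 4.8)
The plan is to reduce the statement to the nefness of the logarithmic cotangent bundle $\Omega_{\overline{X}}(\log D)$ and then invoke the machinery already developed. Indeed, Proposition \ref{proplogpositicite} asserts precisely that if $\Omega_{\overline{X}}(\log D)$ is nef, then $\mathcal O_{\overline{X}^{GG,\log}_k}(1)$ is nef as an orbifold line bundle, for every $k$. Recall that this implication runs through the deformation $\mathcal X^{GG,\log}_k \to \overline{X}\times\mathbb C$: nefness of $\Omega_{\overline{X}}(\log D)$ yields, via Proposition \ref{propnef}, nefness of the tautological bundle $\mathcal O_P(1)$ on the central fibre $P^{GG,\log}_k = {\rm P}(T_{\overline{X}}(-\log D)^{(1)}\oplus\dots\oplus T_{\overline{X}}(-\log D)^{(k)})$, and nefness then propagates to the very general fibre $\lambda\neq 0$, each of which is isomorphic to $\overline{X}^{GG,\log}_k$ together with its tautological orbifold line bundle. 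Thus the whole content of the proposition is concentrated in the following geometric input.

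First I would establish that $\Omega_{\overline{X}}(\log D)$ is nef. The source of positivity is the $\Gamma$-invariant Bergman (equivalently, complete K\"ahler--Einstein) metric $\omega$ on $\mathbb B^n$: being $\Gamma$-invariant it descends to $X=\mathbb B^n/\Gamma$, where it is a complete metric of finite volume with negative holomorphic sectional curvature, and it induces a smooth Hermitian metric of Griffiths-positive curvature on $\Omega_X$ over the open part $X\subset\overline{X}$. The whole difficulty is to understand how this metric behaves across the boundary divisor $D$. Here the hypothesis that $\Gamma$ has only unipotent parabolic isometries is essential: by the structure theory of toroidal compactifications (\cite{AMRT}, \cite{mok12}), $D$ is a disjoint union of abelian varieties, and near a boundary point one has explicit ``cusp'' coordinates $(z_1,\dots,z_n)$ with $\{z_1=0\}=D$ in which the asymptotics of $\omega$ are standard. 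In the logarithmic coframe $\tfrac{dz_1}{z_1},dz_2,\dots,dz_n$ of $\Omega_{\overline{X}}(\log D)$, the invariant metric transported from the open part extends to a (possibly singular) Hermitian metric on $\Omega_{\overline{X}}(\log D)$ whose curvature is semi-positive up to error terms carrying the correct sign, or negligible, as one approaches $D$.

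From this I would deduce nefness of $\Omega_{\overline{X}}(\log D)$ as follows: for an auxiliary ample class $A$ and any $\epsilon>0$, the extended metric, corrected by $\epsilon A$, can be regularized into a smooth Hermitian metric of Griffiths-positive curvature on $\Omega_{\overline{X}}(\log D)\otimes\epsilon A$; letting $\epsilon\to 0$ shows that $\mathcal O(1)$ on ${\rm P}(T_{\overline{X}}(-\log D))$ has nonnegative degree against every curve, which is exactly the nefness of $\Omega_{\overline{X}}(\log D)$. Equivalently, one may quote the nefness of the logarithmic cotangent bundle of a toroidal compactification of a ball quotient directly from \cite{mok12} as one of the basic properties of these manifolds.

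The main obstacle is precisely the boundary analysis of the previous paragraph: controlling the degeneration of the invariant metric near the cusps and verifying that, once expressed in the logarithmic frame, its curvature is semi-positive in the sense needed to force nefness. Away from $D$ everything is immediate from the negative curvature of the Bergman metric; the unipotent parabolic assumption is exactly what makes the cusp geometry tractable and guarantees that the logarithmic---rather than merely the ordinary---cotangent bundle inherits the positivity. Once nefness of $\Omega_{\overline{X}}(\log D)$ is in hand, Proposition \ref{proplogpositicite} closes the argument with no further work.
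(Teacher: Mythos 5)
Your argument is essentially the paper's: the proposition is reduced to the nefness of $\Omega_{\overline{X}}(\log D)$, which is then fed into the positivity transfer for logarithmic jet bundles (Proposition \ref{proplogpositicite}, via the deformation to the weighted projective bundle and Proposition \ref{propnef}). The only difference is one of attribution and emphasis: the paper simply quotes the nefness of $\Omega_{\overline{X}}(\log D)$ from \cite{cad16} rather than from \cite{mok12}, and does not attempt the boundary curvature analysis you sketch --- which you correctly identify as the real content of that input, but which is not needed here since the result is taken from the literature.
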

\begin{proof}
The vector bundle $\Omega_{\overline{X}} (\log D)$ is nef because of \cite{cad16}. Thus, the result comes from Proposition \ref{propdiffpositivite}.
\end{proof}

If $m_0 = \lcm(1, ..., k)$, the standard line bundle $\mathcal O_{\overline{X}^{GG, \log}_k} (m_0)$ is nef. This gives the following asymptotic expansion:
\begin{align} \nonumber
h^0(\overline{X}, E^{GG}_{k, l m_0} \Omega_{\overline{X}} (\log D)) & = h^0(\overline{X}^{GG}_k, \mathcal O_{\overline{X}^{GG, \log}_k} (l m_0)) \\ \nonumber
& = \chi(\overline{X}^{GG}_k, \mathcal O_{\overline{X}^{GG, \log}_k} (l m_0)) + O(l^{n + nk - 2}) \\
		      & = \left( \int_{\overline{X}^{GG, \log}_k} c_1 \mathcal O(m_0)^{m + nk - 1} \right) l^{n + nk - 1} + O( l^{n + nk - 2} ). \label{dvtasymptotique}
\end{align}

By Proposition \ref{deformlog}, $\overline{X}_k^{GG, \log}$ and $P_k^{GG, \log}$ are members of the same flat family $\mathcal X^{GG, \log}_k \longrightarrow \mathbb C$. Thus, since the first Chern class is a topological invariant, we can compute the leading coefficient of this last expansion, as follows:
$$
\int_{\overline{X}^{GG, \log}_k} c_1 \mathcal O(m_0)^{m + nk - 1} = \int_{P_k^{GG, \log}} c_1 \mathcal O_{P}(m_0)^{m + nk - 1}.
$$
Then, using Definition \ref{defisegre}, we find
$$
\int_{\overline{X}^{GG, \log}_k} c_1 \mathcal O(m_0)^{m + nk - 1} = m_0^{n + nk -1} \int_{\overline{X}} s_{n}(T_{\overline{X}} (-\log D)^{(1)} \oplus ... \oplus T_{\overline{X}} (- \log D)^{(k)}).
$$

If we insert this equation in \eqref{dvtasymptotique}, we see that if $m \in \mathbb N$ is divisible by $m_0$, we have
\begin{align*}
h^0(\overline{X}, E^{GG}_{k, m} \Omega_{\overline{X}} & (\log D)) =  \\
& \int_{\overline{X}} s_{n}(T_{\overline{X}} (-\log D)^{(1)} \oplus ... \oplus T_{\overline{X}}  (- \log D)^{(k)}) \; \frac{m^{n +nk -1}}{(n + nk -1)!} \\
&  + O(m^{n+nk -2}).
\end{align*}

This gives the following value for the volume of $E^{GG}_{k, \bullet} \Omega_{\overline{X}} (\log D)$:
\begin{equation} \label{volumeexpr}
\mathrm{vol} \left(E^{GG}_{k, \bullet} \Omega_{\overline{X}} (\log D) \right) = \int_{\overline{X}} s_{n} (T_{\overline{X}} (- \log D)^{(1)} \oplus ... \oplus T_{\overline{X}} (- \log D)^{(k)} ).
\end{equation}

\subsection{Combinatorial expression of the volume. Uniform lower bound in $k$}

The volume \eqref{volumeexpr} can be expressed as a certain universal polynomial with rational coefficients in the Chern classes of $T_{\overline{X}}(-\log D)$. The same polynomial, applied to the Chern  classes of $T_{\mathbb P^n}$ over $\mathbb P^n$, permits to compute $\int_{\mathbb P^n} s_{n} (T_{\mathbb P^n}^{(1)} \oplus ... \oplus T_{\mathbb P^n}^{(k)})$, and Hirzebruch proportionality principle in the non-compact case (see \cite{mum77}) implies
\begin{align*}
\int_{\overline{X}} s_{n} (T_{\overline{X}} (- \log D)^{(1)} \oplus ...  \oplus T_{\overline{X}} &  (- \log D)^{(k)} ) = \\
& (-1)^n \frac{(K_{\overline{X}} + D)^n}{(n+1)^n} \int_{\mathbb P^n} s_{n} (T_{\mathbb P^n}^{(1)} \oplus ... \oplus T_{\mathbb P^n}^{(k)} )
\end{align*}

Using Proposition \ref{propformulesegre}, we can give an explicit combinatorial expression of this last quantity. Indeed, if we let $H = c_1 \mathcal O_{\mathbb P^n} (1)$, since $s_{\bullet} (T_{\mathbb P^n}) =  \left( \sum_{i= 1}^n (-1)^i H^i \right)^{n+1}$, we find
\begin{align*}
(-1)^n & \int_{\mathbb P^n}  s_{n} (T_{\mathbb P^n}^{(1)} \oplus ... \oplus T_{\mathbb P^n}^{(k)} )   \\
	& = \frac{(-1)^n}{(k!)^n} \left\{ \left( \sum_{i=1}^n (-1)^i H^i \right)^{n+1} \left( \sum_{i=1}^n (-1)^i \frac{H^i}{2^i} \right)^{n+1} \cdot .... \cdot \left( \sum_{i=1}^n (-1)^i \frac{H^i}{k^i} \right)^{n+1}  \cdot \left[ \mathbb P^n \right]\right\}_0 \\
& = \frac{1}{(k!)^n} \sum_{l_{1,1} + l_{1,2} + ... + l_{n+1, k} = n} \frac{1}{1^{l_{1,1} + l_{2, 1} + ... + l_{n+1, 1}} \cdot ... \cdot k^{ l_{1, k} + ... + l_{n+1, k}}} (H^n \cdot \left[ \mathbb P^n \right]).
\end{align*}
where each index $l_{i,j}$ ($i \in \left[| 1, n+1 \right|]$, $j \in \left[| 1, k \right|]$) represents a possible choice of power for $H$ in the $i$-th factor of the product $\left( \sum_{l = 1}^n (-1)^l \frac{H^l}{j^l} \right)^{n+1}$.
Thus, we see that choosing exponents $(l_{i,j})_{1 \leq i \leq n+1, 1 \leq j \leq k}$ such that $\sum_{i,j} l_{i,j} = n$ amounts to choosing a non-decreasing sequence $u_1 \leq ... \leq u_n$ of elements of the ordered set 
$$
S_{k, n} = \left\{1_1 < ...< 1_{n+1} < 2_1 < ...< 2_{n+1} < ... < k_{1} < ... < k_{n+1} \right\},
$$
where each integer between $1$ and $k$ is repeated $n+1$ times. 
The bijection between the set of choices of $(l_{i,j})$ and the set of sequences $u_1 \leq ... \leq u_n$ can easily be made explicit : to $(l_{i,j})$, we associate the sequence $(u_i)$, where the element $j_m$ is repeated $l_{m, j}$ times. Thus, we find
$$
(-1)^n \int_{\mathbb P^n} s_{n} (T_{\overline{X}}^{(1)} \oplus ... \oplus T_{\overline{X}}^{(k)} ) = \frac{1}{(k!)^n} \sum_{\left\{u_1 \leq ... \leq u_n \right\} \subset S_{k, n}} \frac{1}{u_1 ... u_n},
$$
where, in the quotient appearing on the right hand side, we compute the product by treating the elements of $S$ as ordinary integers (we forget their indexes).

We then find an explicit combinatorial formula for the volume of logarithmic jet differentials of order $k$ :
\begin{equation} \label{estimeevolcomb}
\mathrm{vol} \left( E_{k, \bullet}^{GG} \Omega_{\overline{X}} \left( \log D \right) \right) = \frac{\left(K_{\overline{X}} + D \right)^n}{(n+1)^n (k!)^n} \sum_{\left\{ u_1 \leq ... \leq u_n \right\} \subset S_{k, n} } \frac{1}{u_1 ... u_n}. 
\end{equation}

It is not hard to use this formula to obtain a more tractable lower bound on the volume. Indeed, we have:
\begin{align} \nonumber
n! \sum_{ \left\{u_1 \leq ... \leq u_n \right\} \subset S_{k,n} } \frac{1}{u_1 ... u_n} & \geq \sum_{ (u_1, ..., u_n) \in S_{k, n}^n } \frac{1}{u_1 ... u_n}  \\ \nonumber
	& = \left( \sum_{u \in S_{k, n}} \frac{1}{u} \right)^n \\
	& = (n+1)^n \left( 1 + 1/2 + ... + 1/k\right)^n \geq (n+1)^n (\log k + \gamma)^n \label{minorationestimeeouvert}
\end{align}
where, in the first inequality, we use the fact that for any ordered set $\left\{ u_1 \leq ... \leq u_n \right\}$, the number of distinct $n$-uples $(v_1, ..., v_n)$ having the same elements is at least $n!$. The letter $\gamma$ represents the Euler-Mascheroni constant.

We obtain the following lower bound, \emph{valid for any $k \geq 1$}: 
$$
\mathrm{vol} \left( E^{GG}_{k, \bullet} \Omega_{\overline{X}} (\log D) \right) \geq \left( K_{\overline{X}} + D \right)^n \frac{( \log k + \gamma)^n}{(k!)^n \, n!}.
$$
This formula can be seen as an effective version of the asymptotic estimates of \cite{dem11}, in the case of logarithmic jet differentials on a toroidal compactification of a ball quotient.

\section{Upper bound on the vanishing conditions on the boundary}

We will now study the number of vanishing conditions on the boundary that a logarithmic jet differential must satisfy to be a standard one.

For any $k \in \mathbb N^\ast$, and any $m \in \mathbb N$, we define a sheaf $\mathcal Q_{k, m}$, supported on $D$, in the following manner:
\begin{equation} \label{suiteexacte}
0 \longrightarrow E_{k,m}^{GG} \Omega_{\overline{X}} \longrightarrow E_{k,m}^{GG} \Omega_{\overline{X}} (\log D) \longrightarrow \mathcal Q_{k,m} \longrightarrow 0.
\end{equation}
Then, we have:
\begin{equation} \label{minorationstdlog}
h^0(\overline{X}, E_{k,m}^{GG} \Omega_{\overline{X}} ) \; \geq \; h^0(\overline{X}, E_{k,m}^{GG} \Omega_{\overline{X}} (\log D) ) - h^0(\overline{X}, \mathcal Q_{k,m}).
\end{equation} 

\subsection{Filtration on the quotient $\mathcal Q_{k,m}$}

Our goal is to obtain an upper bound on $h^0(\mathcal Q_{k,m})$, as  $m \longrightarrow +\infty$, with fixed $k \in \mathbb N$. To do this, we will produce a sufficiently sharp filtration on the sheaf $\mathcal Q_{k,m}$, so that the graded terms are locally free $\mathcal O_D$-modules. We will then the bound from above the number of global sections of these graded terms.

\begin{prop} The inclusion of \eqref{suiteexacte} preserves the natural filtrations on $E_{k,m}^{GG} \Omega_{\overline{X}}$ and $E_{k,m}^{GG} \Omega_{\overline{X}} (\log D)$.
\end{prop}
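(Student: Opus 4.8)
The plan is to reduce the statement to a local computation in which the inclusion is governed by the Faà di Bruno formula relating ordinary and logarithmic derivatives. Since both filtrations are defined chart by chart, I would fix a point of $D$ and choose local coordinates $(z_1, \ldots, z_n)$ on $\overline{X}$ with $D = \{z_1 = 0\}$. In such a chart $\Omega_{\overline{X}}$ is freely generated by $dz_1, \ldots, dz_n$, while $\Omega_{\overline{X}}(\log D)$ is freely generated by $\tfrac{dz_1}{z_1}, dz_2, \ldots, dz_n$; correspondingly, an ordinary jet differential is a polynomial in the symbols $f_i^{(j)}$, and a logarithmic one a polynomial in the symbols $(\log f_1)^{(j)}$ (first direction) together with $f_i^{(j)}$ for $i \ge 2$. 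The inclusion of \eqref{suiteexacte} is a morphism of graded $\mathcal O_{\overline{X}}$-algebras, given concretely by the rewriting of an ordinary jet differential in the logarithmic variables, so the whole question is to control how the filtration multidegree behaves under this rewriting on the algebra generators.

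The key input is the Faà di Bruno identity
$$
f_1^{(j)} = f_1 \cdot B_j\!\left((\log f_1)', (\log f_1)'', \ldots, (\log f_1)^{(j)}\right),
$$
where $B_j$ denotes the $j$-th complete Bell polynomial. Read on the jet bundle, this says that the symbol $f_1^{(j)}$ is sent to $z_1 \cdot B_j$ evaluated at the logarithmic symbols, the coefficient $z_1 \in \mathcal O_{\overline{X}}$ being irrelevant to the filtration; in the directions $z_2, \ldots, z_n$ the symbols are unchanged. I would then use the fundamental weight-homogeneity of $B_j$: every monomial $\prod_i \big((\log f_1)^{(i)}\big)^{m_i}$ occurring in $B_j$ satisfies $\sum_i i\, m_i = j$ (which also follows from homogeneity under the $\mathbb C^\ast$-action reparametrising the source of the curve). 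Hence such a monomial has filtration multidegree $(m_1, \ldots, m_j, 0, \ldots, 0)$, and the constraint $\sum_i i\, m_i = j$ forces this to be $\le e_j$ in the lexicographic order, with equality only for the top term $(\log f_1)^{(j)}$. Thus the symbol $f_1^{(j)}$ lands in $F^{e_j}$ of the logarithmic jet algebra, exactly as do the symbols $f_i^{(j)}$ for $i \ge 2$.

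To finish, I would invoke multiplicativity of the filtration: since the lexicographic order on $\mathbb N^k$ is compatible with addition, one has $F^a \cdot F^b \subseteq F^{a+b}$, so a monomial of ordinary filtration multidegree $(|I_1|, \ldots, |I_k|)$, being a product of symbols each of which is sent into the appropriate $F^{e_j}$, is sent into $F^{(|I_1|, \ldots, |I_k|)}$ of the logarithmic jet algebra. This is precisely the assertion that the inclusion preserves the two filtrations. The one genuinely delicate point is the weight-homogeneity of the change of variables encoded by the Bell polynomials; once this is granted, the rest is formal. As a byproduct, the computation identifies the induced morphism on $\mathrm{Gr}_F$ in each multidegree $(l_1, \ldots, l_k)$ with $\bigotimes_{j} \mathrm{Sym}^{l_j}(\iota)$, where $\iota \colon \Omega_{\overline{X}} \hookrightarrow \Omega_{\overline{X}}(\log D)$ is the natural inclusion, which is exactly the description of the graded pieces needed to study $\mathcal Q_{k,m}$ in the sequel.
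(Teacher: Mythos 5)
Your proof is correct and follows essentially the same route as the paper's: both reduce to the local statement that rewriting a monomial $\prod_{i,l}(f_i^{(l)})^{a_{i,l}}$ in the logarithmic generators preserves the filtration multidegree. The paper disposes of this in one line by asserting that the exponents are preserved, whereas you make explicit the point it leaves implicit --- the weight-homogeneity of the Bell polynomials in the Fa\`a di Bruno expansion of $f_1^{(j)}$, which is exactly what guarantees that $f_1^{(j)}$ lands in $F^{e_j}$ of the logarithmic jet algebra.
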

\begin{proof}
We only need to check this locally: this inclusion sends an jet differential equation of the form $\prod_{i,l} (f_i^{(l)})^{a_{i,l}}$ on $\prod_{i \neq n,l} (f_i^{(l)})^{a_{i,l}} \cdot \prod_{l} z_n^{a_{i,l}} (\frac{f_n^{(l)}}{z_n})^{a_{i,l}}$. The exponents of the different $f_i^{(l)}$ are then preserved by the inclusion, so the natural filtrations are also preserved. 
\end{proof}

Consequently, $\mathcal Q_{k,m}$ admits a induced filtration $F_1$, whose graded terms can be written as a quotient of the corresponding graded terms in $E_{k,m}^{GG} \Omega_{\overline{X}}$ and $E_{k,m}^{GG} \Omega_{\overline{X}} (\log D)$ :
\begin{align} \label{graduefil1} 
& \mathrm{Gr}^{F_1}_{\bullet}  \left(\mathcal Q_{k,m} \right) =  \\
	 \nonumber &  \bigoplus_{l_1 + 2 l_2 + ... + k l_k = m} \; \quotient{\left[ S^{l_1} \Omega_{\overline{X}} (\log D) \otimes ... \otimes S^{l_k} \Omega_{\overline{X}} (\log D)\right]}{\mathrm{Im} \left[ S^{l_1} \Omega_{\overline{X}} \otimes ... \otimes S^{l_k} \Omega_{\overline{X}} \right]}.
\end{align}

We will now produce successive refinements of the filtration $F_1$, until we obtain a filtration whose graded terms are all locally free $\mathcal O_{D}$-modules. We can already simplify the quotient appearing in \eqref{graduefil1}, using the following elementary result.

\begin{lem} Let $\mathcal E_{1}, ... \mathcal E_{l}$ be $\mathcal O_{\overline{X}}$-modules. For any $i$, we consider a sub-module $\mathcal E'_{i} \hookrightarrow \mathcal E_{i}$. Then the quotient $\quotient{\mathcal E_{1} \otimes ...\otimes  \mathcal E_{l}}{\mathrm{Im} \left( \mathcal E'_{1} \otimes ... \otimes \mathcal E'_{l} \right)}$ admits a filtration whose  $i$-th graded term can be identified with
$$
\mathcal E'_{1} \otimes ... \otimes \mathcal E'_{i-1} \otimes \left( \quotient{\mathcal E_{i}}{\mathcal E'_{i}} \right) \otimes \mathcal E_{i+1} \otimes ... \otimes \mathcal E_{l}.
$$
\end{lem}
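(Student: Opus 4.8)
The plan is to build a decreasing chain of submodules of the full tensor product $\mathcal E_1 \otimes \cdots \otimes \mathcal E_l$ obtained by replacing the factors $\mathcal E_j$ with the submodules $\mathcal E'_j$ one at a time, and then to pass to the quotient. Concretely, for $0 \leq i \leq l$ I would set
$$
W_i = \mathrm{Im} \left( \mathcal E'_1 \otimes \cdots \otimes \mathcal E'_i \otimes \mathcal E_{i+1} \otimes \cdots \otimes \mathcal E_l \longrightarrow \mathcal E_1 \otimes \cdots \otimes \mathcal E_l \right),
$$
the map being induced by the inclusions $\mathcal E'_j \hookrightarrow \mathcal E_j$. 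Since replacing one more $\mathcal E_j$ by its submodule $\mathcal E'_j$ can only shrink the image, these satisfy $W_l \subseteq W_{l-1} \subseteq \cdots \subseteq W_0 = \mathcal E_1 \otimes \cdots \otimes \mathcal E_l$, and $W_l = \mathrm{Im}(\mathcal E'_1 \otimes \cdots \otimes \mathcal E'_l)$ is exactly the submodule we divide out. Hence the images $W_i / W_l$ form a filtration of the quotient $Q = (\mathcal E_1 \otimes \cdots \otimes \mathcal E_l)/\mathrm{Im}(\mathcal E'_1 \otimes \cdots \otimes \mathcal E'_l)$, whose successive graded terms are the $W_{i-1}/W_i$.

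It then remains to identify each $W_{i-1}/W_i$. Writing $A = \mathcal E'_1 \otimes \cdots \otimes \mathcal E'_{i-1}$ and $B = \mathcal E_{i+1} \otimes \cdots \otimes \mathcal E_l$, there is a canonical surjection $\phi : A \otimes \mathcal E_i \otimes B \twoheadrightarrow W_{i-1}$, and by its very definition $W_i$ is the $\phi$-image of the image of $A \otimes \mathcal E'_i \otimes B$ in $A \otimes \mathcal E_i \otimes B$. Composing $\phi$ with the projection $W_{i-1} \to W_{i-1}/W_i$ therefore annihilates that image, so by right exactness of the tensor product the map factors through $\mathrm{coker}(A \otimes \mathcal E'_i \otimes B \to A \otimes \mathcal E_i \otimes B) = A \otimes (\mathcal E_i/\mathcal E'_i) \otimes B$, which is precisely $\mathcal E'_1 \otimes \cdots \otimes \mathcal E'_{i-1} \otimes (\mathcal E_i/\mathcal E'_i) \otimes \mathcal E_{i+1} \otimes \cdots \otimes \mathcal E_l$; this yields a surjection onto $W_{i-1}/W_i$.

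The only point that needs care — and the one I would flag as the crux — is that this last map is also injective, so that the graded term is genuinely isomorphic to the stated module rather than merely a quotient of it. This is where one might fear a flatness hypothesis is required, but none is: if $x \in A \otimes \mathcal E_i \otimes B$ satisfies $\phi(x) \in W_i$, then, since $W_i$ is by construction the $\phi$-image of $A \otimes \mathcal E'_i \otimes B$, one may write $\phi(x) = \phi(y)$ with $y$ coming from $A \otimes \mathcal E'_i \otimes B$, whence $x - y \in \ker \phi$ and $x \in \mathrm{Im}(A \otimes \mathcal E'_i \otimes B) + \ker \phi$. This is exactly the injectivity of the factored map, completing the identification. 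I would present the argument directly in this telescoping form rather than by induction on $l$, since the chain $\{W_i\}$ exhibits all $l$ graded terms simultaneously and makes the compatibility with the $\mathcal O_{\overline{X}}$-module structure transparent.
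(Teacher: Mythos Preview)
Your filtration $\{W_i\}$ is precisely the one the paper writes down, so the overall approach matches; you simply supply more detail than the paper's one-sentence proof. However, your injectivity step contains a genuine gap. You correctly show that if $\phi(x) \in W_i$ then $x \in \mathrm{Im}(A \otimes \mathcal E'_i \otimes B \to A \otimes \mathcal E_i \otimes B) + \ker\phi$, but injectivity of the factored map $A \otimes (\mathcal E_i/\mathcal E'_i) \otimes B \to W_{i-1}/W_i$ requires $x$ to lie in $\mathrm{Im}(A \otimes \mathcal E'_i \otimes B \to A \otimes \mathcal E_i \otimes B)$ itself. The extra $\ker\phi$ term is harmless only when $\phi$ is already injective, and for arbitrary modules this fails. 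A concrete counterexample: take $l=2$, $\mathcal E_1 = \mathcal O_{\overline X}$, $\mathcal E'_1 = \mathcal I$ the ideal sheaf of a closed point, $\mathcal E_2 = \mathcal O_{\overline X}/\mathcal I$, $\mathcal E'_2 = 0$; then $W_1 = W_2 = 0$, yet the putative second graded piece $\mathcal E'_1 \otimes (\mathcal E_2/\mathcal E'_2) \cong \mathcal I/\mathcal I^2$ is nonzero. So, contrary to your flag, some flatness-type hypothesis \emph{is} needed for the identification to be an isomorphism in full generality.

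This does not harm the paper's argument. In the intended application the $\mathcal E_i = S^{l_i}\Omega_{\overline X}(\log D)$ and $\mathcal E'_i = S^{l_i}\Omega_{\overline X}$ are locally free, so every $\phi$ is injective and your reasoning then goes through verbatim. Moreover, only the surjection $A \otimes (\mathcal E_i/\mathcal E'_i) \otimes B \twoheadrightarrow W_{i-1}/W_i$ is actually used downstream, to bound $h^0(\mathcal Q_{k,m})$ from above, and that part of your argument is correct without any hypothesis. You should either add a local-freeness assumption to the lemma or downgrade ``identified with'' to ``a quotient of''.
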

\begin{proof}
It suffices to consider the filtration induced on the quotient sheaf $\quotient{\mathcal E_{1} \otimes ... \mathcal E_{l}}{\mathrm{Im} \left( \mathcal E'_{1} \otimes ... \otimes \mathcal E'_{l} \right) }$ by the images of any of the sheaves appearing in the sequence of morphisms
$$
\mathcal E'_1 \otimes ... \otimes \mathcal E'_{l} \longrightarrow ... \longrightarrow \mathcal E'_1 \otimes ... \otimes \mathcal E'_{i-1} \otimes \mathcal E_i \otimes ... \otimes \mathcal E_l \longrightarrow ... \longrightarrow \mathcal E_1 \otimes ... \otimes \mathcal E_l.
$$
\end{proof}

We deduce from this proposition and the previous one the existence of a filtration $F_2$ on $\mathcal Q_{k,m}$, whose graded module can be written
$$
\mathrm{Gr}_{\bullet}^{F_2} \left( \mathcal Q_{k,m} \right) = \bigoplus_{l_1 + 2 l_2 + ... + k l_k = m} \bigoplus_{i = 1}^{k} \; \; S^{l_1} \Omega_{\overline{X}} \otimes ... \otimes \mathcal S_{l_i} \otimes ... \otimes S^{l_k} \Omega_{\overline{X}} (\log D),
$$
where $\mathcal S_{l} = \quotient{S^{l} \Omega_{\overline{X}}(\log D) }{S^{l} \Omega_{\overline{X}}}$.

The $\mathcal O_{\overline{X}}$-modules $\mathcal S_l$ can be in turn filtered in $\mathcal O_D$-modules, using a filtration that was first introduced in \cite{campau07}. For completeness, we will describe this filtration in our special case.

\begin{prop} \label{propfiltS} For any $l \in \mathbb N$, $\mathcal S_{l}$ is endowed with a filtration, whose graded terms are $\mathcal O_D$-modules, written
$$
\mathrm{Gr}_{\bullet} (\mathcal S_l) = \bigoplus_{j = 0}^{l} \bigoplus_{s = 0}^{j} \left( N_{D/\overline{X}}^{\ast} \right)^{\otimes s} \otimes S^{l-j} \Omega_{D}.
$$
\end{prop}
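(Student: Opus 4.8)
The statement is local along the smooth divisor $D$, so the plan is to first compute $\mathcal{S}_l$ in adapted coordinates, then assemble the graded module from two canonical filtrations that will glue. First I would choose, near a point of $D$, local coordinates $(z_1, \dots, z_n)$ with $D = \{ z_n = 0\}$, so that $\Omega_{\overline{X}}(\log D)$ has the frame $dz_1, \dots, dz_{n-1}, \tfrac{dz_n}{z_n}$ while $\Omega_{\overline{X}}$ has the frame $dz_1, \dots, dz_{n-1}, dz_n = z_n \cdot \tfrac{dz_n}{z_n}$. Writing $\theta_i = dz_i$ ($i < n$) and $\omega = \tfrac{dz_n}{z_n}$, the sheaf $S^l \Omega_{\overline{X}}(\log D)$ is free on the monomials $\theta^\alpha \omega^b$ with $|\alpha| + b = l$, and the key observation is that a monomial of $S^l \Omega_{\overline{X}}$ carrying $b$ differentials $dz_n$ maps to $z_n^b\, \theta^\alpha \omega^b$. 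Hence the image of $S^l \Omega_{\overline{X}}$ is the submodule generated by the $z_n^b\, \theta^\alpha\omega^b$, and locally
\[
\mathcal{S}_l \cong \bigoplus_{|\alpha| + b = l} \bigl( \mathcal{O}_{\overline{X}} / \mathcal{I}_D^{\,b} \bigr)\, \theta^\alpha \omega^b ,
\]
the summands of logarithmic degree $b = 0$ being zero.

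Next I would organize this into filtrations whose graded terms are $\mathcal{O}_D$-modules. The inclusion $\Omega_{\overline{X}} \hookrightarrow \Omega_{\overline{X}}(\log D)$ induces on $S^l \Omega_{\overline{X}}(\log D)$ the decreasing filtration $F^p = \mathrm{Im}\bigl( S^p \Omega_{\overline{X}} \otimes S^{l-p}\Omega_{\overline{X}}(\log D)\bigr)$, which is intrinsic (it does not depend on the coordinates) and whose bottom step is exactly the image of $S^l \Omega_{\overline{X}}$; passing to the quotient it yields a filtration of $\mathcal{S}_l$ that isolates in each graded piece a fixed logarithmic order $j$, i.e. a fixed thickening $\mathcal{O}_{\overline{X}}/\mathcal{I}_D^{\,j}$ tensored with a tangential symmetric power. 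I would then refine this by the $\mathcal{I}_D$-adic (conormal) filtration of each $\mathcal{O}_{\overline{X}}/\mathcal{I}_D^{\,j}$, whose graded terms are the line bundles $\mathcal{I}_D^{\,s}/\mathcal{I}_D^{\,s+1} \cong (N_{D/\overline{X}}^{\ast})^{\otimes s}$. Finally, the residue sequence $0 \to \Omega_D \to \Omega_{\overline{X}}(\log D)|_D \to \mathcal{O}_D \to 0$ and its symmetric powers identify the tangential factor of logarithmic order $j$ with $S^{l-j}\Omega_D$. Composing these refinements produces a filtration of $\mathcal{S}_l$ all of whose graded terms are locally free $\mathcal{O}_D$-modules of the form $(N_{D/\overline{X}}^{\ast})^{\otimes s} \otimes S^{l-j}\Omega_D$, which is the asserted graded module.

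The verifications that remain are the two gluing statements: that the local direct-sum decomposition by logarithmic degree is the associated graded of the \emph{intrinsic} filtration $F^\bullet$, and that the conormal and residue filtrations are canonical enough to patch over $D$. I expect the main obstacle to be precisely this globalization. The residue extension $0 \to \Omega_D \to \Omega_{\overline{X}}(\log D)|_D \to \mathcal{O}_D \to 0$ is in general non-split, so the tangential symmetric powers $S^{l-j}\Omega_D$ and the residual part do not split off directly; one must therefore order the successive refinements carefully (first by logarithmic order, then by conormal order, and only then by the residue weight) so that each graded quotient is a genuine tensor product of $(N_{D/\overline{X}}^\ast)^{\otimes s}$ with $S^{l-j}\Omega_D$ rather than a nontrivial extension. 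Checking that each step is independent of the chosen adapted coordinates — equivalently, that the filtrations are defined by intrinsic subsheaves — is the crux that makes the local computation glue to the global statement.
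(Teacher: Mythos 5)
Your local computation of $\mathcal S_l$ is correct, and in fact it matches what the paper's own proof produces, namely $\bigoplus_{j}\bigoplus_{s=0}^{j-1}(N^\ast_{D/\overline X})^{\otimes s}\otimes S^{l-j}\Omega_D$ rather than the stated $\bigoplus_{s=0}^{j}$ (the displayed formula harmlessly over-counts, as one already sees for $l=1$, where $\mathcal S_1\cong\mathcal O_D$ by the residue). The genuine gap is in the globalization, which you rightly identify as the crux but then defer rather than resolve --- and the mechanism you propose for it does not do what you claim. The filtration $F^p=\mathrm{Im}\bigl(S^p\Omega_{\overline X}\otimes S^{l-p}\Omega_{\overline X}(\log D)\bigr)$ is indeed intrinsic, but its graded pieces on $\mathcal S_l$ are \emph{not} ``a fixed thickening $\mathcal O_{\overline X}/\mathcal I_D^{\,j}$ tensored with a tangential symmetric power'': in your notation ($\theta_i=dz_i$, $\omega=dz_n/z_n$) one has $F^p=\bigoplus_{|\alpha|+b=l}\mathcal I_D^{\max(0,\,b-l+p)}\,\theta^\alpha\omega^b$, hence $\mathrm{Gr}_F^p(\mathcal S_l)\cong\bigoplus_{s=0}^{p}(N^\ast_{D/\overline X})^{\otimes s}\otimes S^{p-s}\Omega_D$, which mixes all conormal degrees with varying symmetric powers at once. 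The filtration that does isolate a fixed logarithmic order $j$, with graded piece $(\mathcal O_{\overline X}/\mathcal I_D^{\,j})\otimes S^{l-j}\Omega_D$, is the filtration by the degree in $dz_n/z_n$; and that filtration is \emph{not} preserved by a general adapted coordinate change $w_n=u\,z_n$, since each $dw_i$ then picks up a multiple of $z_n\,\frac{dz_n}{z_n}$. So it is not intrinsic on an arbitrary smooth pair, and your plan of ``checking coordinate independence'' of that filtration cannot succeed as stated.

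The paper resolves precisely this point by using the specific geometry of the toroidal compactification, which your argument never invokes: each boundary component has a tubular neighborhood which is the quotient of an open subset of $\mathbb C^{n-1}\times\mathbb C$ by a lattice $\Lambda$ acting by $\frac{dz_n}{z_n}\mapsto\frac{dz_n}{z_n}+\sum_i\gamma_i(a)\,dz_i$ and fixing the $dz_i$. In these \emph{preferred} coordinates the $dz_n/z_n$-degree filtration is $\Lambda$-invariant, hence globally defined on the whole neighborhood, and its graded pieces are literally the trivial bundles $S^{l-j}\Omega_D$ (the boundary being a union of abelian varieties); only the elementary $\mathcal I_D$-adic refinement of $\mathcal O_{jD}$ then remains. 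If you insist on arguing for a general pair $(X,D)$, you must carry out the successive intrinsic refinements (weight, conormal, residue) in full and verify that the total graded module comes out as claimed even though the intermediate graded pieces are not the ones you named; this is essentially the Campana--P\u{a}un filtration that the paper cites, and it is exactly the part your proposal leaves open.
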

\begin{proof}
According to \cite{mok12}, each boundary component $T_b$ admits a tubular neighborhood $U$, quotient of its universal cover $\widehat{U} \subset \mathbb C^{n-1} \times \mathbb C$ by a lattice $\Lambda \subset \mathbb C^{n-1}$. The component $T_b$ can be identified to the quotient of $\mathbb C^{n-1} \times \left\{ 0 \right\}$ by $\Lambda$. 
Let $D^\circ = \mathbb C^{n-1} \times \left\{ 0 \right\} \subset \widehat{U}$. The elements $a \in \Lambda$ act on $\Omega_{\widehat{U}} (\log D^0)$ in the following way:
$$
\left\{ \begin{array}{c}
	a \cdot \frac{d z_n}{z_n} = \frac{d z_n}{z_n} + \sum_{i=1}^{n-1} \gamma_i(a) dz_i ; \\
	a \cdot d z_i = d z_i \; \text{if} \; 1 \leq i \leq n - 1,
\end{array}
\right.
$$
where the $\gamma_i : \mathbb C^{n-1} \longrightarrow \mathbb C$ are $\mathbb R$-linear maps.
The natural filtration by the degree of $\frac{dz_n}{z_n}$ in $ S^l \Omega_{\widehat{U}} (\log D^\circ)$ is consequently preserved by $\Lambda$, and induces a filtration $G_l$ on $S^l \Omega_{U}(\log D))$ whose graded terms are globally trivial and can be written 
$$
\mathrm Gr^{G_l}_{j} (S^l \Omega_{U} (\log D)) = \left[ \left( \frac{dz_n}{z_n} \right)^j \right] \cdot S^{l - j} \Omega_D.
$$

This expression in local coordinates shows that the induced filtration by $G_l$ on $S^l \Omega_{U}$ admits as general graded term
$$
\mathrm{Gr}_j^{G_l \cap S_l \Omega_U} (S^l \Omega_U) = \mathcal I_{j D} \otimes_{\mathcal O_U} \left[ \left( \frac{dz_n}{z_n} \right)^j \right] \cdot S^{l - j} \Omega_D,
$$
where $\mathcal I_{jD}$ is the sheaf of ideals of the divisor $j D$. Consequently, $G_l$ induces a new filtration on the quotient $\quotient{S^l \Omega_{U}(\log D)}{S^l \Omega_{U}}$, whose graded terms are
$$
\mathrm{Gr}_{\bullet} (\mathcal S_l) = \mathcal O_{jD} \otimes_{\mathcal O_U} S^{l - j} \Omega_{D}.
$$

To obtain the proposition, it suffices to refine this last filtration, remarking that $\mathcal O_{jD} = \quotient{\mathcal O_{\overline{X}} }{ \mathcal I_{j D}}$ is itself filtered by 
$$
0 \subset \; \quotient{\mathcal I_{(j-1) D}}{\mathcal I_{jD}} \; \subset ... \subset \; \quotient{\mathcal I_{l D}}{\mathcal I_{jD}} \; \subset ... \subset  \; \mathcal O_{jD},
$$
whose successive quotients can be identified to $\quotient{ \mathcal I_{l D}}{\mathcal I_{(l+1) D}} \simeq \left( N_{D/ \overline{X}}^\ast \right)^{\otimes l}$.
\end{proof}

We can consequently refine the filtration $F_2$, to obtain a new one $F_3$, whose graded module is
\begin{align*}
\mathrm{Gr}^{F_3}_{\bullet} & (\mathcal Q_{k,m})   = \\ 
& \bigoplus_{i = 1}^k \; \bigoplus_{l_1 + 2 l_2 + ... + k l_k = m} \;  \bigoplus_{j_i = 0}^{l_i} \bigoplus_{s_i = 0}^{j_i} \; \;   
  \left( S^{l_1} \Omega_{\overline{X}} \otimes ... \otimes  S^{l_{i-1}} \Omega_{\overline{X}} \right. \\ 
& \otimes  \left[ \left( N^\ast_{D/\overline{X}} \right)^{\otimes s_i} \otimes S^{l_i - j_i} \Omega_{D} \right]  \left. \otimes S^{l_{i+1}} \Omega_{\overline{X}} (\log D) \otimes ... \otimes S^{l_k} \Omega_{\overline{X}} (\log D) \right).
\end{align*} 

Each one of the terms of this direct sum can be seen as an $\mathcal O_D$-module. Besides, we have seen in the proof of Proposition \ref{propfiltS} that $S^l \Omega_{\overline{X}} (\log D) |_{D}$ admits a natural filtration whose graded terms are trivial:
$$
\mathrm{Gr}_\bullet \left( S^l \Omega_{\overline{X}} (\log D) \right) = \bigoplus_{j=0}^l S^j \Omega_D.
$$
On the other hand, since each boundary component admit a tubular neighborhood, we have
$$
\Omega_{\overline{X}}|_D = N^\ast_{D/\overline{X}} \oplus \Omega_D,
$$
so $S^l \Omega_{\overline{X}} |_D \simeq \bigoplus_{j=0}^l \left( N_{D/\overline{X}}^\ast \right)^j \otimes S^{l-j} \Omega_D$. We can consequently refine a last time the filtration on $\mathcal Q_{k, m}$, to obtain the following proposition.

\begin{prop} For any $k, m \in \mathbb N^\ast$, there exists a filtration $F_{\bullet} \mathcal Q_{k,m}$, whose graded module is an $\mathcal O_D$-module written 
\begin{align} \label{termegradfinal} 
\mathrm{Gr}^F_{\bullet} & \left( \mathcal Q_{k,m} \right) = \\
 \nonumber &  \bigoplus_{i = 1}^k \; \bigoplus_{l_1 + 2 l_2 + ... + k l_k = m} \; \bigoplus_{j_1= 0}^{l_i} ... \bigoplus_{j_k = 0}^{l_k} \bigoplus_{s_i = 0}^{j_i} \; \left( N_{D/\overline{X}}^\ast \right)^{\otimes (j_1 + ... + j_{i-1} + s_i)} \\ 
	 & \; \; \otimes S^{l_1 - j_1} \Omega_D \otimes ... \otimes S^{l_k - j_k} \Omega_D.
\end{align} 
where all tensor products are taken over $\mathcal O_D$.
\end{prop}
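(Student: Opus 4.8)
The plan is to refine the filtration $F_3$ one last time by restricting to the boundary every factor that is not already an $\mathcal O_D$-module, and then decomposing these restrictions according to the two local computations already carried out in the proof of Proposition \ref{propfiltS}. First I would observe that each direct summand of $\mathrm{Gr}^{F_3}_\bullet(\mathcal Q_{k,m})$ is in fact supported on $D$: its middle factor $(N_{D/\overline{X}}^\ast)^{\otimes s_i}\otimes S^{l_i-j_i}\Omega_D$ is an $\mathcal O_D$-module, and tensoring over $\mathcal O_{\overline{X}}$ with an $\mathcal O_D$-module factors through restriction to $D$. Consequently, in each summand the factors $S^{l_p}\Omega_{\overline{X}}$ (for $p<i$) and $S^{l_p}\Omega_{\overline{X}}(\log D)$ (for $p>i$) may be replaced by their restrictions $S^{l_p}\Omega_{\overline{X}}|_D$ and $S^{l_p}\Omega_{\overline{X}}(\log D)|_D$, and all tensor products may be taken over $\mathcal O_D$.

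Next I would decompose these restricted factors. For $p<i$, the splitting $\Omega_{\overline{X}}|_D = N_{D/\overline{X}}^\ast \oplus \Omega_D$ coming from the tubular neighbourhood of each boundary component gives an honest direct-sum decomposition $S^{l_p}\Omega_{\overline{X}}|_D = \bigoplus_{j_p=0}^{l_p}(N_{D/\overline{X}}^\ast)^{\otimes j_p}\otimes S^{l_p-j_p}\Omega_D$. For $p>i$, the filtration by the degree in $dz_n/z_n$ exhibited inside the proof of Proposition \ref{propfiltS} endows $S^{l_p}\Omega_{\overline{X}}(\log D)|_D$ with a filtration whose graded terms are $S^{l_p-j_p}\Omega_D$, $0\le j_p\le l_p$, carrying \emph{no} power of $N_{D/\overline{X}}^\ast$; this asymmetry between $p<i$ and $p>i$ is precisely what produces the normal exponent $j_1+\dots+j_{i-1}+s_i$ in the final formula. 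The middle factor is already locally free over $\mathcal O_D$.

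Finally I would assemble these data into a single filtration $F$ refining $F_3$, by forming the tensor product (over $\mathcal O_D$) of the above filtrations and direct sums. The structural fact that makes this work is that the associated graded of a tensor product of filtered $\mathcal O_D$-modules is the tensor product of their associated gradeds; collecting indices then gives, in each summand, a total normal power $(N_{D/\overline{X}}^\ast)^{\otimes(j_1+\dots+j_{i-1}+s_i)}$ with tangential part $\bigotimes_p S^{l_p-j_p}\Omega_D$, which is exactly the graded module \eqref{termegradfinal}.

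The only genuine point needing care is the legitimacy of this last identity: in general the graded of a tensor product involves $\mathrm{Tor}$ corrections, so I would justify its vanishing by noting that every graded piece here — a tensor product of powers of $N_{D/\overline{X}}^\ast$ and of symmetric powers of $\Omega_D$ — is locally free, $D$ being smooth (a disjoint union of abelian varieties). Thus each filtration step is flat, tensoring stays exact along the filtration, and the computation reduces to the combinatorial bookkeeping of the indices $i$, the $(l_p)$ with $\sum_p p\,l_p=m$, the $(j_p)$ and $s_i$. I expect this flatness verification and the index-collecting to be the main, though essentially routine, obstacle.
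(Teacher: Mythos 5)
Your proposal is correct and follows essentially the same route as the paper: starting from $F_3$, restricting the remaining $\mathcal O_{\overline{X}}$-factors to $D$, splitting $S^{l_p}\Omega_{\overline{X}}|_D$ via the tubular-neighbourhood decomposition $\Omega_{\overline{X}}|_D = N_{D/\overline{X}}^\ast \oplus \Omega_D$, and filtering $S^{l_p}\Omega_{\overline{X}}(\log D)|_D$ by the $dz_n/z_n$-degree with graded pieces $S^{l_p-j_p}\Omega_D$. Your explicit justification that the associated graded of the tensor product is the tensor product of the gradeds (via local freeness of the graded pieces) is a point the paper leaves implicit, but it is the same argument.
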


\subsection{Upper bound on the graded terms of the filtration}

We want to obtain an asymptotic upper bound on $h^0 \left(D, \mathrm{Gr}^F_{\bullet} \left( \mathcal Q_{k,m} \right) \right)$ when $m \longrightarrow 0$. We start by changing the indexing of the direct sums, so that we sum over $r = j_1 + 2 j_2 + ... + k j_k$. If we proceed to the substitution $l_i \leftarrow l_i - j_i$, we find:
\begin{align*}
\mathrm{Gr}^F_{\bullet} \left( \mathcal Q_{k,m} \right) = & \bigoplus_{r = 0}^m \left(\, \left[ \bigoplus_{j_1 + 2 j_2 + ... + k j_k = r} \bigoplus_{i = 1}^k \bigoplus_{s_i = 0}^{j_i} \left( N_{D / \overline{X}}^\ast \right)^{\otimes (j_1 + ... + j_{i-1} + s_i)} \right] \right. \\
 & \left. \otimes \left[ \bigoplus_{l_1 + 2 l_2 + ... + k l_k = m - r} S^{l_1} \Omega_D \otimes ... \otimes S^{l_k} \Omega_D \right] \, \right)
\end{align*}

The term on the right is a trivial vector bundle, because $D$ is made of disjoint abelian varieties. Consequently, we have
\begin{align}  \nonumber
h^0 & \left(D, \mathrm{Gr}^F_{\bullet} \left( \mathcal Q_{k,m} \right) \right)  = \\ \nonumber
	&  \sum_{r = 0}^m \; \left( \, \left[ \sum_{j_1 + 2 j_2 + ... + k j_k = r} \sum_{i = 1}^k \sum_{s_i = 0}^{j_i} h^0 \left(D,  \left (N_{D / \overline{X}}^\ast \right)^{\otimes (j_1 + ... + j_{i-1} + s_i)} \right) \right] \right. \\
	& \left. \cdot \mathrm{rk} \left[ \bigoplus_{l_1 + 2 l_2 + ... + k l_k = m - r} S^{l_1} \Omega_D \otimes ... \otimes S^{l_k} \Omega_D \right] \, \right). \label{majorationh0}
\end{align}

For a fixed $(j_1, ..., j_k)$, we now compute 
$$
\sum_{i=1}^k \sum_{s_i = 0}^{j_i} h^0 \left(D,  \left (N_{D / \overline{X}}^\ast \right)^{\otimes (j_1 + ... + j_{i-1} + s_i)} \right). 
$$

Recall that the line bundle $N_{D/ \overline{X}}^\ast$ is ample (cf. \cite{mok12}). Consequently, since the boundary is made of abelian varieties, Kodaira vanishing theorem yields 
$$
\chi(D, (N^\ast_{D / \overline{X}})^{\otimes(j_1 + j_2 + ... + s_i)}) = h^0(D, (N^\ast_{D / \overline{X}})^{\otimes(j_1 + j_2 + ... + s_i)}),
$$
as soon as $j_1 + j_2 + ... + s_i \neq 0$.

Besides, still because the boundary is a union of abelian varieties, Hirzebruch-Riemann-Roch theorem gives 
$$
\chi( D, (N_{D/\overline{X}} ^\ast)^{\otimes (j_1 + j_2 + ... j_{i-1} + s_i)}) = \frac{1}{(n-1)!} (j_1 + ... + j_{i-1} + s_i)^{n-1} \; [ -(-D)^n].
$$

We can sum this last term on $s_i$, to find
\begin{align*}
\sum_{s_i = 0}^{j_i} & (j_1 + ... + j_{i -1}  + s_i )^{n-1} \\
 & = \sum_{l_1 + ... + l_i = n -1} \binom{n-1}{l_1,\; ... \,,\; l_i} j_1^{l_1} ... j_{i-1}^{l_{i-1}} \sum_{s_i = 0}^{j_i} s_i^{l_i} \\
	& =   \sum_{l_1 + ... + l_i = n-1} \binom{n-1}{l_1,\; ... \,,\; l_i} j_1^{l_1} ... j_{i-1}^{l_{i-1}} [\frac{j_i^{l_i + 1}}{l_i + 1} + O(j_i^{l_i -1}) ] \\
	& = \sum_{l_1 + ... + (l_i + 1) = n} \frac{1}{n} \binom{n}{l_1 \; ... \,,  \; l_i + 1} j_1^{l_1} ... j_i^{l_i + 1} + O(  \sum_{l_1 + ... + l_i = n - 1} \binom{n-1}{l_1,\; ... \,,\; l_i} j_1^{l_1} ... j_{i}^{l_{i}} ) \\
	& = \frac{1}{n} \left[ (j_1 + ... + j_i)^n - (j_1 + ... + j_{i-1})^n \right] + O( (j_1 + ... + j_i)^{n-1}),
\end{align*}
where we use the multinomial formula at the second and fourth lines. 

If we sum over $i$, and using the fact that for a fixed $i$, there is only one term for which $j_1 + ... + j_{i-1} + s_i = 0$, we finally find
\begin{align} \label{sommepart}
\sum_{i=1}^k \sum_{s_i = 0}^{j_i} h^0 \left(D,  \left (N_{D / \overline{X}}^\ast \right)^{\otimes (j_1 + ... + j_{i-1} + s_i)} \right) = & \frac{1}{n!} (j_1 + ... + j_n)^n  \frac{[- (-D)^n]}{n!} \\
\nonumber	&  + O(\sum_{i} (j_1 + ... + j_i)^{n-1}).
\end{align}

\subsection{Final asymptotic estimate over $h^0(\mathcal Q_{k,m})$ as $m \longrightarrow + \infty$.}

Applying Proposition \ref{propcombmajorationapplication}, we can sum \eqref{sommepart} over the $j_1, ..., j_k$ such that $j_1 + 2 j_2 +... + k j_k = r$, to find
\begin{align*}
\sum_{j_1 + 2 j_2 + ... + k j_k = r} \sum_{i = 1}^k \sum_{s_i = 0}^{j_i} & h^0 \left(D,  \left (N_{D / \overline{X}}^\ast \right)^{\otimes (j_1 + ... + j_{i-1} + s_i)} \right) \\
	 & \leq \frac{r^{n+k-1}}{(n + k -1)} \left[ \frac{1}{k!} \sum_{1 \leq i_1 \leq ... \leq i_n \leq k} \frac{1}{i_1 ... i_n} \right] [-(-D)^n] + O(r^{n+k-2}). 
\end{align*}
Moreover, according to Proposition \ref{propestimeepoint}, we have 
$$
\mathrm{rk} \left( \bigoplus_{l_1 + 2 l_2 + ... + k l_k = r} S^{l_1} \Omega_D \otimes ... \otimes S^{l_k} \Omega_D \right) = \frac{1}{(k!)^{n-1}} \frac{m^{(n-1)k -1}}{((n-1)k -1)!} + O(m^{nk -2}).
$$

If we put these two asymptotic expressions in \eqref{majorationh0}, we obtain the following final estimate on $h^0(\mathcal Q_{k,m})$, when $m \longrightarrow + \infty$ :
\begin{align} \nonumber 
h^0 & (\mathcal Q_{k,m})  \leq h^0( \mathrm{Gr}_{\bullet}^F (\mathcal Q_{k,m})) \\ \nonumber
	  & \leq \sum_{r = 0}^m \left[\frac{r^{n+k-1}}{(n + k -1)} \left[ \frac{1}{k!} \sum_{1 \leq i_1 \leq ... \leq i_n \leq k} \frac{1}{i_1 ... i_n} \right] [-(-D)^n] \right] \cdot \left[\frac{1}{(k!)^{n-1}} \frac{(m-r)^{(n-1)k -1}}{((n-1)k -1)} \right] \\ \nonumber 
	& \; \; + O( m^{n + nk - 2}) \\  
	& = \frac{[-(-D)^n]}{(k!)^n} \left[\sum_{1 \leq i_1 \leq ... \leq i_n \leq k} \frac{1}{i_1 ... i_k}\right] \frac{m^{n + nk -1}}{(n + nk - 1)!} + O(m^{n + nk - 2}). \label{estimeebord}
\end{align}

\subsection{Uniform lower bound in $k$ on $\mathrm{vol} (E^{GG}_{k,m} \Omega_{\overline{X}})$}

Combining \eqref{minorationstdlog} with \eqref{estimeevolcomb} and \eqref{estimeebord}, we finally obtain the lower bound \eqref{minorationfinale} on $\mathrm{vol}(E_{k, \bullet}^{GG} \Omega_{\overline{X}})$, which proves Theorem \ref{thmvol}.
The expression \eqref{minorationfinale} being valid for any $k$, we can use the results of \cite{baktsi15} to determine an order $k$ after which the algebra  $E^{GG}_{k,\bullet} \Omega_{\overline{X}}$ has maximal growth. 

For example, it is not hard to obtain an asymptotic expansion of \eqref{minorationfinale}, with leading coefficient
$$
\frac{1}{n! (k!)^n} (\log k)^n \left( (K_{\overline{X}} + D)^n + (- D)^n \right) = \frac{1}{n! (k!)^n} (\log k)^k (K_{\overline{X}})^n.
$$

When $K_{\overline{X}}$ is nef and big, we get back the asymptotic lower bound of \cite{dem11}.
$$
\mathrm{vol} (E^{GG}_{k,m} \Omega_{\overline{X}}) \geq \frac{(\log k)^n}{n! (k!)^n} \left( \mathrm{vol} (K_{\overline{X}}) + O((\log k)^{-1})\right).
$$

\subsection{Explicit orders $k$ to have a big $E_{k,\bullet}^{GG} \Omega_{\overline{X}}$}

In this section, we prove Corollary \ref{corolorder}. We will use \eqref{minorationfinale} to determine an effective $k$ after which $E_{k, \bullet}^{GG} \Omega_{\overline{X}}$ is big. Let us begin by determining an upper bound on $\sum_{1 \leq i_1 \leq ... \leq i_n \leq k} \frac{1}{i_1 ... i_n}$. We have
$$
\sum_{1 \leq i_1 \leq ... \leq i_n \leq k} \frac{1}{i_1 ... i_k}  = \sum_{p=1}^n \; \sum_{\overset{l_1 + ... +l_p = n}{\forall i, \; l_i >\, 0}} \; \; \sum_{1 \leq j_1 < ... < j_p \leq k} \; \; \frac{1}{j_1^{l_1} ... j_p^{l_p}},
$$
the datum of $n$ integers $1 \leq i_1 \leq ... \leq i_n \leq k$ in non-decreasing order being equivalent to the one of an integer $p$ giving the number of distinct $i_j$, of  $p$ integers $1 \leq j_1 < ... < j_p \leq k$, and of positive exponents $l_1, ..., l_p$ such that $\sum_{k} l_k = n$. Now, for any $p \geq 1$, we have:
\begin{align*}
\sum_{1 \leq j_1 < ...< j_p \leq n} \frac{1}{j_1 ... j_p}
	& \leq \frac{1}{p!} \sum_{1 \leq j_1, ..., j_p \leq k} \frac{1}{j_1 ... j_p} \\
	& = \frac{1}{p!} \left[ \sum_{j = 1}^{k} \frac{1}{j} \right]^p \\
	& \leq \frac{1}{p!} \left( \log k + \gamma + \frac{1}{2} \right)^p.
\end{align*}
Let $p \leq n-1$, and choose $l_1 ,..., l_p$ such that $l_1 + ... + l_p = n$ and $l_i \neq 0$ for any $i$. Necessarily, at least one $l_i$ is larger than 2, so
\begin{align*}
\sum_{1 \leq j_1 < ... < j_p \leq n} \frac{1}{j_1^{l_1} ... j_p^{l_p}} & \leq \sum_{1 \leq j_1 < ... < j_{p-1} \leq n} \sum_{1 \leq j_p \leq n} \frac{1}{j_1 ... j_{p-1} j_p^2} \\
	& \leq  \sum_{1 \leq j_1 < ...< j_{p-1} \leq n} \frac{1}{j_1 ... j_{p-1}} \cdot \frac{\pi^2}{6} \\
	& \leq \frac{1}{(p-1)!} \left( \log k + \gamma + \frac{1}{2} \right)^{p-1} \cdot \frac{\pi^2}{6}.
\end{align*}
Thus,
\begin{align*}
\sum_{1 \leq i_1 \leq ... \leq i_n \leq k} \frac{1}{i_1 ... i_k}  \leq \frac{1}{n!} & \left( \log k + \gamma + \frac{1}{2} \right)^n  \\
& + \frac{\pi^2}{6} \sum_{p=1}^{n-1} \; \left (\sum_{\overset{l_1 + ... +l_p = n}{\forall i, \; l_i >\, 0}} \; 1 \right) \cdot \frac{1}{(p-1)!} \left( \log k + \gamma + \frac{1}{2} \right)^{p-1}.
\end{align*}

It is easy to see that $\sum_{\overset{l_1 + ... +l_p = n}{\forall i, \; l_i >\, 0}} \; 1 = \binom{n-1}{p-1}$ (choosing the integers $l_i$ amounts to choosing $p-1$ cuts in the set $\left[|1, n \right|]$, i.e. among $n-1$ possible cuts). Consequently, we find
\begin{align*}
\sum_{1 \leq i_1 \leq ...\leq i_n \leq k} \frac{1}{i_1 \; ... \; i_n} \leq 
 	\frac{ \left(\log k + \gamma + \frac{1}{2} \right)^n}{n!} + \frac{\pi^2}{6} \sum_{p=1}^{n-1} \binom{n-1}{p-1} \frac{1}{(p-1)!} \left( \log k + \gamma + \frac{1}{2} \right)^{p-1}.
\end{align*}

We can use the following upper bound:
\begin{align*}
\sum_{p=1}^{n-1} \binom{n-1}{p-1} \frac{1}{(p-1)!} \left( \log k + \gamma + \frac{1}{2} \right)^{p-1} & =  \sum_{p=0}^{n-2} \binom{n-1}{p} \frac{1}{p!} \left( \log k + \gamma + \frac{1}{2} \right)^p \\
	& \leq \sum_{p=0}^{n-2} \binom{n-1}{p} \left( \log k + \gamma + \frac{1}{2} \right)^p \\
	& = \left( \log k + \gamma + \frac{3}{2} \right)^{n-1} - \left( \log k + \gamma + \frac{1}{2} \right)^{n-1} \\
	& \leq (n-2) \left( \log k + \gamma + \frac{3}{2} \right)^{n-2},
\end{align*}
where we used the mean value inequality in the last line.
Thus,
\begin{equation} \label{majorationestimeebord}
\sum_{1 \leq i_1 \leq ...\leq i_n \leq k} \frac{1}{i_1 \; ... \; i_n} \leq \frac{\left(\log k + \gamma + \frac{1}{2} \right)^n}{n!}  + \frac{\pi^2}{6} (n-2) \left( \log k + \gamma + \frac{3}{2} \right)^{n-2}.
\end{equation}

Inserting \eqref{minorationestimeeouvert} and \eqref{majorationestimeebord}, in \eqref{minorationfinale}, we find a lower bound of the form
\begin{align*}
\mathrm{vol} \left( E_{k, \bullet}^{GG} \Omega_{\overline{X}} \right) & \geq C_k \left[ \left(K_{\overline{X}} + D \right)^n + A(k,n) (-D)^n\right]  \\
\end{align*}
for a certain $C_k \in \mathbb R^\ast$, and
$$
A(k, n) = \left[ \frac{\log k +  \gamma + \frac{1}{2} }{\log k + \gamma} \right]^n + (n-2) n! \frac{\pi^2}{6} \frac{ \left( \log k + \gamma + \frac{3}{2} \right)^{n-2} }{\left( \log k + \gamma \right)^n }
$$

Let us first deal with the case where $n \geq 6$. According to \cite{baktsi15}, we have $\left( K_{\overline{X}} + D \right)^n + \alpha (-D)^n > 0$ for all $\alpha \in \left] 0 , \left( \frac{n+1}{2 \pi} \right)^{n} \right[$. The only thing left now is to determine an integer $k$ such that $A(k, n) < \left( \frac{n+1}{2 \pi} \right)^{n}$.

Let $j = \log k + \gamma$. We have
\begin{align*}
A(k, n) & = \left( 1 + \frac{1}{2 j} \right)^n + \frac{\pi^2}{6} \frac{(n-2) n!}{j} \left( 1 + \frac{3}{2 j} \right)^{n-2} \\
	& \leq \left( 1 + \frac{3}{2j} \right)^{n-2} \left( (1 + \frac{1}{2j})^2 + \frac{\pi^2}{6} \frac{(n-2) n!}{j} \right).
\end{align*}

We see that if $j > \frac{\frac{\pi^2}{6} (n-2)n! + 1}{ \frac{n+1}{2 \pi} - 1}$, then $A(k, n) < \left( \frac{n+1}{2 \pi} \right)^n$.

Besides, if $n \in \left[| 4, 5 \right|]$, then $(K_{\overline{X}})^n = \left( K_{\overline{X}} + D \right)^n + (-D)^n > 0$.
Consequently, since $(K_{\overline{X}})^n$ is an integer, $(K_{\overline{X}} + D)^n + (-D)^n \geq 1$, and 
$$
(K_{\overline{X}} + D)^n + \lambda (-D)^n > 0
$$
for any $\lambda \in ]0, 1 + \frac{1}{-(-D)^n} [$.
Thus, $\mathrm{vol} (E_{k, \bullet}^{GG} \Omega_{X}) > 0$ as soon as $A(k, n) < 1 + \frac{1}{(-D)^n}$.
Performing the same computations as before, we see that it is true if
$$
\log k + \gamma > -(-D)^n \left( (n-2) n ! + 1 \right).
$$

We have consequently proved Corollary \ref{corolorder}. 

{
\footnotesize
\bibliographystyle{amsalpha}
\bibliography{biblio}
}

\textsc{Beno\^it~Cadorel, Institut de Math\'ematiques de Toulouse (IMT), UMR 5219, Universit\'e Paul Sabatier, CNRS, 118 route de Narbonne, F-31062 Toulouse Cedex 9, France} \par\nopagebreak
   \textit{E-mail address}: \texttt{benoit.cadorel@math.univ-toulouse.fr}
\vspace{0pt}
\end{document}